
\documentclass[11pt]{amsart}


\usepackage[marginparwidth=75pt]{geometry}

\usepackage{amsmath, amsthm, amssymb}

\usepackage{palatino}         
\linespread{1.05}             


\usepackage{enumitem}

\usepackage[abs]{overpic}		  

\usepackage{xcolor}
\definecolor{indigo}{rgb}{0.29, 0.0, 0.51}  
\definecolor{dred}{RGB}{237, 28, 36}
\usepackage[colorlinks, urlcolor=indigo, linkcolor=indigo, citecolor=indigo]{hyperref}


\textheight 8.3in                       
\textwidth 6in \hoffset 0.25in          
\evensidemargin 0in \oddsidemargin 0in 

\theoremstyle{plain}
\newtheorem{theorem}{Theorem}
\newtheorem{corollary}[theorem]{Corollary}
\newtheorem{proposition}[theorem]{Proposition}
\newtheorem{lemma}[theorem]{Lemma}
\newtheorem{question}[theorem]{Question}
\newtheorem{conjecture}[theorem]{Conjecture}

\theoremstyle{definition}
\newtheorem{definition}[theorem]{Definition}

\theoremstyle{remark}
\newtheorem{remark}[theorem]{Remark}

\numberwithin{theorem}{section}


\newcommand{\dfn}[1]{{\em #1}}        
\newcommand{\Q}{\mathbb{Q}}           
\newcommand{\Z}{\mathbb{Z}}           

\makeatletter
\newcommand*\bigcdot{\mathpalette\bigcdot@{0.6}}
\newcommand*\bigcdot@[2]{\mathbin{\vcenter{\hbox{\scalebox{#2}{$\m@th#1\bullet$}}}}}
\makeatother




\DeclareMathOperator\rot{rot}                 




\begin{document}

\title {Complementary legs  and symplectic rational balls}

\author{John B. Etnyre}

\author{Burak Ozbagci}

\author{B\"{u}lent Tosun}

\address{Department of Mathematics \\ Georgia Institute of Technology \\ Atlanta \\ Georgia}

\email{etnyre@math.gatech.edu}

\address{Department of Mathematics \\ Ko\c{c} University \\ Istanbul \\ Turkey}

\email{bozbagci@ku.edu.tr}

\address{University of Alabama\\Tuscaloosa\\Alabama}

\email{btosun@ua.edu}

\subjclass[2010]{57R17, 57K33, 57K43}

\begin{abstract} 
We show that a small Seifert fibered space with complementary legs does not symplectically bound a rational homology ball for at least one choice of orientation. In the case $e_0\leq -1$, we characterize when a small Seifert fibered space with uniquely complementary legs symplectically bounds a rational homology ball. In the case $e_0\geq 0$, we characterize when a small Seifert fibered space with complementary legs,  equipped with a balanced contact structure, symplectically bounds a rational homology ball. Our results highlight a sharp contrast with the smooth category, where many more such Seifert fibered spaces are known to bound smooth rational homology balls. As a consequence of the results above, we also complete the classification of contact structures on oriented spherical $3$-manifolds that admit symplectic rational homology ball fillings. In particular, we show that a closed, oriented $3$-manifold with finite fundamental group admits at most six contact structures, up to isotopy, which are symplectically fillable by rational homology balls.
\end{abstract}

\maketitle

\section{Introduction}

There has been substantial work devoted to determining which rational homology $3$-spheres bound rational homology $4$-balls. In our recent work \cite{EtnyreOzbagciTosun2025}, we studied this question in the symplectic category for small Seifert fibered spaces. In particular, we proved that none of the contact structures on a small Seifert fibered space with $e_0\leq -5$ admit a symplectic rational homology ball filling, even though many such spaces smoothly bound rational homology balls. Earlier, Bhupal and Stipsicz \cite{BhupalStipsicz11}  characterized which Milnor fillable contact structures on small Seifert fibered spaces with $e_0\leq -2$ symplectically bound rational homology balls. In \cite{EtnyreOzbagciTosun2025}, we further showed that these are the only contact structures on small Seifert fibered spaces bounding symplectic rational homology balls when $e_0=-4,-3$ and also when $e_0=-2$ if the space was in the Bhupal-Stipsicz list. 

\begin{figure}[htb]{
\begin{overpic}
{ssfsfigure}
\put(10, 55){$e_0$}
\put(140, 100){$-\frac 1{r_1}$}
\put(159, 113){$-\frac 1{r_2}$}
\put(179, 125){$-\frac 1{r_3}$}
\end{overpic}}
 \caption{A surgery diagram for the small Seifert fibered space $Y(e_{0};r_{1},r_{2},r_{3})$, with normalized Seifert invariants.}
  \label{fig:small}
\end{figure}
Here we continue our study by specifically focusing on small Seifert fibered spaces with complementary legs. Let $Y=Y(e_0; r_1, r_2, r_3)$ denote the small Seifert fibered space with normalized Seifert invariants, where $e_0 \in \Z$ and $r_i \in (0,1) \cap \mathbb{Q}$ for $i=1,2,3$, see Figure~\ref{fig:small}.
We say $Y$ has \dfn{complementary legs} if two of the $r_i$ add to $1$. Without loss of generality, we will assume, for the rest of the paper, that $r_1+r_3=1$. Lecuona \cite{Lecuona2019} has characterized precisely when such a small Seifert fibered space smoothly bounds a rational homology ball. This will be the starting point for our analysis in the symplectic category. We will reprove her main result in a way that is more aligned with our symplectic geometric perspective in this paper. To this end, we first recall a result of Lisca. 

In \cite{Lisca2007}, Lisca showed that a lens space $L(p,q)$ bounds a rational homology ball if and only if $p/q\in \mathcal{R}$, where $\mathcal{R}$ is the collection of rational numbers $p/q>1$ with $gcd(p,q)=1$, $p=m^2$, and $q, p-q,$ or $q^*$ is of the form\begin{enumerate}
\item $mh\pm 1$ where $0<h<m$ with $gcd(h,m)=1$,
\item  $mh\pm 1$ where $0<h<m$ with $gcd(h,m)=2$,
\item $h(m\pm1)$ where $h>1$ divides $2m\mp 1$, or
\item $h(m\pm 1)$ where $h>1$ is odd and divides $m\pm 1$, 
\end{enumerate}
where $0<q^*<p$ is the inverse of $q$ mod $p$. We note that item (2) above did not appear in \cite{Lisca2007} though the proof there does produce it, see Remark~1.5 in \cite{BakerBuckLecuona16}. In Theorem~\ref{lecuonas} below, and throughout this paper, $[a_0, a_1, \ldots, a_n]$ denotes the (Hirzebruch-Jung) continued fraction expansion: 
\[
  a_0-\cfrac{1}{a_1-\cfrac{1}{a_2-\cfrac{1}{\cdots- \cfrac{1}{a_n}}}}
\]
We are now in a position to state a characterization of the small Seifert fibered spaces with complementary legs that bound rational homology balls.

\begin{theorem}[Lecuona 2019, \cite{Lecuona2019}]\label{lecuonas}
Let $Y=Y(e_0;r_1,r_2,r_3)$ be a small Seifert fibered space with complementary legs (i.e.,  $r_1+r_3=1$) whose surgery diagram is depicted in Figure~\ref{fig:small}.  Perform $(-e_0-1)$  Rolfsen twists on the $(-1/r_2)$-framed surgery curve to obtain a new surgery diagram of $Y$ such that the new framing on the horizontal curve is $-1$. Denote the new framing on the $(-1/r_2)$-framed surgery curve  by $-1/r'_2$ and note that  $$-1/r'_2=-n + \frac{1}{[a^2_1, \ldots, a^2_{n_2}]}$$ for some uniquely determined integers $n, a_1^2, \ldots, a_{n_2}^2$, with $a_i^2 \geq 2$ for $1 \leq i \leq n_2$. Then $Y$ smoothly bounds a rational homology ball if and only if $[a_1^2,\ldots, a^2_{n_2}]\in \mathcal{R}$. 
\end{theorem}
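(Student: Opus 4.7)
The plan is a Kirby-calculus reduction, well suited to the symplectic arguments used elsewhere in the paper, converting the question for $Y$ into an instance of Lisca's characterization recalled above. After the $(-e_0-1)$ Rolfsen twists, $Y$ has a surgery description whose central horizontal curve has framing $-1$, with two meridians carrying the slopes $-1/r_1$ and $-1/r_3$ of the complementary legs, and a third meridian of slope $-1/r_2'=-n+1/[a_1^2,\ldots,a_{n_2}^2]$. We then expand the latter into a $-n$-framed meridian of the horizontal curve followed by the integer-framed chain $-a_1^2,\ldots,-a_{n_2}^2$, whose boundary is the lens space $L(p,q)$ with $p/q=[a_1^2,\ldots,a_{n_2}^2]$.

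The main step will be to isolate the subplumbing $V$ spanned by the $-1$-framed horizontal curve, the $-n$-framed meridian, and the two complementary-leg chains. The complementarity $r_1+r_3=1$ ensures that $-1/r_1$ and $-1/r_3$ admit Hirzebruch--Jung continued fraction expansions which are Riemenschneider-dual, equipping $V$ with a palindromic plumbing structure. Using this symmetry, we aim to exhibit $V$ as a ``rational-ball correspondence'' cobordism between $L(p,q)$ and $Y$: gluing any rational homology ball filling of $L(p,q)$ to $V$ produces a rational homology ball filling of $Y$, and conversely (the latter direction using that $-L(p,q)\cong L(p,p-q)$ and the evident $q\leftrightarrow p-q$ symmetry of $\mathcal{R}$).

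Applying Lisca's theorem then identifies the bounding condition for $L(p,q)$ with $p/q=[a_1^2,\ldots,a_{n_2}^2]\in\mathcal{R}$, completing the characterization for $Y$.

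The principal obstacle will be the analysis of $V$: either by explicit handle slides along the central $-1$-framed curve that cancel the dual $r_1$- and $r_3$-leg contributions (exploiting the Riemenschneider duality of their continued fractions), or by a direct linking-matrix computation verifying the correct rational-homology type of $V$ together with compatibility of the gluing with rational-ball fillings. Both avenues hinge crucially on the identity $r_1+r_3=1$, which is what converts the apparent asymmetry between the two complementary legs into the palindromic handle structure that drives the cancellation.
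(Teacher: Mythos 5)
Your overall strategy---reduce to Lisca's theorem by producing a rational homology cobordism between the lens space $L(p,q)$ with $p/q=[a_1^2,\ldots,a_{n_2}^2]$ and $Y$, then glue rational homology ball fillings back and forth---has exactly the shape of the paper's argument (Lemma~\ref{qhc} and Proposition~\ref{bound}). The gap is in your construction of the intermediate piece $V$. As you describe it, $V$ is the subplumbing on the $-1$-framed curve, the $-n$-framed curve, and the two complementary legs; that is a filling of a single $3$-manifold, not a cobordism from $L(p,q)$ to $Y$. The natural way to extract a cobordism from the plumbing picture is to take the complement of the chain plumbing on $-a_1^2,\ldots,-a_{n_2}^2$ inside the full plumbing for $Y$; but that complement is obtained from $L(p,q)\times[0,1]$ by attaching $n_1+n_3+2$ two-handles, so its second homology relative to the lens space end has rank $n_1+n_3+2>0$, and it is never a rational homology cobordism. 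Neither of your proposed repairs can change this: handle slides and appeals to the Riemenschneider duality (or ``palindromy'') of the two legs do not change the $4$-manifold, and a linking-matrix computation will only confirm the excess homology. In particular the converse gluing direction, which needs $V$ to be rationally acyclic relative to its ends, fails for any $V$ carved out of the plumbing.

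The cancellation you are trying to exploit is genuinely there, but it is a $3$-dimensional phenomenon, and it must be used differently. Complementarity implies (Lemma~\ref{gencomplementarylegs}) that deleting the middle leg from the surgery diagram yields $S^1\times S^2$, with the middle-leg curve a regular fiber of a Seifert fibration of $S^1\times S^2$; equivalently, $Y$ is the normal sum of $L(p,q)$ and $S^1\times S^2$ along the core of a Heegaard torus and a regular fiber. The cobordism is then built abstractly, not inside the plumbing: attach a round $1$-handle---one $1$-handle together with one $2$-handle running over it algebraically once, namely the $(-n)$-framed handle attached along the connected sum of the Heegaard core and the fiber---to $(L(p,q)\times[0,1])\sqcup(S^1\times D^3)$. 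That one-$1$-handle-plus-one-$2$-handle structure is precisely what makes the cobordism a rational homology cobordism, and it is the ingredient missing from your plan; once you have it, your gluing argument and the appeal to Lisca (the orientation issue being harmless, since reversing the orientation of a rational homology ball again gives a rational homology ball) complete the proof as in the paper.
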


\subsection{Symplectic rational homology balls}
We now investigate the existence of symplectic rational homology ball fillings of small Seifert fibered spaces with complementary legs.
\begin{theorem}\label{thm: smallSFSnotfillable} A small Seifert fibered space $Y(e_0; r_1, r_2, r_3)$ with complementary legs and $e_0\leq -2$ does not admit a rational homology ball symplectic filling. 
 \end{theorem}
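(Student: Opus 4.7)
The plan is to combine the results of our previous paper \cite{EtnyreOzbagciTosun2025} with a direct verification that no complementary-leg Seifert invariants appear on the Bhupal-Stipsicz list, and then to treat the remaining case $e_0=-2$ with non-Milnor-fillable contact structures via a symplectic-cap and Donaldson diagonalization argument that uses the complementary legs structure essentially.

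First, I would verify directly that no triple $(e_0;r_1,r_2,r_3)$ with $r_1+r_3=1$ and $e_0 \leq -2$ appears on the Bhupal-Stipsicz list of \cite{BhupalStipsicz11}. That list consists of several explicit families of Seifert invariants described by continued fraction conditions (essentially pulled back from the Lisca list $\mathcal{R}$), so the verification is combinatorial. Once this is in hand, the result of \cite{BhupalStipsicz11} immediately implies that the canonical (Milnor fillable) contact structure on $Y$ does not admit a symplectic rational homology ball filling for any $e_0 \leq -2$.

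Combining with \cite{EtnyreOzbagciTosun2025} then dispatches most of the theorem. The case $e_0 \leq -5$ is already handled there (no contact structure on any small Seifert fibered space bounds a symplectic rational ball), and the case $e_0 \in \{-3,-4\}$ follows because the only contact structures that could bound are the Milnor fillable ones on spaces in the Bhupal-Stipsicz list, which the preceding check excludes. The only genuinely new content is the case $e_0 = -2$ with a non-Milnor-fillable tight contact structure $\xi$. Here I would adapt the symplectic cap construction from the previous paper: given a putative symplectic rational homology ball filling $W$ of $(Y,\xi)$, produce a symplectic cap $C$ built from the negative definite plumbing $X$ with $\partial X=Y$ and the open book supporting $\xi$, and glue to obtain a closed symplectic $4$-manifold $Z = W \cup_Y C$. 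Since $b_2(W)=0$, the negative definite part of $H_2(Z)$ agrees with that of $C$, which contains the plumbing lattice of $X$. Applying Donaldson's diagonalization theorem, the plumbing lattice must embed into the standard diagonal negative definite lattice, and the two Riemenschneider-dual chains coming from the complementary legs yield algebraic relations that, together with the symplectic class data supplied by the cap $C$, are incompatible with such an embedding when $e_0 \leq -2$.

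The main obstacle is this last step. A purely smooth Donaldson-theoretic argument cannot possibly work: by Lecuona's Theorem~\ref{lecuonas}, many complementary-leg Seifert fibered spaces smoothly bound rational balls, so the plumbing lattice does embed in the standard diagonal lattice in the smooth category. The symplectic hypothesis must be used essentially, and enters through the rigid structure of $C$: the cap contributes a prescribed collection of symplectic spheres whose self-intersections and intersections with the dual chains are controlled, and positivity of symplectic intersections together with adjunction gives the extra constraints needed to obstruct the diagonal embedding in the symplectic setting.
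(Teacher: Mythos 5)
Your reduction is consistent with the paper's own remarks: the cases $e_0\leq -3$ do follow from \cite[Theorem 1.1]{EtnyreOzbagciTosun2025} once one checks that no complementary-leg space occurs in the relevant lists, and the genuinely new content is $e_0=-2$ (for spaces outside the Bhupal--Stipsicz list, where at most four tight structures could conceivably bound). The gap is in your treatment of exactly that case. The step ``the two Riemenschneider-dual chains \dots together with the symplectic class data supplied by the cap $C$, are incompatible with such an embedding'' is not an argument; it is the entire difficulty, and nothing in the proposal explains what constraint positivity of intersections or adjunction would actually impose on the lattice embedding. As you yourself observe, the smooth lattice embedding \emph{does} exist for many of these spaces (Lecuona), so any obstruction must come from a quantitative symplectic/contact input, and you never identify one. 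Moreover, for a non-Milnor-fillable tight structure there is no canonical negative-definite symplectic cap with controlled sphere classes attached to the plumbing $X$; building such a cap and pinning down its homological data is itself unresolved in your sketch. This is precisely the kind of cap-plus-diagonalization argument that the earlier paper already pushed as far as it goes and which, as the authors note, ``falls short when $e_0=-2$.''

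The paper takes a different and uniform route for all $e_0\leq -2$: it uses the homotopy-theoretic constraint that any contact structure symplectically filled by a rational homology ball has $\theta=-2$. Since the plumbing is negative definite, $Y$ carries a canonical contact structure $\xi_{can}$, and Proposition~\ref{prop: rational} (from the earlier paper) says $\xi_{can}$ minimizes $\theta$ among all tight structures. The heart of the proof is the explicit formula for $\theta(\xi_{can})$ in Proposition~\ref{prop: thetaSFS}, computed from the inverse of the plumbing intersection matrix (Lemma~\ref{lem: inverse}), from which Proposition~\ref{prop: minSFS} shows $\theta(\xi_{can})>-2$ whenever Lecuona's condition $(p-q)/q'\in\mathcal{R}$ holds, i.e.\ whenever $Y$ could smoothly bound at all. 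Hence every tight structure has $\theta>-2$ and none admits a symplectic rational homology ball filling. If you want to complete your approach, you would need to replace the vague Donaldson step with a concrete invariant computation of this kind (or some other genuinely symplectic obstruction valid for all the remaining non-canonical tight structures at $e_0=-2$); as written, the proposal does not prove the $e_0=-2$ case.
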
 
Since any small Seifert fibered space $Y(e_0; r_1, r_2, r_3)$ satisfies $e_0 \leq -2$, up to orientation, we obtain the following immediate consequence.
\begin{corollary} A small Seifert fibered space with complementary legs does not symplectically bound a rational homology ball, with at least one orientation.
\end{corollary}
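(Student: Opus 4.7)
The plan is to deduce the corollary directly from Theorem~\ref{thm: smallSFSnotfillable} by showing that, for any small Seifert fibered space with complementary legs, at least one of the two orientations has its normalized central weight satisfying $e_0\leq -2$.

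The first step is to record how orientation reversal acts on normalized Seifert invariants. Writing $r_i=b_i/a_i$ with $0<b_i<a_i$, the manifold $-Y(e_0;r_1,r_2,r_3)$ initially carries the unnormalized invariants $(-e_0;-b_i/a_i)$. For each $i$, I would renormalize using the identity $-b_i/a_i=(a_i-b_i)/a_i-1$, absorbing the extra $-1$ into the central weight. After doing this for all three legs, I obtain
\[
-Y(e_0;r_1,r_2,r_3)=Y(-e_0-3;\,1-r_1,\,1-r_2,\,1-r_3).
\]
Since $(1-r_1)+(1-r_3)=1$, the complementary legs condition is preserved under orientation reversal.

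The corollary then follows by a dichotomy on the integer $e_0$. If $e_0\leq -2$, Theorem~\ref{thm: smallSFSnotfillable} applies directly to $Y$. If instead $e_0\geq -1$, then $-e_0-3\leq -2$, so Theorem~\ref{thm: smallSFSnotfillable} applies to $-Y$ equipped with the renormalized invariants displayed above. Either way, one orientation fails to admit a symplectic rational homology ball filling. No genuine obstacle arises; the content is entirely the standard orientation-reversal computation on normalized Seifert invariants, after which the conclusion is a one-line case split.
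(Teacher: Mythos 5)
Your proposal is correct and follows essentially the same route as the paper, which deduces the corollary immediately from Theorem~\ref{thm: smallSFSnotfillable} using the fact that $e_0(Y)+e_0(-Y)=-3$ (equivalently, your formula $-Y(e_0;r_1,r_2,r_3)=Y(-e_0-3;1-r_1,1-r_2,1-r_3)$), so that one of the two orientations always has $e_0\leq -2$. Your explicit check that the complementary-legs condition is preserved under orientation reversal is a detail the paper leaves implicit, but it is the same argument.
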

\begin{remark} We would like to emphasize that Theorem~\ref{thm: smallSFSnotfillable} confirms our Conjecture 1.5 in \cite{EtnyreOzbagciTosun2025}, for any small Seifert fibered space $Y(-2; r_1, r_2, r_3)$  with complementary legs. 
\end{remark}
\begin{remark} We would like to point out that when $e_0 \leq -3$, Theorem~\ref{thm: smallSFSnotfillable} also follows directly from  \cite[Theorem 1.1]{EtnyreOzbagciTosun2025} since one can easily verify that the small Seifert fibered spaces described by the plumbing diagrams in $\mathcal{QHB}$ (depicted in \cite[Figure 3]{EtnyreOzbagciTosun2025}) do not have complementary legs.   Any small Seifert fibered space $Y(-2; r_1, r_2, r_3)$ with complementary legs is an $L$-space and therefore,  \cite[Corollary 1.6]{EtnyreOzbagciTosun2025} implies that $Y(-2; r_1, r_2, r_3)$ with a pair of complementary legs admits {\em at most four} contact structures that might admit rational homology ball symplectic fillings. Hence, the results in our earlier work \cite{EtnyreOzbagciTosun2025} almost prove Theorem~\ref{thm: smallSFSnotfillable}  here,  but fall short when $e_0=-2$. 
\end{remark} 
\begin{remark}
We note that from Theorem~\ref{lecuonas} there are many small Seifert fibered spaces with complementary legs and any $e_0$ that bound smooth rational homology balls. So we see a stark contrast between the smooth and symplectic categories. 
\end{remark}

We now turn to the case $e_0 \geq -1$. In Lemma~4.5 of \cite{EtnyreOzbagciTosun2025}, several symplectic rational homology ball fillings of small Seifert fibered spaces $Y(-1;r_1,r_2,r_3)$ were constructed.\footnote{The Seifert invariants in \cite{EtnyreOzbagciTosun2025} appear different than the ones given in our theorem here, but substituting $h$ for $m-h$ and $n$ for $n+1$ equates the results there and in this paper. The discrepancy comes from the different surgery descriptions utilized in the two papers.} Here, we prove that these are the only such fillings of contact structures on  $Y(-1; r_1, r_2, r_3)$ with uniquely complementary legs. To define \dfn{uniquely complementary}, we first define a subleg. Using the slam-dunk move on surgery diagrams of $3$-manifolds, see \cite{EtnyreOzbagciTosun2025}, we can expand each of the circles with framing $1/r_i$ in Figure~\ref{fig:small} as a chain of unknots with integer surgery coefficients coming from the continued fraction expansion of $-1/r_i$.  We call this chain a \dfn{leg} of the Seifert fibered space. An \dfn{initial subleg} is a connected subset of this chain that is linked to the unknot with framing $e_0$. Now, a small Seifert fibered space is \dfn{uniquely complementary} if two of its legs are complementary and the third leg does not have an initial subleg that is complementary with an initial subleg of either of the other two legs. 

\begin{theorem} \label{thm: minusone} 
A small Seifert fibered space with uniquely complementary legs and $e_0=-1$,  carries a contact structure that is symplectically fillable by a rational homology ball if and only if it is of the form $$Y(-1; \dfrac{a}{b}, \dfrac{m^2}{nm^2-mh+1}, 1-\dfrac{a}{b})$$for some $a/b\in \Q\cap (0,1)$ written in reduced terms, some integer $n > 1$, and some relatively prime integers $0 < h < m$, or $n\geq 1$, $m=1$ and $h=0$.

Moreover, there are at least $\min\{\lfloor b/2\rfloor,n\}$, respectively $\min\{\lfloor b/2\rfloor,2n\}$, pairwise nonisotopic tight contact structures with this property on such a small Seifert fibered space if $m=1$, respectively $m>1$. If $2n$, respectively $n+1$, is less than or equal to $b/2$, then there are exactly $2n$, respectively $n+1$,  such contact structures when $m>1$, respectively $m=1$. Furthermore, if $n>2$, then for $2n-2$, respectively $n-2$, of these contact structures, there is a unique symplectic rational homology ball filling of the contact manifold, up to blowups. 
\end{theorem}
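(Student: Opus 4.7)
The plan is to bootstrap from Lecuona's smooth classification (Theorem~\ref{lecuonas}) by imposing the stronger symplectic constraint and then to invoke contact-topological methods to identify which contact structures are fillable. Since $e_0=-1$, the Rolfsen twist count $(-e_0-1)$ is zero, so $r_2'=r_2$ and the condition $[a_1^2,\ldots,a_{n_2}^2]\in \mathcal{R}$ translates directly into a condition on $r_2$. Any symplectic rational homology ball filling is in particular a smooth one, so a candidate manifold must have middle leg coming from one of Lisca's four families in $\mathcal{R}$. The first step is to show that, under the hypothesis of uniquely complementary legs, only Lisca's family~(1) can occur: in families (2), (3), and (4), the Hirzebruch-Jung expansion of $r_2$ produces initial sublegs that are complementary to initial sublegs of $r_1$ or $r_3$, violating uniqueness. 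This combinatorial check, combined with a direct calculation expressing Lisca's parameters $(m,h)$ as Seifert invariants, yields $r_2=m^2/(nm^2-mh+1)$ and the form stated in the theorem; the degenerate branch $m=1$, $h=0$ is handled separately.

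\textbf{Existence and lower bounds.} For the \emph{if} direction, I would construct symplectic rational homology ball fillings by adapting the recipe of \cite[Lemma~4.5]{EtnyreOzbagciTosun2025}, applying the reindexing described in the footnote. Each such filling induces a balanced contact structure on the boundary, and distinct combinatorial choices within the construction produce pairwise non-isotopic contact structures. The lower bounds $\min\{\lfloor b/2\rfloor,n\}$ and $\min\{\lfloor b/2\rfloor,2n\}$ arise from two independent sources of distinguishability: the complementary outer legs contribute at most $\lfloor b/2\rfloor$ choices from pair-matching complementary Euler classes on the $a/b$ and $1-a/b$ legs, while the middle leg contributes $n$ when $m=1$ or $2n$ when $m>1$, the extra factor of two reflecting an additional sign choice on a singular fiber present only when $m>1$.

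\textbf{Upper bounds, uniqueness of filling, and main obstacle.} When $2n\le b/2$ (respectively $n+1\le b/2$), the middle leg becomes the binding constraint, and I would match the construction above against a Honda-style classification of tight contact structures on the small Seifert fibered piece to obtain the exact counts $2n$ and $n+1$; the extra contact structure in the $m=1$ case comes from the degenerate $h=0$ branch. For uniqueness of filling up to blowup, I would cut the manifold along an incompressible torus separating the complementary pair of legs from the middle leg. One side is filled by a symplectic filling of a lens space, for which a classification due to Lisca (building on McDuff) is available; the remaining piece is forced into a tightly prescribed sequence of symplectic blowups by positivity of intersection and adjunction. For $n>2$, rigidity of this classification leaves exactly one minimal filling for $2n-2$ (respectively $n-2$) of the contact structures, with the handful of exceptions corresponding to low-$n$ symmetries of the Hirzebruch-Jung expansion that permit an alternative block-rebracketing. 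The principal obstacle is this last step: transporting the lens-space filling classification across the JSJ decomposition while carefully tracking which minimal fillings are genuinely inequivalent rather than related by a symplectic blowup or by a rearrangement of continued-fraction blocks. The other components of the proof are either direct combinatorial verifications (ruling out Lisca families (2)--(4) under unique complementarity, counting tight contact structures via convex surface theory) or controlled adaptations of constructions already present in \cite{EtnyreOzbagciTosun2025}.
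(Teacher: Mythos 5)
The central gap is in your ``only if'' direction: the claim that unique complementarity combinatorially rules out Lisca's families (2)--(4) for the middle leg is unjustified and in fact false. The uniquely-complementary condition compares initial sublegs of the \emph{middle} leg with initial sublegs of the two \emph{outer} legs, and the outer legs are determined by $a/b$, which is completely independent of $r_2$. Choosing $a/b$ so that its legs begin with coefficients that cannot complement any initial string of the middle leg, one gets uniquely complementary spaces whose middle leg lies in families (2)--(4); these smoothly bound rational homology balls by Theorem~\ref{lecuonas}, so they must be excluded by contact-geometric, not combinatorial, means. The paper's route is: by Theorem~\ref{thmMatkovic} only balanced structures are fillable at all; a balanced structure exhibits $(Y,\xi)$ as the convex top of a Stein rational homology cobordism over a lens space, built by attaching a Stein round $1$-handle along $L_2$ and the core of a Heegaard torus (Lemma~\ref{qhc}, Lemma~\ref{maxtb}, Proposition~\ref{SteinCobord}); hence $\theta(\xi)$ equals the $\theta$-invariant of the lens space structure, and since a symplectic rational homology ball filling forces $\theta=-2$, Lemma~9.4 of \cite{EtnyreOzbagciTosun2025} (only $\pm\xi_{can}$ on $L(m^2,mh-1)$ has $\theta=-2$ among tight structures on lens spaces with $p/q\in\mathcal{R}$), together with \cite{Lisca08, ChristianLi23, EtnyreRoy21}, pins down both the lens space and the contact structure on it. Without this cobordism-transport step your argument does not force the specific form $m^2/(nm^2-mh+1)$, nor does it identify which contact structures on $Y$ can be fillable.

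Your counting and uniqueness mechanisms also diverge from what actually works. In a balanced structure the two outer legs admit only a global sign choice, not $\lfloor b/2\rfloor$ ``pair-matchings''; in the paper $\lfloor b/2\rfloor$ is an upper cap on distinguishability coming from $H^2$ of the Stein filling being $\Z/b\Z$, so that Chern classes (which separate contact structures via \cite[Theorem~1.2]{LiscaMatic97}) take at most $b$ or $b/2$ values, while the $n$ (respectively $2n$) counts the stabilization choices for the single Legendrian $2$-handle of the round handle, times the choice of $\pm\xi_{can}$ when $m>1$; the exact counts then follow because the construction is shown to produce \emph{all} fillable structures, not by matching against a Honda-type classification. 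Finally, the uniqueness-up-to-blowup statement, which you flag as your principal obstacle, is precisely where the paper invokes \cite[Theorem~1.4]{ChristianMenke19pre} for Legendrians stabilized with both signs (and its one-signed variant, cf.\ \cite[Theorem~3.1]{EtnyreRoy21}); your proposed JSJ-style transport of the lens space filling classification is not carried out and would still need an input of this kind.
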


\begin{remark}
We expect the first paragraph in Theorem~\ref{thm: minusone}, and an analog of the second paragraph, to hold for any small Seifert fibered space with complementary legs (and not just uniquely complementary legs), but this will require further investigation. We note in many examples that this is indeed true. See, for example, Theorem~\ref{classifyfillings} and its proof. 
\end{remark}

\begin{remark}
The lower bound at the beginning of the second paragraph of Theorem~\ref{thm: minusone} can definitely be improved in some cases. For example, the proof of the theorem will show that if $b$ is odd, $b$ and $n$ are relatively prime, $m=1$, and $n>b^2$, then there are at least $b^2$ contact structures that bound symplectic rational homology balls. This follows because the first homology of the $3$-manifold is $\Z/b^2\Z$ and $b^2$ of the contact structure constructed in the proof will have distinct $\Gamma$-invariants as defined in \cite{Gompf98}. 
\end{remark}

\begin{remark}\label{extra}
We now consider some explicit examples where the bound on the number of contact structures bounding symplectic rational homology balls in Theorem~\ref{thm: minusone} is not sharp.
\begin{enumerate} 
\item  Consider $M_p=Y(-1; \frac{1}{2}, \frac{1}{p}, \frac{1}{2})$ for $p\geq 2$. With the notation of Theorem~\ref{thm: minusone},  this corresponds to $a/b=1/2, m=1, h=0$, and $n=p-1$.  According to Theorem~\ref{thm: minusone}, for any $p \geq 2$, there is at least one contact structure on $M_p$ that admits a symplectic rational homology ball filling. Note that by \cite[Remark~3.12]{GhigginiLiscaStipsicz07}, $M_p$ carries exactly three pairwise nonisotopic tight contact structures, for any $p\geq 2$. When $p>2$, one of those cannot be filled by a symplectic rational homology ball since it has $\theta\neq -2$. One may see this by combining Proposition~3.2 and Lemma~3.5 of \cite{GhigginiLiscaStipsicz07} and recalling the relation between the $d_3$ and $\theta$ invariants (that is, $4d_3-2=\theta$). The other two contact structures are contactomorphic and fillable by symplectic rational homology balls (as can be seen in our proof of the above theorem). 
 When $p=2$, however,  all three contact structures are pairwise contactomorphic (\cite[Section~$3.3$]{GhigginiLiscaStipsicz07}) and symplectically fillable by rational homology balls.

\item As another example, consider $Y_3=Y(-1; \frac{2}{3},  \frac{1}{2}, \frac{1}{3})$. According to Theorem~\ref{thm: minusone}, there is at least one tight contact structure that admits a symplectic rational homology ball filling. However, \cite{GhigginiLiscaStipsicz07} shows that there are exactly five tight contact structures on $Y_3$, up to isotopy. It turns out that $\theta \neq -2$ for two of these and hence they are not symplectically fillable by rational homology balls. In Figure~\ref{fig:3QHB}, we depicted three Stein rational homology balls with boundary $Y_3$. To see that the contact structures induced on $Y_3$ by these three diagrams are pairwise nonisotopic, we first notice that if $X$ denotes the underlying smooth rational homology ball, then $H^2(X)=\Z/3\Z$ and it is generated by the Poincar\'e dual of the homology class of the cocore $[C]$ to the $2$-handle. By \cite[Theorem~4.12]{Gompf98} we see that in any one of the Stein diagrams, this class is given by $\rot(L)[C]$ where $L$ is the Legendrian knot along which the $2$-handle is attached. Thus, the three lower diagrams give the Chern classes Poincar\'e dual to $2[C], 0,$ and $-2[C]$. Since these classes are all distinct in $H^2(X)$ wee see by \cite[Theorem~1.2]{LiscaMatic97} that the three contact structures on $Y_3$, induced by these Stein rational homology balls, are pairwise nonisotopic.  Therefore, up to isotopy,  there are exactly three tight contact structures on $Y_3$ that are symplectically fillable by rational homology balls.
\end{enumerate}
\end{remark}

\begin{figure}[htb]{
\begin{overpic}
{3QHB}
\put(100, 137){\large\rotatebox{45}{$\cong$}}
\put(264, 142){\large\rotatebox{-45}{$\cong$}}
\end{overpic}}
 \caption{The Legendrian knots in the top three figures are Legendrian isotopic
(see \cite[Figure~20]{Gompf98}), and hence all three diagrams determine
the same Stein rational homology ball filling a fixed contact structure
on $Y_3$. The two Stein rational homology
balls depicted in the bottom row determine two more 
contact structures on $Y_3$, up to isotopy.}
  \label{fig:3QHB}
\end{figure}

According to \cite{GhigginiLiscaStipsicz06} we know that any tight contact structure on $Y=Y(e_0;r_1,r_2,r_3)$ with $e_0\geq 0$ is obtained from contact surgery on the link $L_1\cup L_2\cup L_3$ in $S^1\times S^2$ shown in Figure~\ref{fig:s1s2surgery}. We call each of the $L_i$ a leg of the Seifert fibration. Recall that the contact surgery on each of the $L_i$ is determined by a sequence of signs related to the continued fraction expansion of $-1/r_i$. We say that the contact surgery on leg $L_i$ is \dfn{consistent} if all the signs are the same. If $Y$ has complementary legs, then we say that the contact structure coming from contact surgery is \dfn{balanced} if each of the complementary legs is consistent, but they have opposite signs. 

In Lemma~4.5 of \cite{EtnyreOzbagciTosun2025}, several symplectic rational homology ball fillings of $Y$ with $e_0\geq 0$ were constructed. Our next result below shows that, for Seifert fibered spaces with complementary legs, these fillings account for {\em almost} all fillings that could exist. 

\begin{theorem}  \label{thm: nonneg}  
Let $Y$ be a small Seifert fibered space with complementary legs and $e_0\geq 0$, equipped with a balanced contact structure $\xi$. Then $(Y, \xi)$  bounds a symplectic rational homology ball if and only if $Y$ is of the form $Y(e_0;r, s, 1-r)$, where $s$ is given by 
\[
-\dfrac{1}{s}=\frac{-m^2+mh-1}{m^2+(e_0+1)(-m^2+mh-1)}<-1,
\]
for some $0<h<m$ relatively prime (note that once $e_0 \geq 0$ is fixed, only certain values  of $h$ and $m$ will satisfy the desired inequality). Moreover, on such a $Y$, there are four balanced contact structures (up to isotopy) that bound a symplectic rational homology ball. 

If $\xi$ is not a balanced contact structure, then $Y$ might bound a symplectic rational homology ball if it is given by the surgery diagram in Figure~\ref{fig:balanced} where $[a^2_1,\ldots, a^2_{n_2}]\in \mathcal{R}$. Given such a $Y$ there is at most two, respectively four, contact structures (up to isotopy) that can bound a symplectic rational homology ball if $e_0=0$, respectively $e_0>0$. 
\end{theorem}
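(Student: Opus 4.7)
The plan is to combine a contact surgery/symplectic cut-and-paste argument for the balanced case with the smooth restrictions coming from Theorem~\ref{lecuonas}. First I would invoke the Ghiggini--Lisca--Stipsicz classification \cite{GhigginiLiscaStipsicz06} to write any tight $(Y,\xi)$ with $e_0\geq 0$ as contact surgery on the link $L_1\cup L_2\cup L_3\subset (S^1\times S^2,\xi_{std})$ depicted in Figure~\ref{fig:s1s2surgery}, with sign sequences on each leg determined by the continued fraction expansion of $-1/r_i$. The balanced assumption says the two complementary legs $L_1,L_3$ are consistent with opposite signs. After a Legendrian isotopy putting $L_1$ and $L_3$ onto a standard convex torus in complementary positions, the pair of consistent-with-opposite-signs contact surgeries on these legs can be realized as the boundary of a concave symplectic cobordism to a strictly simpler contact manifold, namely a lens space $(L(p,q),\xi)$ with surgery coefficient coming only from the middle leg and the $e_0$-framed circle. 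A symplectic rational homology ball filling $W$ of $(Y,\xi)$ then caps off to produce a symplectic filling $W'$ of $(L(p,q),\xi)$, and standard rational-homology computations show $W'$ still has $b_1=b_2=0$ rationally.

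In the second step I would extract the smooth constraint. Because $W$ is in particular a smooth rational homology ball, Theorem~\ref{lecuonas} applies: carrying out the Rolfsen twist on the $(-1/r_2)$-framed circle and renormalizing, the continued fraction $[a_1^2,\ldots,a_{n_2}^2]$ must lie in Lisca's set $\mathcal R$. Pushing this backwards through the $(-e_0-1)$ Rolfsen twists exactly reverse-engineers the rational value $-1/s$ in the form
\[
-\tfrac{1}{s}=\frac{-m^2+mh-1}{m^2+(e_0+1)(-m^2+mh-1)}<-1,
\]
for some $0<h<m$ relatively prime. At this point we have narrowed the topology of $Y$, but $\mathcal R$ has four families, while the statement allows only one.

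The third step is the symplectic filtration of Lisca's four families. Here I would use Lisca's classification of symplectic fillings of universally tight lens spaces, applied to $(L(p,q),\xi)$, to list all Stein fillings combinatorially as zero-continued-fraction blocks of $-p/q$. Only fillings whose continued-fraction data matches Lisca's family (1), i.e.\ the $mh\pm1$ case with $\gcd(h,m)=1$, extend over the previously constructed symplectic cobordism while remaining a rational homology ball; the other three families generate too much $H_2$ or fail the balanced-sign consistency when the cobordism is re-attached. Ruling out families (2)--(4) is the step I expect to be the main obstacle, since smoothly they are all allowed by Lecuona's theorem, and distinguishing them requires a genuinely symplectic handle-by-handle analysis, most likely exactly in the spirit of \cite{EtnyreOzbagciTosun2025} and the Bhupal--Stipsicz calculation. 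Combined with Lemma 4.5 of \cite{EtnyreOzbagciTosun2025}, which already builds symplectic rational homology ball fillings in the family (1) case, this completes the characterization.

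For the counting statement in the balanced case, the balanced contact structures on a fixed $Y(e_0;r,s,1-r)$ form four isotopy classes: there is a $\mathbb Z/2$ from flipping the shared pair of signs on the complementary legs, and an additional $\mathbb Z/2$ coming from the two sign patterns on the middle leg that yield tight contact structures filled by the constructions of Lemma~4.5 of \cite{EtnyreOzbagciTosun2025}; the $d_3$-computation, together with \cite[Theorem 1.2]{LiscaMatic97}, shows these four are pairwise non-isotopic. For the non-balanced case, Theorem~\ref{lecuonas} gives directly the necessary condition $[a_1^2,\ldots,a_{n_2}^2]\in\mathcal R$, and the upper bound of two (for $e_0=0$) or four (for $e_0>0$) tight contact structures bounding a symplectic rational homology ball follows from \cite[Corollary 1.6]{EtnyreOzbagciTosun2025} applied to $Y$, which is an $L$-space under the assumptions, restricting the number of candidate $\spinc$ structures that could be realized by a rational homology ball filling.
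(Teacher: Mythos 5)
There is a genuine gap, and it sits exactly where you flag it: the step that eliminates Lisca's families (2)--(4) (and the non-canonical tight structures on $L(m^2,mh-1)$) is the heart of the theorem, and your proposal does not supply an argument for it. The reasons you sketch --- that the other families ``generate too much $H_2$'' or ``fail the balanced-sign consistency'' --- are not correct as stated: smoothly those fillings do exist by Theorem~\ref{lecuonas}, and homology cannot see the difference. The paper's mechanism is different and purely a homotopy-theoretic obstruction: a balanced contact structure on $Y$ is, by construction, the convex end of a Stein \emph{rational homology} cobordism built from a round $1$-handle attached to a tight lens space (Proposition~\ref{SteinCobord}), so its $\theta$-invariant equals that of the tight structure on $L(p,q)$; since a symplectic rational homology ball filling forces $\theta=-2$, and Lemma~9.4 of \cite{EtnyreOzbagciTosun2025} shows every tight structure on a lens space with $p/q\in\mathcal{R}$ has $\theta>-2$ except $\pm\xi_{can}$ on $L(m^2,mh-1)$, only family (1) survives. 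No appeal to Lisca's classification of fillings of the lens space, and no handle-by-handle analysis, is needed once this is in place.

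A second problem is directional: you propose to ``cap off'' a rational homology ball filling $W$ of $(Y,\xi)$ to get a symplectic filling of $(L(p,q),\xi)$, but the only cobordism available from the construction has the lens space at its \emph{concave} end and $Y$ at its \emph{convex} end, so it glues under a filling of the lens space (this is the ``if'' direction), not on top of $W$. Passing from a filling of $Y$ to a filling of the lens space would require a decomposition result along the mixed torus in the spirit of \cite[Theorem~1.4]{ChristianMenke19pre}, which you do not invoke (the paper uses it only for the uniqueness-of-fillings statement in the $e_0=-1$ case, not here). Finally, for the non-balanced counts, \cite[Corollary~1.6]{EtnyreOzbagciTosun2025} gives at most four candidates and so cannot by itself produce the bound of two when $e_0=0$; the paper instead uses the consistent-legs restriction from the proof of Proposition~5.1 of \cite{EtnyreOzbagciTosun2025} together with the identifications of contact structures from \cite{GhigginiLiscaStipsicz06} when all legs carry the same sign, plus a separate argument (either Proposition~\ref{bound} or $\theta=-4/3$) to dispose of $Y(0;\tfrac12,\tfrac12,\tfrac12)$. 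Your count of four balanced structures and the Chern-class/Lisca--Mati\'c distinctness idea are fine in spirit, but note that $d_3$ alone does not separate conjugate structures; the paper cites the proof of Theorem~2.4 in \cite{GhigginiLiscaStipsicz06} for distinctness in the $e_0\geq 0$ case.
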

\begin{remark}
We conjecture that none of the contact structures in the second paragraph of the theorem admit symplectic rational homology ball fillings. If so, then the theorems above completely characterize when a Seifert fibered space with complementary legs bounds a symplectic rational homology ball. The result below gives some evidence towards this conjecture, and also answers positively the prediction made in \cite[Remark~$1.21$]{EtnyreOzbagciTosun2025}. (See also Section~\ref{spherical}.)
\end{remark}

\begin{proposition}\label{dihedO}
A small  Seifert fibered space $Y=Y(e_0; \frac{1}{2}, s, \frac{1}{2})$ with $e_0\geq 0$ equipped with a contact structure $\xi$ admits a rational homology ball symplectic filling if and only if $Y$ is of the form as in the first part of Theorem~\ref{thm: nonneg} and $\xi$ is balanced.
\end{proposition}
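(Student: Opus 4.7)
The ``if'' direction is immediate from the first part of Theorem~\ref{thm: nonneg} applied to $Y=Y(e_0;\tfrac12,s,\tfrac12)$, so the content lies in the converse. Suppose $(Y,\xi)$ with $e_0\geq 0$ admits a symplectic rational homology ball filling. Theorem~\ref{thm: nonneg} leaves exactly two possibilities: either $\xi$ is balanced and $Y$ already has the form of the first paragraph of that theorem (the desired conclusion), or $\xi$ is not balanced and $Y$ admits a surgery presentation as in Figure~\ref{fig:balanced} with $[a_1^2,\ldots,a_{n_2}^2]\in\mathcal{R}$. The entire content of the proposition is thus to exclude the second possibility when $r_1=r_3=\tfrac12$.

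My plan is to turn the exclusion into a direct continued fraction calculation driven by the extra symmetry $r_1=r_3=\tfrac12$. Starting from the surgery description of $Y$ with two $(-2)$-framed legs, I would apply the recipe of Theorem~\ref{lecuonas}: perform $(-e_0-1)$ Rolfsen twists on the $(-1/s)$-framed unknot so that the central unknot becomes $(-1)$-framed, the complementary legs remain two $(-2)$-framed unknots, and the middle coefficient becomes
\[
-\frac{1}{s'}=-n+\frac{1}{[a_1^2,\ldots,a_{n_2}^2]}.
\]
Expanding this middle coefficient into a linear chain of integer-framed unknots via slam-dunks and then blowing down the central $(-1)$-unknot produces a linear plumbing whose associated rational number $p/q$ is computable explicitly in terms of $n$ and the $a_i^2$, together with the two inherited $(-2)$-entries from the symmetric legs. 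The rational homology ball condition forces this $p/q$ into Lisca's set $\mathcal{R}$.

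I would then work through the four subfamilies (1)--(4) of $\mathcal{R}$ in turn. The insertion of the two $(-2)$-entries coming from the symmetric complementary legs imposes strong divisibility constraints (for instance $p=m^2$ must be compatible with the continued fraction containing two $-2$'s in the required positions); my expectation is that every admissible $[a_1^2,\ldots,a_{n_2}^2]$ corresponds, after undoing the Rolfsen twists, to an $s$ of the exact form
\[
-\frac{1}{s}=\frac{-m^2+mh-1}{m^2+(e_0+1)(-m^2+mh-1)},
\]
so that $Y$ falls in the first paragraph of Theorem~\ref{thm: nonneg} and $\xi$ is in fact balanced, contradicting the assumption.

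The main obstacle is the last step: $\mathcal{R}$ splits into four arithmetic subfamilies with different parity and gcd conditions, and the two inserted $(-2)$-entries shift the associated continued fraction in a nontrivial way, producing several subcases that must each be ruled out or identified with the balanced family. A cleaner alternative, suggested by the $d_3$ computations in Remark~\ref{extra}, would be to compute $\theta$ in closed form for every non-balanced tight contact structure on $Y(e_0;\tfrac12,s,\tfrac12)$ and show $\theta\neq -2$, which directly obstructs rational homology ball fillability and avoids the continued fraction bookkeeping entirely. I would pursue the arithmetic route first since it fits the techniques already in place for Theorem~\ref{thm: nonneg}, and keep the $d_3$ route in reserve for any subcases that resist direct attack.
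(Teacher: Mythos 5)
Your reduction to the non-balanced case is fine, but your primary route contains a genuine logical gap: from the hypothesis that $Y$ admits a diagram as in Figure~\ref{fig:balanced} with $[a_1^2,\ldots,a_{n_2}^2]\in\mathcal{R}$, you propose to show (via the subfamilies of $\mathcal{R}$) that $Y$ has the form in the first paragraph of Theorem~\ref{thm: nonneg}, and then conclude that ``$\xi$ is in fact balanced, contradicting the assumption.'' That last step is a non sequitur. Whether $\xi$ is balanced is a property of the contact surgery presentation of $\xi$, not of the underlying smooth manifold; the very manifolds $Y(e_0;\tfrac12,s,\tfrac12)$ of the balanced-fillable form carry both balanced and non-balanced tight structures, and the whole content of the proposition is to rule out symplectic rational ball fillings of the latter. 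No amount of continued-fraction arithmetic at the level of $Y$ (Lisca's four subfamilies, divisibility constraints from the two $(-2)$-legs, etc.) can distinguish a non-balanced $\xi$ from a balanced one on the same $Y$, so the primary route cannot close the argument. (Your auxiliary expectation that every admissible $[a_1^2,\ldots,a_{n_2}^2]$ is of the form $m^2/(mh-1)$ is also unjustified, but even granting it the contradiction would not follow.)

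What is actually needed is a contact-level obstruction, which is exactly your ``reserve'' route and is how the paper proceeds; but you give no details, and the details are the substance. One first reduces (via consistency of legs, from the earlier paper) to just two non-balanced candidates $\xi^{\pm}$ up to overall sign, then computes $\theta(\xi^{\pm})$ in closed form from a Stein handlebody description (Lemma~\ref{dihedralC}). For $\xi^-$ one finds $\theta\notin\Z$, so $\theta\neq -2$. For $\xi^+$ the computation does \emph{not} avoid the arithmetic: one invokes Theorem~\ref{lecuonas} to get $I(r/s)\le 1$, handles $I(r/s)<1$ directly, and in the case $I(r/s)=1$ uses $r/s=m^2/(mh-1)$ to compute $p,q,p',q'$ explicitly and the identity $(e_0+1)p'+q'=p-2$, which shows $\theta(\xi^+)=-2$ would force $-3p+2=0$, impossible. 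So the $\theta$-route is correct but requires both the closed-form computation and precisely the continued-fraction bookkeeping you hoped to bypass; as written, your proposal establishes neither.
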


\begin{remark}
We point out a subtle difference between characterizing smooth rational homology ball fillings versus symplectic rational homology ball fillings. It is a standard fact that the lens space $L(p,q)$ arises as the double cover of $S^3$ branched over the two-bridge link $K(p,q)$. In \cite[Corollary~$1.3$]{Lisca2007}, Lisca not only characterized which lens spaces bound smooth rational homology balls but also proved that $L(p,q)$, assuming $p$ is odd, bounds a rational homology ball exactly when $K(p,q)$ is a smoothly slice knot. In particular, a rational homology ball that $L(p,q)$ bounds can be constructed by taking the double branched cover of $B^4$ branched over the slice disk. This result was later extended by Lecuona \cite{Lecuona2019} to cover Seifert fibered spaces with complementary legs that arise as the double cover of $S^3$ branched along a Montesinos link that can be put in the form $K(p,q)\sqcup U$ where $U$ is the unknot. When determining symplectic rational homology ball fillings for lens spaces or Seifert fibered spaces with complementary legs, a natural guess would be that perhaps the two-bridge knots involved are symplectically or Lagrangian slice, which would require the knot to be at least quasi-positive.  It is well-known, by combining \cite{BoileauRudolph1995, KronheimerMrowka93, Rudolph93} that this is never the case. (See also \cite{Ozbagci-bridge}, for two new proofs of this fact). 
\end{remark}

The strategy of proof for Theorem~\ref{thm: smallSFSnotfillable} involves the $\theta$-invariant calculations. The $\theta$-invariant is an invariant of homotopy classes of plane fields, defined in \cite{Gompf98}. It is well-known, see for example \cite{EtnyreOzbagciTosun2025}, that if a contact $3$-manifold $(M,\xi)$ symplectically bounds a rational homology ball, then $\theta(\xi)=-2$. In Sections~\ref{m2case} and~\ref{subsec: theta} we show by explicit calculations that any contact structure on $Y(e_0;r_1,r_2,r_3)$ with complementary legs and $e_0\leq -2$ has $\theta$-invariant larger than $-2$. We prove Theorem~\ref{thm: minusone} (see Theorem~\ref{maine00}) and~\ref{thm: nonneg} (see Theorem~\ref{maine0+}) in Section~\ref{eogeqm1}, by constructing Stein cobordisms from lens spaces to certain small Seifert fibered spaces and using facts from \cite{EtnyreOzbagciTosun2025}. Section~\ref{sfswcl} discusses small Siefert fibered spaces with complementary legs and relates prior work in the area to conventions needed here. 

This paper can be thought of as a continuation of \cite{EtnyreOzbagciTosun2025}. As such, we refer to Section~2 of that paper for the background on standard facts from contact geometry, smooth and contact surgeries, and homotopy invariants of plane fields. 

\subsection{Spherical $3$-manifolds}\label{spherical}
We now turn to the classification of spherical $3$-manifolds {\em with either orientations}, equipped with some contact structures,  that admit rational homology ball symplectic fillings. We refer the reader to our earlier work \cite{EtnyreOzbagciTosun2025}, for basic definitions of spherical $3$-manifolds  that will be used below. 

It was shown by Choe and Park \cite{ChoePark2021}  that a spherical $3$-manifold $Y$ bounds a smooth rational homology ball if and only if $Y$ or $-Y$ is homeomorphic to one of the following manifolds: 
\begin{enumerate}
\item $L(p, q)$ such that $p/q \in\mathcal{R}$,
\item\label{2} $D(p, q)$ such that $(p-q)/q' \in\mathcal{R}$,
\item\label{3} $T_3$, $T_{27}$ and $I_{49}$, 
\end{enumerate}
where $p$ and $q$ are relatively prime integers such that $0 < q < p$, and $0 < q' < p - q$ is
the reduction of $q$ modulo $p - q$. See the beginning of this introductory section for the description of the set $\mathcal{R}$. It follows that when searching for spherical $3$-manifolds bounding symplectic rational homology balls, we only need to consider the $3$-manifolds listed above.  

Suppose that  $\xi$ is a contact structure on the spherical $3$-manifold $Y$, oriented as the link of the corresponding quotient surface singularity. In \cite{EtnyreOzbagciTosun2025}, we showed that if $(Y, \xi)$ admits a symplectic rational homology ball filling,  then $Y$ is orientation-preserving diffeomorphic to a lens space $L(m^2,mh-1)$ for some coprime integers $0< h < m$, and $\xi$ is contactomorphic to the canonical contact structure $\xi_{can}$. We also observed, however, that this result does not {\em necessarily} hold true for a spherical $3$-manifold  equipped with the orientation opposite to the canonical one it carries when viewed as the singularity link. In particular, we showed that, among the spherical $3$-manifolds listed in item (3) above, $-T_3=Y(-1; 2/3, 1/2, 1/3)$ carries at least two nonisotopic tight contact structures, both of which admit symplectic rational homology ball fillings, but neither $-T_{27}=Y(3; 2/3, 1/2, 1/3)$ nor $-I_{49}=Y(0; 4/5, 1/2, 1/3)$ admits a symplectic rational homology ball filling. 
Here we observe that $-T_3$ actually carries exactly three pairwise nonisotopic contact structures which admit symplectic rational homology ball fillings. This is proven in Part~(2) of Remark~\ref{extra} and the Stein rational homology balls are depicted in Figure~\ref{fig:3QHB}, where $Y_3=-T_3$. 
  
 As a byproduct of our results on small Seifert fibered spaces,  we also complete the classification of all spherical $3$-manifolds with either orientations,  which admit rational homology ball symplectic fillings, by resolving  
the remaining case of ``oppositely oriented" dihedral-type spherical $3$-manifolds. 
\begin{theorem}\label{classifyfillings}
The only oriented spherical $3$-manifolds, equipped with some contact structures, which admit symplectic rational homology ball fillings are
\begin{enumerate}
\item $L(m^2, mh-1)$ for relatively prime $0<h<m$, with the two contact structures $\pm \xi_{can}$, or
\item $-D((n+1)m^2-mh+1, nm^2-mh+1)$ for relatively prime $0<h<m$ and $n \geq 2$, or $m=1$, $h=0$, and $n\geq 1$ with 
\begin{enumerate}
\item one of $6$ possible contact structures when $0<h<m$ and $n=2$,
\item one of $4$ possible contact structures when when $0<h<m$ and $n=1$ or $n>2$,
\item one of $3$ possible contact structures when $m=1, h=0$ and $n=1$,
\item one of $2$ possible contact structures when $m=1, h=0$ and $n>1$, or
\end{enumerate}
\item $-T_3=Y(-1;2/3, 1/2, 1/3)$ with one of $3$ possible contact structures,
\end{enumerate} 
where the contact structures are counted up to isotopy.
\end{theorem}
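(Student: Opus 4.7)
The plan is to combine the Choe--Park classification of spherical $3$-manifolds bounding smooth rational homology balls with the results proved earlier in this paper and in \cite{EtnyreOzbagciTosun2025}. Since a symplectic rational homology ball filling is in particular a smooth one, any candidate $Y$ must satisfy that $Y$ or $-Y$ appears in one of the three Choe--Park families (1)--(3), so I would treat each family in turn. For the lens space family, the result is immediate from \cite{EtnyreOzbagciTosun2025}: $L(p,q)$ admits a symplectic rational homology ball filling if and only if $L(p,q)\cong L(m^2,mh-1)$, and the fillable contact structures are exactly $\pm\xi_{\mathrm{can}}$, yielding case (1) of the classification.

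For the dihedral family, the prism manifolds $D(p,q)$ are small Seifert fibered spaces of the form $Y(e_0;1/2,s,1/2)$ with two complementary legs. The canonical (singularity-link) orientation has $e_0\le -2$, which is ruled out by Theorem~\ref{thm: smallSFSnotfillable}. For the opposite orientation $-D(p,q)$, one has $e_0\ge -1$: when $e_0\ge 0$, Proposition~\ref{dihedO} forbids any symplectic rational homology ball filling; when $e_0=-1$, I would apply Theorem~\ref{thm: minusone}. Verifying unique complementarity of the two $1/2$-legs is straightforward, since each such leg is a single unknot whose only initial subleg is itself, so a match with an initial subleg of the third leg would force a specific configuration that can be checked separately against the third-leg form. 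Theorem~\ref{thm: minusone} then forces the third leg parameter to equal $m^2/(nm^2-mh+1)$; converting these Seifert invariants into prism notation via the standard surgery calculus yields exactly $-D((n+1)m^2-mh+1,\,nm^2-mh+1)$, giving case (2). For the exceptional cases $T_3, T_{27}, I_{49}$, the canonical orientations are not lens spaces and are ruled out by \cite{EtnyreOzbagciTosun2025}; the opposite orientations $-T_{27}=Y(3;2/3,1/2,1/3)$ and $-I_{49}=Y(0;4/5,1/2,1/3)$ fall under Theorem~\ref{thm: nonneg} and fail the required form, as noted already in \cite{EtnyreOzbagciTosun2025}; and $-T_3$ is treated in Remark~\ref{extra}(2), producing case (3) with exactly three fillable contact structures.

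The remaining task is the precise enumeration in case (2). The lower bound in Theorem~\ref{thm: minusone} is not sharp for prism manifolds, where $b=2$ forces $\lfloor b/2\rfloor=1$, so I would proceed by combining the Ghiggini--Lisca--Stipsicz classification of tight contact structures on small Seifert fibered spaces with the necessary condition $\theta(\xi)=-2$ computed via the formulas of Section~\ref{subsec: theta}. Contact structures failing this condition are discarded, and for each surviving structure a symplectic rational homology ball filling is produced using the Lemma~4.5 constructions of \cite{EtnyreOzbagciTosun2025}, giving the upper bound. This matching should yield $6,4,3,$ or $2$ structures in sub-cases (a)--(d) respectively. The main obstacle I anticipate is this enumeration: the sub-case (2a) count of $6$ exceeds the generic number $4$ coming from the four inequivalent sign choices for the pair of complementary legs, so one must locate an extra pair of fillings analogous to the Legendrian-isotopy coincidences illustrated in Figure~\ref{fig:3QHB}, while in sub-cases (2c) and (2d) the collapse from $4$ to $3$ or $2$ structures must be accounted for by symmetries specific to the $m=1,h=0$ locus, in the spirit of the contactomorphism collapses observed in Remark~\ref{extra}(1). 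Distinguishing the surviving contact structures up to isotopy rather than merely contactomorphism will be done via the $\Gamma$-invariants of \cite{Gompf98} or Heegaard Floer contact invariants, exactly as in the analysis of Remark~\ref{extra}.
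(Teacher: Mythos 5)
Your overall skeleton (lens spaces via \cite{EtnyreOzbagciTosun2025}, prism manifolds as $Y(e_0;1/2,s,1/2)$ with the canonically oriented ones killed by Theorem~\ref{thm: smallSFSnotfillable}, the tetrahedral/icosahedral exceptions and $-T_3$ handled as in Remark~\ref{extra}) matches the paper, but there are two genuine gaps exactly where the new content of the theorem lies. First, you assert that for $e_0\geq 0$ Proposition~\ref{dihedO} \emph{forbids} any symplectic rational homology ball filling. It does not: it says such a filling exists if and only if $Y$ has the form in the first part of Theorem~\ref{thm: nonneg} and $\xi$ is balanced, and this $e_0\geq 0$ branch is precisely where the $n=1$, $0<h<m$ family comes from. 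The paper shows, via Lemmas~\ref{lem:unique-e0} and~\ref{lem:conversion}, that these manifolds are $-D(2m^2-mh+1,\,m^2-mh+1)$ and that Theorem~\ref{thm: nonneg} together with Proposition~\ref{dihedO} yields exactly four fillable contact structures on them. Your reading would delete this family from the classification (and contradicts your own later claim of four structures when $n=1$); Theorem~\ref{thm: minusone} at $e_0=-1$ only produces the $n\geq 2$ members $Y_{m,h,n}$.

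Second, your claim that unique complementarity of the two $(-2)$-legs is ``straightforward'' to verify is false exactly when $n=2$: there the third leg's continued fraction begins with a $2$, so $Y_{m,h,2}$ is \emph{not} uniquely complementary, Theorem~\ref{thm: minusone}/Theorem~\ref{thmMatkovic} cannot be quoted to list all fillable structures, and this failure is the actual source of the count $6$ in sub-case (2)(a). In the paper the two extra structures are not found through Legendrian-isotopy coincidences as in Figure~\ref{fig:3QHB}, as you speculate; they arise because the round-handle construction of Section~\ref{eogeqm1} applies to every sign pattern on the three $(-2)$-surgery curves whose first two signs are opposite ($3$ patterns, since $(+,+,-)$ and $(-,-,+)$ coincide by \cite{GhigginiLiscaStipsicz07}) combined with the $2$ choices $\pm\xi_{can}$ on the lens space, distinctness coming from \cite[Section~4]{GhigginiLiscaStipsicz07}, and all remaining tight structures are excluded by a $\theta$-invariant computation (non-integral $\theta$) in the style of, but easier than, Proposition~\ref{dihedO}. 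You explicitly defer this enumeration, and also the $n>2$ distinctness check (which the paper gets from \cite[Theorem~1.1]{GhigginiLiscaStipsicz07} rather than from $\Gamma$-invariants), so as written the proposal does not establish the counts in (2)(a) and (2)(b), which are the heart of the statement.
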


Since the class of spherical $3$-manifolds coincides with the class of closed orientable $3$-manifolds with finite fundamental group, we immediately obtain the following corollary.

\begin{corollary} A closed, oriented  $3$-manifold with finite fundamental group
admits at most six contact structures, up to isotopy, which are
symplectically fillable by rational homology balls.
\end{corollary}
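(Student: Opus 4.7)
The plan is to deduce the corollary as an immediate consequence of Theorem~\ref{classifyfillings}. The starting observation is the standard fact that the closed, orientable $3$-manifolds with finite fundamental group are precisely the spherical $3$-manifolds (this follows from the elliptization theorem, a component of the Geometrization Theorem, and is already invoked in the paragraph preceding the corollary). Hence, any such manifold $Y$, with whichever orientation we choose to place on it, falls into the classification provided by Theorem~\ref{classifyfillings} or else admits no symplectic rational homology ball filling at all.

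Next, I would go through the three cases listed in Theorem~\ref{classifyfillings} and simply read off the maximum number of isotopy classes of contact structures bounding a symplectic rational homology ball in each case. In case~(1), the lens spaces $L(m^2,mh-1)$ admit exactly the two contact structures $\pm\xi_{can}$, so the count is $2$. In case~(2), the dihedral-type spaces $-D((n+1)m^2-mh+1, nm^2-mh+1)$ admit at most $6$ contact structures, with the maximum attained in subcase~(2a) when $0<h<m$ and $n=2$, and otherwise at most $4$, $3$, or $2$ according to the subcases. In case~(3), the single manifold $-T_3$ admits exactly $3$ contact structures. Taking the maximum of $2$, $6$, and $3$ gives the bound of $6$ claimed in the corollary.

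Finally, I would remark that orientation does not need to be addressed separately: Theorem~\ref{classifyfillings} already classifies the relevant manifolds \emph{with either orientation} (the lens spaces in~(1) are symmetric in this regard, while in cases~(2) and~(3) only the stated orientation yields any symplectically fillable contact structure, since the opposite orientations fall into the classification of~\cite{EtnyreOzbagciTosun2025} where they do not appear). The only step requiring any subtlety is making sure that every spherical $3$-manifold with either orientation has indeed been accounted for in Theorem~\ref{classifyfillings}; this was the content of the discussion in Subsection~\ref{spherical} that preceded the theorem and need not be redone here.

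There is essentially no \emph{hard step}: the corollary is purely a bookkeeping consequence of the classification theorem. The only mild point is to recognize that ``closed oriented $3$-manifold with finite fundamental group'' is equivalent to ``spherical $3$-manifold,'' and thus the whole output of Theorem~\ref{classifyfillings} applies.
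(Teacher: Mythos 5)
Your proposal is correct and matches the paper's approach: the corollary is stated as an immediate consequence of Theorem~\ref{classifyfillings} together with the identification of spherical $3$-manifolds with closed orientable $3$-manifolds of finite fundamental group, and the bound of six is read off from case~(2)(a). The case-by-case bookkeeping you spell out is exactly what the paper leaves implicit.
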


\subsection{Final remarks and questions} 
Our work in \cite{EtnyreOzbagciTosun2025} and in this paper strongly indicates a positive answer to the following general conjecture.

\begin{conjecture} A Seifert fibered space $Y=Y(e_0; r_1, r_2,  \ldots, r_n)$, which is also a rational homology sphere,
carries a contact structure $\xi$ such that $(Y, \xi)$ admits rational homology ball symplectic filling if and only if $n \leq 4$ and either
\begin{enumerate} 
\item $e_0 \leq -2$, $Y$ is the link of a complex surface singularity, whose resolution graph belongs to a finite list of families provided by Bhupal and Stipsicz \cite[Figure 1 and Figure 2]{BhupalStipsicz11}, and $\pm \xi$ is the canonical contact structure,  or
\item $e_0 =-1$, $Y$ is a small Seifert fibered space with complementary legs as in Theorem 1.6, and $\xi$ is one of the balanced contact structures described in the proof of the theorem, or 
\item $e_0 \geq 0$, $Y$ is a small Seifert fibered space with complementary legs as in the first part of Theorem 1.7, and $\xi$ is one of the four balanced contact structures described in the theorem. 
\end{enumerate}
\end{conjecture}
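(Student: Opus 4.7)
The plan is to prove the two directions of the biconditional separately. For the ``if'' direction, one assembles explicit symplectic rational homology ball fillings: case (1) is provided by the Bhupal--Stipsicz constructions of \cite{BhupalStipsicz11}, while cases (2) and (3) are exhibited by the Stein handlebody fillings built in the proofs of Theorem~\ref{thm: minusone} and Theorem~\ref{thm: nonneg}; once these constructions are in hand, verifying that the resulting $4$-manifolds are rational homology balls is a direct homological check.

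The ``only if'' direction is the substantive part, and I would break it into four steps. First, reduce to $n \leq 4$ by showing that if $Y$ is a Seifert fibered rational homology sphere with at least five exceptional fibers, then every plane field on $Y$ has $\theta$-invariant strictly greater than $-2$, which rules out a symplectic rational homology ball filling by the standard $\theta = -2$ obstruction. The computation would extend the approach of Sections~\ref{m2case} and~\ref{subsec: theta}, writing $\theta$ as a sum of a central $e_0$-contribution plus one contribution per leg and exploiting that, as the number of legs grows, one cannot simultaneously satisfy the balancing needed to drive $\theta$ down to $-2$. Second, handle $n = 4$: if two of the four legs are complementary, use a symplectic cap or handle-slide argument to reduce to the small Seifert case already understood; otherwise rule out fillings by a direct $\theta$-computation analogous to the one in Section~\ref{subsec: theta}. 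Third, for $n = 3$ and $e_0 \leq -2$, combine Theorem~\ref{thm: smallSFSnotfillable} with \cite[Theorem~1.1]{EtnyreOzbagciTosun2025} and the Bhupal--Stipsicz classification to exhaust the remaining cases beyond the canonical contact structures on the Bhupal--Stipsicz list. Fourth, for $n = 3$ with $e_0 \geq -1$, remove the ``uniquely complementary'' hypothesis of Theorem~\ref{thm: minusone} by a refined bookkeeping of initial sublegs and $\spinc$-structures, and eliminate the non-balanced candidate contact structures appearing in the second paragraph of Theorem~\ref{thm: nonneg}, following the template of Proposition~\ref{dihedO}.

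The main obstacle will be the $n \geq 5$ case in the first step. For small Seifert fibered spaces the $\theta$-invariant decomposes in a way that makes the obstruction tractable, but for larger $n$ the Seifert data is much less constrained and simple monotonicity in the number of legs is not obvious; one may instead need a convexity or rearrangement argument that isolates the central contribution and treats the leg contributions uniformly, perhaps combined with a case analysis distinguishing the regimes $e_0 \leq -2$, $e_0 = -1$, and $e_0 \geq 0$. A secondary and possibly more concrete obstacle is ruling out the non-balanced candidates in Theorem~\ref{thm: nonneg}, where the candidate fillings pass all currently available homological obstructions and so a genuinely new technique (for example, a detailed analysis of the Ozsv\'ath--Szab\'o or embedded contact homology contact invariants, or direct symplectic topological constraints on the potential fillings) is likely required.
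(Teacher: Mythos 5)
The statement you are addressing is not a theorem of the paper at all: it appears in the final section as a \emph{Conjecture}, and the authors offer no proof, only the evidence supplied by Theorems~\ref{thm: smallSFSnotfillable}, \ref{thm: minusone}, \ref{thm: nonneg} and Proposition~\ref{dihedO}. Your submission is accordingly a research outline rather than a proof, and the gaps it leaves are exactly the open parts of the conjecture. Most seriously: (i) nothing in the paper or in \cite{EtnyreOzbagciTosun2025} treats Seifert fibered spaces with four or more singular fibers, and your proposed reduction via the $\theta$-invariant is both unproven and overstated --- the claim that \emph{every plane field} on a space with at least five legs has $\theta>-2$ cannot hold, since within a fixed $\spinc$-structure $\theta$ ranges over a coset of $4\Z$; at best one could hope to verify $\theta\neq -2$ for the tight contact structures, whose classification for $n\geq 4$ legs is itself incomplete, and even that would only give the necessary homotopy-theoretic obstruction. (ii) For $n=3$ and $e_0=-2$ your third step does not ``exhaust'' the remaining cases: Theorem~\ref{thm: smallSFSnotfillable} settles only the complementary-legs case (confirming Conjecture~1.5 of \cite{EtnyreOzbagciTosun2025} just in that setting), and $e_0=-2$ spaces without complementary legs and outside the Bhupal--Stipsicz list remain open. (iii) For $e_0=-1$ and $e_0\geq 0$ the conjecture also asserts non-fillability for small Seifert fibered spaces with \emph{no} complementary legs, which your plan never addresses; the arguments of Section~\ref{eogeqm1} lean on Matkovi\v c's fillability theorem, which is only available in the complementary-legs setting, and removing even the ``uniquely complementary'' hypothesis is flagged by the authors as requiring further investigation. (iv) As you yourself note, the non-balanced candidates in the second paragraph of Theorem~\ref{thm: nonneg} pass all known obstructions; the paper eliminates them only for the family $Y(e_0;\tfrac12,s,\tfrac12)$ in Proposition~\ref{dihedO}, and conjectures the rest.

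On the positive side, your ``if'' direction is essentially correct and matches what the paper already provides: the Bhupal--Stipsicz Milnor fillable examples for $e_0\leq -2$ and the Stein round-handle constructions of Sections~\ref{sfswcl}--\ref{eogeqm1} for $e_0\geq -1$ do produce the asserted symplectic rational homology ball fillings. But since each of the ``only if'' steps above is either open or only conjecturally resolved, the proposal should be presented as a strategy for attacking the conjecture, not as a proof of it.
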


We end with an interesting remark about orientations. As a consequence of our results in Theorem~\ref{thm: smallSFSnotfillable} and \ref{thm: minusone}, we obtain many examples of a small Seifert fibered space with no rational homology ball symplectic filling with either orientation, as well as many that have rational homology ball fillings with one orientation. We are not aware of a 3-manifold, other than $S^3$, that has a rational homology ball symplectic filling with both orientations. So we ask:

\begin{question} Suppose $Y$ is a closed, connected and oriented  3-manifold. Is it true that if $Y$ and $-Y$ both admit rational homology ball symplectic fillings, then $Y=S^3$?
\end{question}

\vspace{.2in}
\noindent
{\bf Acknowledgments:} We thank an anonymous referee for improvements to the paper. The first author was partially supported by National Science Foundation grant DMS-2203312 and the Georgia Institute of Technology's Elaine M. Hubbard Distinguished Faculty Award. The third author was supported in part by grants from the National Science Foundation (DMS-2105525 and CAREER DMS 2144363) and the Simons Foundation (636841, BT and 2023 Simons Fellowship). He also acknowledges the support by the Charles Simonyi Endowment at the Institute for Advanced Study.

\section{Small Seifert fibered spaces with complementary legs} \label{sfswcl}
Recall the Seifert fibered space $Y(e_0;r_1,r_2,r_3)$ has complementary legs if there are distinct $i$ and $j$ such that $r_i+r_j=1$. We will order the singular fibers such that $r_1+r_3=1$. In this section, we will consider the smooth topology and the contact topology of such Seifert fibered spaces.
\subsection{Smooth Seifert fibered spaces with complementary legs}
We begin by describing convenient surgery diagrams for small Seifert fibered spaces with complementary legs.
\begin{lemma}\label{gencomplementarylegs}
Any small Seifert fibered space with complementary legs is obtained from $S^1\times S^2$ by Dehn surgery on a regular fiber of some Seifert fibration of $S^1\times S^2$ and has a surgery diagram given in Figure~\ref{fig:balanced}. Here  $a/b, p/q>1$, $a/b=[a^1_1,\ldots, a^1_{n_1}]$, $p/q=[a^2_1,\ldots, a^2_{n_2}]$, and $a/(a-b)=[a^3_1,\ldots, a^3_{n_3}]$, all the $a_i^j\geq 2$, and $n\in \Z$. Moreover, any such Seifert fibered space is a rational homology sphere. 
\end{lemma}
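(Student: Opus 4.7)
The rational homology sphere assertion follows immediately from computing the Seifert Euler number: $e = e_0 + r_1 + r_2 + r_3 = e_0 + 1 + r_2$, which is nonzero since $e_0 \in \Z$ and $r_2 \in (0,1) \cap \Q$. Hence $b_1(Y) = 0$ and $Y$ is a rational homology sphere.

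The main structural assertion is driven by the geometric observation that the Seifert fibered space $Y(-1; r_1, 1-r_1)$ has Euler number zero and is therefore $S^1 \times S^2$, equipped with a Seifert fibration over $S^2$ with two exceptional fibers of complementary slopes. Consequently, $Y(e_0; r_1, r_2, 1-r_1)$ can be realized by performing a single Dehn surgery on a regular fiber of this $S^1 \times S^2$ fibration: the surgery both introduces a third exceptional fiber of slope $r_2$ and shifts the Seifert section so that the resulting central Euler number is $e_0$ rather than $-1$. This shift accounts for the integer $n$ appearing in Figure~\ref{fig:balanced}.

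To realize this picture diagrammatically, I would present $S^1 \times S^2$ as $0$-surgery on an unknot in $S^3$, with the regular fiber appearing as the meridian. Writing $r_1 = b/a$, the two complementary exceptional fibers are encoded by chains of integer-framed unknots realizing the continued fractions $a/b = [a^1_1, \ldots, a^1_{n_1}]$ and $a/(a-b) = [a^3_1, \ldots, a^3_{n_3}]$, while the Dehn surgery on the regular fiber is encoded by the chain realizing $p/q = [a^2_1, \ldots, a^2_{n_2}]$; this is precisely Figure~\ref{fig:balanced}. The rigorous verification that Figure~\ref{fig:small} is equivalent to Figure~\ref{fig:balanced} then proceeds by Kirby calculus: expand the three rational framings in Figure~\ref{fig:small} via inverse slam-dunks, and perform a sequence of handle slides and Rolfsen twists that absorb the central $e_0$-framed unknot into the $0$-framed unknot of Figure~\ref{fig:balanced}. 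The combinatorial engine that makes these moves terminate is the duality between the continued fractions of $a/b$ and $a/(a-b)$, which is the continued-fraction incarnation of $r_1 + r_3 = 1$. The main technical obstacle is the careful bookkeeping of framings and linking numbers through these moves, together with tracking the precise integer $n$ that emerges, but the outcome is forced by the complementary condition.
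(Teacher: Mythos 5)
Your treatment of the rational homology sphere claim and of the first assertion is essentially the paper's: the paper reaches the ``surgery on a regular fiber'' statement by Rolfsen twists (turning the $-a/(a-b)$ leg into an $a/b$ leg, then normalizing the central coefficient to $0$) together with a citation of \cite[Lemma~4.1]{EtnyreOzbagciTosun2025}, while you argue intrinsically that $Y(-1;r_1,1-r_1)$ is a Seifert fibered space over $S^2$ with at most two exceptional fibers and Euler number zero, hence $S^1\times S^2$, and that drilling the $r_2$-fiber identifies $Y$ with a Dehn filling of the regular-fiber complement; both routes are fine, and your Euler number computation for the last claim is exactly the paper's.

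The gap is in the second assertion, the specific normal form of Figure~\ref{fig:balanced}. That figure has no $0$-framed unknot: its central curve is $(-1)$-framed, the two complementary chains for $a/b$ and $a/(a-b)$ hang off it, and the third leg is the chain with framings $-n,-a^2_1,\ldots,-a^2_{n_2}$, where the leading integer $n\in\Z$ is unconstrained and carries all of the dependence on $e_0$ (see the remark following the lemma). The diagram you describe --- a central $0$-framed unknot with meridional chains realizing $a/b$, $p/q$, and $a/(a-b)$ --- has its Seifert invariants, and hence its $e_0$, completely determined once the legs are fixed, so it cannot be ``precisely Figure~\ref{fig:balanced}'' for a general small Seifert fibered space with complementary legs; likewise there is no ``$0$-framed unknot of Figure~\ref{fig:balanced}'' into which the $e_0$-framed curve can be absorbed. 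What you defer as bookkeeping is in fact the content of the claim: after the Rolfsen twists one must check that, with the central framing pinned at $-1$, the remaining coefficient on the regular fiber expands as a chain $-n,-a^2_1,\ldots,-a^2_{n_2}$ with all $a^2_i\geq 2$ and $n\in\Z$ arbitrary; this is the short computation the paper actually performs, and this precise normal form (in particular the identification of the $(-n)$-framed component as the regular-fiber surgery curve) is what Lemma~\ref{qhc}, Proposition~\ref{SteinCobord}, and Theorem~\ref{maine00} later rely on, so it cannot be waved off as ``forced by the complementary condition.'' A smaller imprecision: the meridian of the $0$-framed unknot is a regular fiber only of the Seifert fibration whose exceptional fibers are given by the two complementary chains already present in the picture; for $0$-surgery on the unknot alone it is a fiber of the product fibration, and surgery on such a fiber cannot produce three exceptional fibers.
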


\begin{figure}[htb]{
\begin{overpic}
{maincomplementarylegs}
\put(103, 77){$-a_1^2$}
\put(5, 77){$-a^2_{n_2}$}
\put(138, 82){$-n$}
\put(165, 10){$-1$}
\put(220, 105){$-a^1_1$}
\put(319, 105){$-a^1_{n_1}$}
\put(220, -5){$-a^3_1$}
\put(319, -5){$-a^3_{n_3}$}
\end{overpic}}
 \caption{Surgery diagram for a small Seifert fibered space with complementary legs. Here $a/b>1$, $a/b=[a^1_1,\ldots, a^1_{n_1}]$, $p/q=[a^2_1,\ldots, a^2_{n_2}]$, and $a/(a-b)=[a^3_1,\ldots, a^3_{n_3}]$. }
  \label{fig:balanced}
\end{figure}
\begin{remark}

We note that the Seifert fibered space in Figure~\ref{fig:balanced} is not in normalized form if $n$ is not less than $-1$. Using Rolfsen twists, one may easily show 
\[
\begin{array}{l l}
e_0\geq 0 &\text{ if } n=1\\
e_0=-1 &\text{ if } n\geq 2\\
e_0=-2 &\text{ if } n\leq -1\\
e_0\leq -3 &\text{ if } n=0.
\end{array}
\]
In fact, if $-n-q/p$ is in the interval $(-1/{k}, -1/(k+1))$ for some integer $k \geq 1$ (which requires that $n=1$), then $e_0=k-1$ and if $-n-q/p$ is in $(1/(k+1), 1/k)$ for some integer $k \geq 1$ (which requires that $n=0$), then $e_0=-k-2$.
\end{remark}

\begin{proof}
Given a small Seifert fibered space $Y=Y(e_0; r_1, r_2, r_3)$ in normalized form, we see it has complementary legs if $-1/r_1=-a/b$ and $-1/r_3= -a/(a-b)$ and both quantities will be less than $-1$. We can perform a Rolfsen twist on the singular fiber $-a/(a-b)$ so that the singular fiber is now a singular fiber labeled $a/b$, and the $e_0$ term has been increased by $1$. 
\begin{figure}[htb]{
\begin{overpic}
{s1s2}
\put(24, 111){$a/b$}
\put(148, 111){$-a/b$}
\put(210, 140){\color{dred}$F$}
\put(35, 67){$0$}
\end{overpic}}
 \caption{A surgery diagram for $S^1\times S^2$ with a Seifert fibered structure where $F$ is a regular fiber.}
  \label{fig:s1s2}
\end{figure}
We may now perform Rolfsen twists on the second singular fiber so that the central curve has surgery coefficient $0$. If we remove the second singular fiber from the surgery diagram for $Y$, then the resulting manifold is $S^1\times S^2$, and the curve that one would surger to obtain $Y$ is a regular fiber in a Seifert fibered structure on $S^1\times S^2$. If this is not clear, see \cite[Lemma~4.1]{EtnyreOzbagciTosun2025}. So $Y$ is obtained from Figure~\ref{fig:s1s2} by Dehn surgery on $F$. This establishes the first claim of the lemma. We note for future reference that the framing of $F$ given by the Heegaard torus on which $F$ sits agrees with the $0$-framing in the figure. 

For the second claim, we note that we can undo the Rolfsen twist on the $a/b$ fiber to get back to a $-a/(a-b)$ fiber and now the central curve has coefficient $-1$. Moreover, a continued fraction expansion of the surgery coefficient will be of the form 
\[
-n-\frac{1}{[-a_1^2,\ldots, -a_{n_2}^2]},
\] 
for some integer $n$ and integers $a_i^2\geq 2$. This establishes the second claim of the lemma. 

Finally, recall that a small Seifert fibered space $Y(e_0; r_1, r_2, r_3)$ in normal form is a rational homology sphere if and only if $e_0+r_1+r_2+r_3\not=0$. Note that 
$e_0 \in \Z$, and  $r_i\in(0,1)$, by definition. The last claim of the lemma follows since we have $r_1+r_3=1$ in our case. 
\end{proof}

We characterize when a small Seifert fibered space with complementary legs bounds a rational homology ball, but first recall the notion of a normal sum \cite{Gompf1995}. Suppose $K_i$ is a knot in some closed $3$-manifold $Y_i$,  for $i=0,1$. The \dfn{normal sum of $Y_0$ and $Y_1$ along $K_0$ and $K_1$} is the result of removing neighborhoods of $K_i$ from $Y_i$ and identifying the resulting boundaries by a diffeomorphism that sends a longitude of $K_0$ to a longitude of $K_1$ (thus the result depends on a framing on the $K_i$) and the meridian of $K_0$ to the meridian of $K_1$. One can effect a normal sum by taking the connected sum of $Y_0$ and $Y_1$ on a neighborhood of a point on $K_0$ and $K_1$. Then in $Y_0\# Y_1$ we have the connected sum of $K_0$ and $K_1$. The normal sum is then the result of Dehn surgery on $K_0\# K_1$ with framing determined by the framings on the $K_i$.

\begin{proposition}\label{bound} 
A small Seifert fibered space $Y$ with complementary legs bounds a rational homology ball if and only if $Y$ is obtained by normal summing a lens space $L(p,q)$ that bounds a rational homology ball and $S^1\times S^2$ along the core of a Heegaard torus in the lens space and the fiber of a Seifert fibration on $S^1\times S^2$. In particular, $Y$  will be given by the surgery diagram in Figure~\ref{fig:balanced}, where $p/q=[a^2_1,\ldots, a^2_{n_2}]>1$ is in $\mathcal{R}$ and $n\in\Z$.
\end{proposition}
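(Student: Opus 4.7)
The strategy is to realize the Dehn surgery description of $Y$ coming from Lemma~\ref{gencomplementarylegs} as a normal sum of a lens space with $S^1 \times S^2$, and then transport the rational homology ball filling across this identification in both directions. By Lemma~\ref{gencomplementarylegs}, $Y$ is Dehn surgery on a regular fiber $F$ of a Seifert fibration of $S^1\times S^2$; reading the surgery diagram in Figure~\ref{fig:balanced}, the surgery coefficient on $F$ (with respect to the Heegaard torus framing of Figure~\ref{fig:s1s2}) is $-n - q/p$, where $p/q = [a^2_1,\ldots,a^2_{n_2}]$. I would first verify by a Kirby calculus computation that this surgery coincides with the normal sum of $(L(p,q), C)$ and $(S^1\times S^2, F)$, where $C$ is the core of a Heegaard solid torus in $L(p,q)$ and the normal sum framings are the natural Heegaard torus framings (with the integer ambiguity in the choice of longitude $\ell_C$ absorbing the parameter $n$). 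Concretely, $L(p,q)\setminus \nu(C)= S^3\setminus \nu(U)$ is a solid torus whose meridian is $\ell_U$; expanding $\ell_U$ in the $(m_C,\ell_C)$ basis and gluing via the normal sum identification produces exactly the surgery coefficient matching Figure~\ref{fig:balanced}.

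For the ``if'' direction, assume $L(p,q)$ bounds a rational homology ball $W_0$. Since $S^1 \times S^2$ bounds the rational homology ball $S^1 \times D^3$, the boundary sum $W := W_0 \natural (S^1\times D^3)$ is a rational homology ball with boundary $L(p,q)\#(S^1\times S^2)$. Attaching a $2$-handle to $W$ along $C\#F$ with the normal sum framing yields a $4$-manifold $W'$ with $\partial W' = Y$. The class $[C\#F]$ generates $H_1(W;\Q)=\Q$, since $C$ is torsion in $L(p,q)$ while $F$ generates the rational first homology of $S^1\times D^3$. Hence the long exact sequence of the pair $(W',W)$ shows that the $2$-handle kills $H_1$ rationally without contributing to $H_2$, so $W'$ is a rational homology ball bounding $Y$.

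For the ``only if'' direction, I would invoke Theorem~\ref{lecuonas}: $Y$ bounds a smooth rational homology ball if and only if $p/q=[a^2_1,\ldots,a^2_{n_2}]\in \mathcal{R}$. By Lisca's theorem recalled in the introduction, $p/q\in\mathcal{R}$ is precisely the condition for $L(p,q)$ to bound a rational homology ball. Combined with the normal sum identification from the first paragraph, this shows that whenever $Y$ bounds a rational homology ball, it arises as the prescribed normal sum of such an $L(p,q)$ with $S^1 \times S^2$. The main technical obstacle is the Kirby calculus identification in the first step: carefully matching framing conventions on the Heegaard tori of $L(p,q)$ and $S^1\times S^2$, and ensuring that the integer ambiguity in the choice of longitude $\ell_C$ accommodates the parameter $n$ appearing in Figure~\ref{fig:balanced}.
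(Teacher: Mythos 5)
Your ``if'' direction is essentially the paper's argument. The paper packages the $1$-handle-plus-$2$-handle construction you describe as a round $1$-handle attached to $(L(p,q)\times[0,1])\sqcup(S^1\times D^3)$ along the core of a Heegaard torus and a regular Seifert fiber, checks via the surgery diagram that the $(-n)$-framed $2$-handle along $C\# F$ completes it to the manifold of Figure~\ref{fig:balanced} (this is the Kirby verification you defer, carried out in Lemmas~\ref{gencomplementarylegs} and~\ref{qhc}), and observes, just as you do homologically, that a $2$-handle running rationally nontrivially over the $1$-handle makes the cobordism a rational homology cobordism. One slip: $S^1\times D^3$ and $W=W_0\natural(S^1\times D^3)$ are \emph{not} rational homology balls (each has $b_1=1$); your subsequent computation with $H_1(W;\Q)=\Q$ being killed by the $2$-handle is the correct statement and is what actually carries the argument, so this is terminology rather than substance.

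Where you genuinely diverge is the ``only if'' direction, and there your route has a caveat. The paper does not invoke Theorem~\ref{lecuonas} at all: it reuses the same rational homology cobordism turned upside down, glues it to a hypothetical rational homology ball for $Y$ to produce a rational homology ball with boundary $L(p,q)$, and then Lisca's theorem forces $p/q\in\mathcal{R}$. This makes the proof self-contained, and indeed the paper derives Theorem~\ref{lecuonas} as an immediate consequence of this very proposition --- so, as written, your appeal to Theorem~\ref{lecuonas} is circular within the paper's logical structure. It is repairable by citing Lecuona's original Corollary~3.3 instead, but then you must supply the translation the paper spends a page on: her statement is given only up to orientation reversal and in terms of the diagram of Figure~\ref{fig:lecuona}, so one needs the observation $e_0(Y)+e_0(-Y)=-3$ together with the blow-up/blow-down equivalences relating Figures~\ref{fig:balanced} and~\ref{fig:lecuona}, none of which your sketch addresses. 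In short, your approach outsources the hard direction to Lecuona, while the paper's reversed-cobordism argument buys a self-contained proof (reproving Theorem~\ref{lecuonas}) whose cobordism is then reused in the symplectic arguments of Section~3.
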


Before setting the stage for the proof we observe that Theorem~\ref{lecuonas} is now an immediate consequence of Proposition~\ref{bound}.

The classification of small Seifert fibered spaces with complementary legs
that bound rational homology balls was established by Lecuona in
\cite[Corollary~3.3]{Lecuona2019}. The notation in \cite{Lecuona2019}
differs from the notation used here, and the statements there are made
only up to (possible) reversal of orientation, whereas our result applies
to either orientation. Accordingly, we briefly recall the classification
in \cite[Corollary~3.3]{Lecuona2019} and explain how it translates into
our conventions. In particular, \cite[Corollary~3.3]{Lecuona2019} asserts
that a small Seifert fibered space with complementary legs bounds a rational
homology ball if and only if it is homeomorphic, up to orientation, to
one of the manifolds described by the surgery diagram in
Figure~\ref{fig:lecuona}.

\begin{figure}[htb]{
\begin{overpic}
{lecuona}
\put(93, 23){$-a_t^2$}
\put(5, 23){$-a^2_{n_2}$}
\put(120, 77){$-2$}
\put(194, 77){$-2$}
\put(213, 89){$-2$}
\put(189, -1){$t$} 
\put(273, 105){$-a^1_1$}
\put(371, 105){$-a^1_{n_1}$}
\put(273, -5){$-a^3_1$}
\put(371, -5){$-a^3_{n_3}$}
\end{overpic}}
 \caption{Surgery diagram for a small Seifert fibered space with complementary legs that bounds a rational homology ball from Lecuona's paper \cite{Lecuona2019}. Here $a/b>1$, $a/b=[a^1_1,\ldots, a^1_{n_1}]$, and $a/(a-b)=[a^3_1,\ldots, a^3_{n_3}]$, where $0\leq t\leq n_2$, $a_t^2>2$, and  $[a_t^2-1, a_{t+1}^2,\ldots, a_{n_2}^2]$ is in $\mathcal{R}$.}
  \label{fig:lecuona}
\end{figure}
 
We first observe that the small Seifert fibered spaces appearing in
Figure~\ref{fig:lecuona} all satisfy $e_0 \leq -2$. Recall that for a small Seifert fibered space $Y$, one has $e_0(Y)+e_0(-Y)=-3$. It follows that for  any such $Y$, either $Y$ or $-Y$ has $e_0\leq -2$ and moreover $Y$ bounds a rational homology ball if and only if $-Y$ does. Therefore,  when determining
whether a given small Seifert fibered space bounds a rational homology
ball, we may always choose the orientation so that $e_0 \leq -2$.  
Consequently, Figure~\ref{fig:lecuona} indeed accounts for all 
small Seifert fibered spaces with complementary legs, up to orientation, that bound rational homology balls.  However, since our
goal is to study contact structures on these spaces, it is essential to keep track of
orientations. For this reason, we will instead work with the
orientation-sensitive diagrams shown in Figure~\ref{fig:balanced}.

We will now see that when  $e_0\leq -2$, the surgery diagrams depicted in Figures~\ref{fig:balanced} and~\ref{fig:lecuona} are in fact equivalent. We begin with the case of $e_0=-2$, and so in Figure~\ref{fig:balanced}, we have $n\leq -1$. We will focus on the central unknot and the chain to its left, as the rest of the diagram is unchanged. The equivalence is shown in Figure~\ref{fig:firstequivalence}. 
\begin{figure}[htb]{
\begin{overpic}
{firstequivalence}
\put(139, 278){$-a_1^2$}
\put(43, 278){$-a^2_{n_2}$}
\put(175, 284){$-n$}
\put(256, 285){$-1$}
\put(131, 182){$-a_1^2$}
\put(29, 182){$-a^2_{n_2}$}
\put(156, 190){$-n-1$}
\put(274, 190){$-2$}
\put(197, 182){$-1$}
\put(90, 75){$-a_1^2$}
\put(4, 75){$-a^2_{n_2}$}
\put(130, 82){$1$}
\put(301, 85){$-2$}
\put(153, 75){$-1$}
\put(175, 75){$-2$}
\put(228, 75){$-2$}
\put(216, -1){$-n$}
\end{overpic}}
 \caption{The equivalence between Figures~\ref{fig:balanced} and~\ref{fig:lecuona} when $e_0=-2$. Note that here $n \leq -1$. }
  \label{fig:firstequivalence}
\end{figure} 
The top row shows the chain mentioned above. A blow-up has been performed between the $-n$ and $-1$-framed components to obtain the second row. The last row is obtained by continued blow-ups between the $-1$-framed knot and what has become of the $(-n)$-framed knot under the blow-ups. Now one blows down the $1$-framed unknot to obtain the left-most chain in Figure~\ref{fig:lecuona} with $t=-n$, and $-a_t^2$ replaced with $-a_1^2-1$ and the other indices on the $a^2_j$ shifted. 
 
We now show  the equivalence between Figures~\ref{fig:balanced} and~\ref{fig:lecuona} when $e_0<-2$. In this case, we will have $n=0$ in Figure~\ref{fig:balanced}. Sliding the $-a_1^2$-framed unknot over the $-1$-framed unknot in Figure~\ref{fig:balanced} will result in Figure~\ref{fig:secondequivalence}.
\begin{figure}[htb]{
\begin{overpic}
{secondequivalence}
\put(90, 76){$-a_1^2-1$}
\put(5, 76){$-a^2_{n_2}$}
\put(145, 82){$0$}
\put(189.5, 9){$-1$}
\put(182, 76){$-1$}
\put(220, 105){$-a^1_1$}
\put(319, 105){$-a^1_{n_1}$}
\put(220, -6){$-a^3_1$}
\put(319, -6){$-a^3_{n_3}$}
\end{overpic}}
 \caption{The equivalence between Figures~\ref{fig:balanced} and~\ref{fig:lecuona} when $e_0<-2$.}
  \label{fig:secondequivalence}
\end{figure}
One may now easily see that the $0$-famed unknot links the $-1$-framed unknot as a meridian and does not link any other component in the diagram. Thus, a sequence of handle slides will disentangle these two components from the rest of the diagram, and they can be deleted. This results in Figure~\ref{fig:lecuona} with $t=0$ and $-a_t^2$ replaced by $-a_1^2-1$ as desired. 

We note that after the above discussion, Proposition~\ref{bound} follows from \cite[Corollary~3.3]{Lecuona2019}, but we give a proof here as we will need the ideas in our discussion of symplectic rational homology balls below. We begin with a lemma that is equivalent to Proposition~3.1 in \cite{Lecuona2019}.
\begin{lemma}\label{qhc}  There is a rational homology cobordism   from the lens space $L(p,q)$ with $p/q=[a^2_1,\ldots, a^2_{n_2}]>1$ to the Seifert fibered space shown in Figure~\ref{fig:balanced}.
\end{lemma}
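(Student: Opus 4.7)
The plan is to construct the cobordism $W$ explicitly by exhibiting $Y$ as the result of attaching a single $2$-handle to $L(p,q) \# (S^1 \times S^2)$, and then prepending a $1$-handle that connects $L(p,q)$ to this connected-sum manifold. The key observation is that the central $-n$-framed $2$-handle in the plumbing $P$ of Figure~\ref{fig:balanced} separates two unlinked sub-handlebodies of $P$: the linear chain $P(p,q)$ on the left chain $[-a^2_1, \ldots, -a^2_{n_2}]$, whose boundary is $L(p,q)$, and the plumbing $P_R$ built from the $-1$-framed unknot together with the two right chains. A Kirby calculus computation shows $\partial P_R = S^1 \times S^2$: slam-dunking the top and bottom chains into the $-1$-framed unknot changes its framing to $-1 + b/a + (a-b)/a = 0$, and $0$-surgery on the unknot yields $S^1 \times S^2$. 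Since $P(p,q)$ and $P_R$ attach to a common $B^4$ along disjoint pieces of $S^3$, deleting the central $2$-handle from $P$ yields the boundary-connected sum $P(p,q)\,\natural\, P_R$, with boundary $L(p,q) \# (S^1 \times S^2)$. Re-attaching the $-n$-framed central $2$-handle then exhibits $Y$ as a single $2$-handle attachment along a knot $K \subset L(p,q) \# (S^1 \times S^2)$ with framing $-n$.

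With this picture in hand, I would define
\[
W = \bigl(L(p,q) \times [0,1]\bigr) \cup h^1 \cup h^2,
\]
where $h^1$ is a $1$-handle attached to $L(p,q) \times \{1\}$ (producing $L(p,q) \# (S^1 \times S^2)$ on the upper boundary), and $h^2$ is the $2$-handle attached along $K$ with framing $-n$ (producing $Y$). To verify that $W$ is a rational homology cobordism, I would use the relative cellular chain complex of $(W, L(p,q))$: it has one $1$-cell $\alpha$ (from $h^1$) and one $2$-cell $\beta$ (from $h^2$), with $\partial \alpha = 0$ and $\partial \beta = \pm \alpha$. The latter equation holds because the central $-n$-framed unknot links the $-1$-framed unknot exactly once in Figure~\ref{fig:balanced}; after the slam-dunks above, the $-1$-framed unknot becomes a $0$-framed unknot whose cocore disk in $P_R$ caps off to the essential $S^2$-factor sphere of $S^1 \times S^2 = \partial P_R$, so $K$ meets that sphere (equivalently, the cocore of $h^1$) algebraically once.

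It follows that $H_*(W, L(p,q); \Q) = 0$, and the long exact sequence of the pair gives $H_*(L(p,q); \Q) \cong H_*(W; \Q)$. A parallel argument for the pair $(W, Y)$ — using Lefschetz duality $H_k(W, Y; \Q) \cong H^{4-k}(W, L(p,q); \Q) = 0$ — shows $H_*(Y; \Q) \cong H_*(W; \Q)$, so both boundary inclusions induce rational homology isomorphisms and $W$ is the desired rational homology cobordism. The main step requiring care is the Kirby computation identifying $\partial P_R = S^1 \times S^2$ together with the geometric identification of $[K]$ as a generator of $H_1(S^1 \times S^2) = \Z$; once those are in place, the homology verification is routine relative-chain-complex bookkeeping.
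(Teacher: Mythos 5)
Your proposal is correct and is essentially the paper's own argument: the paper builds the same cobordism by attaching a round $1$-handle (equivalently, a $1$-handle followed by a $2$-handle running over it algebraically once) to $L(p,q)\times[0,1]$, identifying the upper boundary with Figure~\ref{fig:balanced} exactly as you do by removing and re-attaching the $(-n)$-framed handle. Your Kirby-calculus check that the star-shaped piece bounds $S^1\times S^2$ and your explicit relative chain complex computation just make more detailed what the paper summarizes in its final sentence.
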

Recall a cobordism $X$ form $Y_0$ to $Y_1$ is a rational homology cobordism if the relative homology $H_k(X,Y_i;\Q)=0$ for all $k$. (If this is true for one $i$ then it is true for the other $i$ as well.)
\begin{proof}
Given a $4$-manifold $X$ with boundary, we attach a \dfn{round $1$-handle} to $X$ as follows. The handle is $H=(D^1\times D^2)\times S^1$ and it is attached to $\partial X$ by an embedding $(\partial D^1)\times D^2\times S^1\to \partial X$. That is, it is attached along the neighborhood of two knots in $\partial X$ and the attaching map  is determined by framings on the knots. If $X'$ is the result of this attachment, then we note that $\partial X'$ is obtained from $\partial X$ by a normal sum along the attaching knots. We also note that a round $1$-handle attachment can be effected by attaching a $1$-handle to $\partial X$ so that the attaching sphere is two points, one on each knot, and then attaching a $2$-handle along the knot formed by the connected sum of the knots after the $1$-handle attachment. 

Now consider $X=(L(p,q)\times [0,1])\cup (S^1\times D^3)$. We will attach a round $1$-handle to this disjoint union along the core of a Heegaard torus in $L(p,q)\times \{1\}$ and a regular fiber in a Seifert fibered structure on $S^1\times S^2$. This will give us a cobordism $X'$ from $L(p,q)\#S^1\times S^2$ to the Seifert fibered space shown in Figure~\ref{fig:balanced}. Indeed, after attaching the $1$-handle, we see the manifold in Figure~\ref{fig:balanced} without the $(-n)$-framed $2$-handle. The attaching knot of the  $(-n)$-framed $2$-handle is exactly the connected sum of a core of a Heegaard torus in $L(p,q)$ and a regular fiber in a Seifert fibration of $S^1\times S^2$. Thus, attaching the $(-n)$-framed $2$-handle completes the round $1$-handle attachment. 

Notice that $X'$ is also the result of attaching a $1$-handle to $L(p,q)\times [0,1]$ and then a $2$-handle that non-trivially runs over that $1$-handle. Hence $X'$ is clearly a rational homology cobordism.
\end{proof}

\begin{proof}[Proof of Proposition~\ref{bound}]
If the small Seifert fibered space $Y$ with complementary legs is obtained from a lens space $L(p,q)$ bounding a rational homology ball as stated in the proposition, then we can glue the cobordism built in Lemma~\ref{qhc} to $L(p,q)$ to obtain a rational homology ball with boundary $Y$. On the other hand, if $Y$ bounds a rational homology ball, then we may glue the cobordism from Lemma~\ref{qhc} (turned upside down) to this rational homology ball to build a rational homology ball with boundary $L(p,q)$. 
\end{proof}

\subsection{Contact structures on some small Seifert fibered spaces}\label{contactcomplementary}
We would now like to see various ways of understanding contact structures on small Seifert fibered spaces with complementary legs so that we can relate notation used in other papers with our perspective here. We start by discussing the standard tight contact structure on $S^1\times S^2$ and then discuss how to change contact surgery descriptions of contact structures on Seifert fibered spaces. We then discuss tight contact structures on small Seifert fibered spaces with a zero twisting Legendrian fiber. 

We begin by considering the standard contact structure on $S^1\times S^2$. One can think of $S^1\times S^2$ as $T^2\times [0,1]$ with curves of slope $\infty$ on $T^2\times\{i\}$, for $i=0,1$, collapsed to points. The contact structure is then $\ker (\cos(\pi t)\, d\theta + \sin(\pi t)\, d\phi)$. Notice that on $T^2\times \{i\}$ the characteristic foliation has slope $\infty$, and thus, we may perform a contact cut, \cite{Lerman01}, to get a contact structure on $S^1\times S^2$. This contact structure is the standard tight contact structure $\xi_{std}$. Notice that $T^2\times \{1/2\}$ is a Heegaard torus in $S^1\times S^2$, and it has a linear characteristic foliation of slope $0$. We can perturb this torus to be convex with two dividing curves, denote it by $T$. Then $T$ splits $S^1\times S^2$ into two solid tori $V_1$ and $V_3$, and each $V_i$ is a neighborhood of a Legendrian knot $L_i$. Moreover, one of the Legendrian dividing curves $L_2$ is also a Legendrian knot and Legendrian isotopic to both $L_1$ and $L_3$. (This is easily seen in the standard model of a Legendrian knot, \cite{EtnyreHonda01b}.) It is clear that the contact planes along each $L_i$ twist $0$ times with respect to the framing given by the product structure. The $L_i$ are shown in Figure~\ref{fig:s1s2surgery}. 
\begin{figure}[htb]{
\begin{overpic}
{s1s2surgery}
\put(150, 22){$L_1$}
\put(150, 44){$L_2$}
\put(150, 66){$L_3$}
\end{overpic}}
 \caption{The Legendrian knots $L_1,L_2,$ and $L_3$ in $S^1\times S^2$.}
  \label{fig:s1s2surgery}
\end{figure}

Let $N_i$ be a standard neighborhood of $L_i$ and assume that all the $N_i$ are disjoint. Notice that $C_2=(S^1\times S^2)\setminus (N_1\cup N_3)$ is a thickened torus, $T^2\times I$, with an $I$-invariant contact structure on it. Thus, performing contact surgery on $L_1$ and $L_3$ is equivalent to taking the surgery tori and gluing them together along their boundary. Recall that the contact structure on a solid torus with meridional slope $m$ and dividing slope $d$ is determined by a minimal path in the Farey graph from $m$ clockwise to $d$ whose edges (except for the first) are decorated with a sign. As noted above $-a/b$ surgery on $L_1$ and $a/b$ surgery on $L_3$ result in $S^1\times S^2$. (We note that since the contact framing on $L_i$ is $0$, the smooth surgery coefficients and the contact surgery coefficients are the same.) We now recall Lemma~4.2 from \cite{EtnyreOzbagciTosun2025}. After \cite{EtnyreOzbagciTosun2025} was released the authors found out that the lemma had previously appeared in Matkovi\v c's work \cite{Matkovic23}.
\begin{lemma}\label{balanced}
Consider contact $(-a/b)$-surgery on $L_1$ and contact $(a/b)$-surgery on $L_3$. If the signs associated to the first surgery are the same and the signs for the second surgery are all opposite those of the first, then the resulting contact structure is $\xi_{std}$ on $S^1\times S^2$. Otherwise, the contact structure is overtwisted. 
\end{lemma}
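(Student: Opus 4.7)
The strategy is to apply Honda's classification of tight contact structures on solid tori and $T^2\times I$, together with the uniqueness of the tight contact structure $\xi_{std}$ on $S^1\times S^2$. Decompose $(S^1\times S^2,\xi_{std}) = N_1\cup C_2\cup N_3$, where $N_i$ is the standard tight neighborhood of $L_i$ with convex boundary of dividing slope $0$ in the product framing, and $C_2$ is the $I$-invariant thickened torus between them whose two boundary components both carry dividing slope $0$. Contact $(-a/b)$-surgery on $L_1$ replaces $N_1$ with a solid torus $N_1'$ of meridional slope $-a/b$ and boundary dividing slope still $0$; by Honda, the tight contact structures on $N_1'$ are in bijection with minimal paths in the Farey tessellation from $-a/b$ clockwise to $0$ decorated with a sign on each edge after the first, and these signs are exactly the signs in the contact surgery recipe. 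The analogous statement holds for the solid torus $N_3'$ coming from $(a/b)$-surgery, with Farey path from $a/b$ clockwise to $0$.

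Each decorated Farey path stratifies $N_i'$ by convex tori realizing the vertices of the path, so the exterior of the core of $N_i'$ decomposes into basic slices matching the edge signs. Combining with $C_2$, the exterior of the two new cores in the surgered $S^1\times S^2$ becomes a $T^2\times I$ decomposed into basic slices whose combined Farey path runs $-a/b \to 0 \to a/b$. Because $\partial N_3'$ carries the reverse orientation when viewed from inside $C_2$, the ``clockwise'' convention on $N_3'$'s path is opposite to the natural traversal of the combined path, so a sign on an edge of $N_3'$'s path represents the opposite type of basic slice in the combined $T^2\times I$. Consequently, ``constant signs on $N_1'$, opposite constant signs on $N_3'$'' in the surgery recipe is exactly the condition that the combined path is uniformly signed. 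Honda's classification then guarantees this yields a tight contact structure on the $T^2\times I$ that extends across both cores; by uniqueness, the result must be contactomorphic to $\xi_{std}$. One can also see this directly by writing the surgery as a sequence of stabilizations and $(\pm 1)$-surgeries on $L_1$ and $L_3$: since $L_1$ and $L_3$ are Legendrian isotopic in $(S^1\times S^2,\xi_{std})$, the $(+1)$-surgery components on one side cancel against $(-1)$-surgery components on the other, recovering the standard Stein filling.

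Conversely, any sign pattern not of this form produces two adjacent edges in the combined path with opposite signs meeting at a common Farey vertex distinct from the endpoints (either within one surgery path or straddling the slope $0$ in $C_2$). The standard bypass / shortening argument then exhibits a convex disk realizing a Farey shortcut; attaching it forces the meridional slope of one of the new solid tori to become a dividing slope on a convex torus in its exterior, producing an overtwisted disk. Equivalently, since $\xi_{std}$ is the unique tight contact structure on $S^1\times S^2$ and the preceding analysis shows that only the stated sign patterns can be isotopic to $\xi_{std}$, every other sign choice must yield an overtwisted structure. The main obstacle in executing this plan is the careful bookkeeping of the orientation reversal on $\partial N_3'$ and the resulting sign flip in the combined Farey path; once this convention is pinned down, the lemma reduces to a direct application of Honda's classification and the uniqueness of $\xi_{std}$.
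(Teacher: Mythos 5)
First, a remark on the comparison itself: this paper never proves Lemma~\ref{balanced}; it is recalled from \cite[Lemma~4.2]{EtnyreOzbagciTosun2025} and attributed to \cite{Matkovic23}, so your argument can only be measured against those sources. Your framework is the right one and matches the setup of Section~\ref{contactcomplementary} (the decomposition into the two surgery tori and the $I$-invariant layer $C_2$, the Farey-path description, and the sign flip coming from the orientation reversal on the $N_3'$ side). However, both directions of your proof have a genuine gap at the decisive step. In the balanced case, the assertion that Honda's classification gives a tight $T^2\times I$ which ``extends across both cores'' and is then $\xi_{std}$ ``by uniqueness'' is circular: tightness of a closed manifold does not follow from tightness of a piece (gluing tight pieces along a compressible torus can create overtwisted disks), and Eliashberg's uniqueness can only be invoked \emph{after} tightness is established. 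A real argument is needed here, e.g.\ an explicit embedding of the balanced configuration into the model $T^2\times[0,1]/\!\sim$, or a fillability argument. Your fallback via cancelling surgeries is also only an assertion: the cancellation lemma requires the $(+1)$- and $(-1)$-curves to be Legendrian push-offs of one another, whereas after converting the two rational contact surgeries the curves are assorted stabilized push-offs of $L_1$ and $L_3$; whether anything cancels is exactly where the sign hypothesis must enter, and you have not tracked it.

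In the overtwisted direction, the ``Farey shortcut'' mechanism does not exist in general. A mixed pair of basic slices in the interior of a \emph{minimal} path cannot be shortened and forms a tight (virtually overtwisted) $T^2\times I$; mixed signs alone never produce convex tori of new slopes. Even at the junction the combined path is typically minimal: already for $a/b=2$ the slopes adjacent to $0$ are $-1$ and $1$, which are not Farey-adjacent, so no shortcut is available there either. Overtwistedness must instead be extracted from the interaction with the meridians of the glued-in solid tori: for instance, when the signs within one leg are mixed, that surgery torus is virtually overtwisted and the relevant cyclic covers are induced by covers of $S^1\times S^2$, while any tight structure on $S^1\times S^2$ is universally tight; and the consistent-but-not-balanced case (both tori universally tight) needs a separate analysis, e.g.\ of the solid torus obtained as the complement of one core, to show the induced structure has excessive twisting. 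Your closing sentence, that ``only the stated sign patterns can be isotopic to $\xi_{std}$, so every other choice is overtwisted,'' is likewise circular: distinct surgery presentations can yield isotopic contact structures, so non-tightness must be proved directly rather than read off from the presentation.
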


We now move to tight contact structures on small Seifert fibered spaces with zero twisting fibers. It is not hard to show, see \cite{GhigginiLiscaStipsicz06, LiscaStipsicz07, Wu04}, that any tight contact structure on a small Seifert fibered space with zero twisting fiber can be obtained by contact surgery on the link in Figure~\ref{fig:s1s2surgery}. 
It is known \cite{Wu04, Wu06} that any small Seifert fibered space with $e_0\geq 0$ has a zero twisting fiber. Thus, all tight contact structures come from some contact surgery on the link in Figure~\ref{fig:s1s2surgery}. For $e_0>0$ we can arrange that $-1/r_2, -1/r_3\leq -2$ and $-1/r_1\in(-2,-1)$. Since these are all negative contact surgeries and the contact twisting along the link components is $0$, we know that all the resulting contact structures are tight (and Stein fillable). It was shown in \cite{GhigginiLiscaStipsicz06, Wu04} that all these contact structures are distinct. For $e_0=0$ we can arrange that all the $-1/r_i\leq -2$. Once again we see that all these contact structures are tight (and Stein fillable), but in \cite{GhigginiLiscaStipsicz06} we see that they are not all distinct. 

Moving to $e_0=-1$ Seifert fibered spaces, it is not true that all tight contact structures have zero twisting fibers, but the ones that do will again come from a contact surgery on the link in Figure~\ref{fig:s1s2surgery}. In this case, we can arrange that $-1/r_1,-1/r_3\leq -2$ and $-1/r_2>1$. Notice that in this case, we must do a positive contact surgery on $L_2$, and hence, we are not guaranteed that all such contact structures are tight. In fact, we will see below that some are not. Note that a small Seifert fibered space with complementary legs and $e_0=-1$ must have a zero twisting fiber, \cite{Wu06}. Thus, we can use the surgery diagram above to represent all tight contact structures on such a manifold. We note that this surgery diagram seems different from the one used in \cite{LiscaStipsicz07, Matkovic23} shown on the right of Figure~\ref{fig:LSdiagram}.
\begin{figure}[htb]{
\begin{overpic}
{LSdiagram}
\footnotesize
\put(151, 54){$(+1)$}
\put(151, 67){$(s_1)$}
\put(151, 79){$(s_2)$}
\put(151, 91){$(s_3)$}
\put(347, 49){$(+1)$}
\put(347, 61){$(+1)$}
\put(347, 73){$(-1/r_1)$}
\put(347, 85){$(-1/r_2)$}
\put(347, 96){$(-1/r_3)$}
\end{overpic}}
 \caption{The left diagram is equivalent to Figure~\ref{fig:s1s2surgery} with contact $(s_i)$-surgery performed on $L_i$. On the right is the diagram for $Y(-1;r_1,r_2,r_3)$.}
  \label{fig:LSdiagram}
\end{figure}
To see that these diagrams are really the same, we note that if we write $r_i=p_i/q_i$ then $Y(-1;r_1,r_2,r_3)$ will be given in Figure~\ref{fig:s1s2surgery} by performing $-1/r_i$ surgery on $L_i$ for $i=1,3$ and $q_2/(q_2-p_2)$ surgery on $L_2$. It is well known that the surgery diagram in Figure~\ref{fig:s1s2surgery} corresponds to the surgery diagram on the left in Figure~\ref{fig:LSdiagram}, see for example \cite{DingGeiges09}. In our case, the $s_i=-1/r_i$ for $i=1,3$, and $s_2=q_2/(q_2-p_2)$. Using the algorithm in \cite{DingGeigesStipsicz04} to convert a positive contact surgery into contact $(+1)$-surgeries and negative contact surgeries, we see that our diagram is equivalent to the one given on the right of Figure~\ref{fig:LSdiagram}.

We now focus on $Y=Y(e_0;r_1,r_2,r_3)$ with complementary legs. Specifically, we will assume that $r_1+r_3=1$ so that we can write $r_1=p_1/q_1$ and $r_3=(q_1-p_1)/q_1$. In the $e_0=-1$ case, we see a contact surgery diagram for all possible tight contact structures on $Y$ given in the right-hand diagram of Figure~\ref{fig:LSdiagram}. Arguing as in the last paragraph, we can combine one of the contact $(+1)$-surgery curves and the curve with contact surgery coefficient $(-1/r_2)$  to obtain the diagram on the left with $s_1=-q_1/p_1\leq -2$, $s_2=-q_2/p_2\leq -2$, and $s_3=q_1/p_1$. Thus all contact structures on $Y$ come from surgery on the $L_i$ in Figure~\ref{fig:s1s2surgery} with contact surgery coefficients $(-1/r_1), (-1/r_2),$ and $(1/r_1)$. 

We will say contact $(r)$-surgery on $L$ is \dfn{consistent} if all of the signs determining the contact structure on the surgery torus have the same sign. We will say a contact surgery presentation for $Y$ in Figure~\ref{fig:s1s2surgery} is \dfn{balanced} if the contact surgery on $L_1$ and $L_3$ are both consistent and their signs are opposite. By Lemma~\ref{balanced} above it is clear that if $\xi$ comes from a balanced surgery diagram, then $\xi$ is Stein fillable (since $S^1\times S^2$ is). We have the following strengthening of this observation. But first, we recall from the introduction that $Y(-1;r_1,r_2,r_3)$ has \dfn{uniquely complementary legs} if $L_1$ and $L_3$ are complementary, and no initial subleg of $L_2$ is complementary with any initial subleg of $L_1$ or $L_3$. 

\begin{theorem}[Matkovi\v c 2023, \cite{Matkovic23}]\label{thmMatkovic}
A contact structure $\xi$ on $Y(-1;r_1,r_2,r_3)$ with uniquely complementary legs $L_1$ and $L_3$ is symplectically fillable if and only if its contact surgery presentation is balanced. 
\end{theorem}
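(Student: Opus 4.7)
The plan is to prove the two directions separately. The forward direction (balanced implies symplectically fillable) should follow quickly from Lemma~\ref{balanced} and the fact that Legendrian surgery preserves Stein fillability. Specifically, if the contact surgery data on $L_1 \cup L_3$ is balanced, then after performing those two surgeries and ignoring the $L_2$ surgery, Lemma~\ref{balanced} gives back $(S^1\times S^2,\xi_{std})$, which is Stein filled by $S^1\times D^3$. The remaining contact surgery on $L_2$ has coefficient $s_2\leq -2$ (after combining with the auxiliary contact $(+1)$-surgery, as in the discussion preceding Figure~\ref{fig:LSdiagram}), so by the Ding--Geiges--Stipsicz algorithm it factors into a sequence of Legendrian surgeries on suitably stabilized Legendrian knots. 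Each such surgery preserves Stein fillability, so $(Y,\xi)$ is Stein (hence symplectically) fillable.

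For the reverse direction I would proceed by normalization and a count. First, the hypothesis of \emph{uniquely} complementary legs is essential: it guarantees that the Heegaard torus $T = \partial V_1 = \partial V_3$ separating $L_1$ and $L_3$ is essentially canonical, so that the pair $\{L_1,L_3\}$ (and hence the surgery data on the two complementary legs) is well-defined up to swapping $L_1\leftrightarrow L_3$. Without this, an initial subleg of $L_2$ could be reshuffled across a subleg of $L_1$ or $L_3$ and alter the surgery coefficients while fixing $(Y,\xi)$, which would make the very notion of ``balanced'' ill-defined on the isomorphism class.

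With that normalization in hand, I would combine the Ghiggini--Lisca--Stipsicz classification of tight contact structures on $Y(-1;r_1,r_2,r_3)$ with the classification of their symplectic (equivalently Stein) fillings to count how many contactomorphism classes of $\xi$ admit symplectic fillings, and then match this count against the number of balanced surgery descriptions produced by the forward direction. The final step is to verify that every non-balanced surgery description either reduces by a contact handle cancellation to a balanced one (in which case it was already accounted for) or yields a contact structure whose invariants, e.g.\ the Ozsv\'ath--Szab\'o contact class or the $d_3$/$\theta$-invariant, obstruct the existence of a symplectic filling.

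The main obstacle will be this last step: ruling out fillability for tight (but non-balanced) structures. Non-balancedness alone does not force overtwistedness, so one has to produce a genuine obstruction. I expect the cleanest route is the invariant calculation: compute $\theta(\xi)$ from the contact surgery description using the formulas the authors develop in Sections~\ref{m2case} and~\ref{subsec: theta}, and show $\theta(\xi)\ne -2$ exactly when the description fails to be balanced; alternatively, one could build an explicit convex decomposition of a hypothetical filling across $T$ and use the uniquely complementary assumption to bound the dividing slopes on $T$ so that Lemma~\ref{balanced} forces an overtwisted disk to appear in the $S^1\times S^2$ piece.
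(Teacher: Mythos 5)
There is a genuine gap, and it is worth noting first that the paper itself does not prove this statement: Theorem~\ref{thmMatkovic} is imported from Matkovi\v c's work \cite{Matkovic23}, and the only part argued in the text is the easy implication, namely that a balanced presentation yields a Stein (hence symplectically) fillable contact structure, via Lemma~\ref{balanced} together with the fact that the remaining surgery on $L_2$ is negative contact surgery on a zero-twisting Legendrian. Your forward direction reproduces exactly this observation and is fine. The entire content of the theorem is the converse, and that is where your proposal does not work.

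Concretely, your main proposed obstruction for the converse fails: the condition $\theta(\xi)\neq -2$ obstructs only symplectic \emph{rational homology ball} fillings (this is how $\theta$ is used throughout the paper), not symplectic fillability in general. A Stein fillable contact structure can have any value of $\theta$; indeed the balanced structures you construct in the forward direction are Stein fillable while typically having $\theta\neq -2$, so the claim ``$\theta(\xi)\neq -2$ exactly when the description fails to be balanced'' is both false and, even if true, irrelevant to ruling out fillings. Your counting strategy is also circular: to ``match the count against the number of balanced descriptions'' you would need to already know how many tight structures on $Y(-1;r_1,r_2,r_3)$ are fillable, which is precisely the statement to be proved (the Ghiggini--Lisca--Stipsicz classification counts tight structures, not fillable ones). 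Finally, the convex-decomposition alternative is only a plan: a hypothetical weak filling of a tight, non-balanced structure is not known a priori to admit a convex copy of the Heegaard torus $T$ with controlled dividing slopes, and nothing in your sketch produces the overtwisted disk you would need; making such a splitting argument rigorous is exactly the kind of work (in the spirit of \cite{ChristianMenke19pre}) that Matkovi\v c's proof supplies and that your proposal leaves unproved.
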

We note that Matkovi\v c's theorem is much stronger than this and allows for complementary initial sublegs.

\section{Symplectic rational homology ball fillings when $e_0 \geq -1$} \label{eogeqm1}

Let $K$ be the $(-n)$-framed unknot in Figure~\ref{fig:balanced}. As discussed in the proof of Lemma~\ref{qhc}, the manifold described by Figure~\ref{fig:balanced} with $K$ removed is $L(p,q)\# (S^1\times S^2)$ and $K$ is the connected sum of the core of a Heegaard torus in $L(p,q)$ and a regular fiber in a Seifert fibration on $S^1\times S^2$.

\begin{lemma}\label{maxtb}
Let $\xi$ be the connected sum of any tight contact structure on $L(p,q)$ and the tight contact structure on $S^1\times S^2$. Then there is a Legendrian representative of $K$ with contact twisting given by the blackboard framing in Figure~\ref{fig:balanced}.
\end{lemma}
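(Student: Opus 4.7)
The plan is to construct the desired Legendrian representative of $K$ as a Legendrian connect sum of representatives in each summand of $L(p,q)\#(S^1\times S^2)$. Recall from the proof of Lemma~\ref{qhc} that $K=K_1\# K_2$, where $K_1$ is the core of a Heegaard torus in $L(p,q)$ and $K_2$ is a regular fiber of a Seifert fibration on $S^1\times S^2$, and that the contact structure splits as $\xi=\xi_L\#\xi_{std}$ along the connect sum sphere (where $\xi_L$ is the chosen tight contact structure on $L(p,q)$).

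For the $S^1\times S^2$ factor, I will take the Legendrian knot $L_2$ depicted in Figure~\ref{fig:s1s2surgery} as a Legendrian representative of $K_2$. As discussed in Section~\ref{contactcomplementary}, $L_2$ has contact twisting $0$ with respect to the product framing on $S^1\times S^2$, and I can stabilize to decrease the twisting as needed. For the $L(p,q)$ factor, I will invoke the fact that any tight contact structure $\xi_L$ on a lens space admits a convex Heegaard splitting, so that the core $K_1$ of a Heegaard solid torus can be Legendrian-realized (for example as a Legendrian divide on a nearby convex torus). By further stabilizing, I obtain a Legendrian $L_1$ representing $K_1$ with any sufficiently negative twisting relative to the Heegaard torus framing.

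With $L_1$ and $L_2$ in hand, the Legendrian connect sum $L_1\# L_2$ is a Legendrian representative of $K$ in $(L(p,q)\#(S^1\times S^2),\xi)$. The contact framing of a Legendrian connect sum is the connect sum of the contact framings of its summands (together with the standard $+1$ correction coming from the Legendrian connect sum operation), so by choosing the twisting of $L_1$ appropriately, I can arrange the total contact twisting to be any prescribed value that is small enough.

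The main obstacle, and where the bulk of the work lies, is the framing bookkeeping: I must identify how the $(-n)$-framing on the unknot $K\subset S^3$ in Figure~\ref{fig:balanced} translates, under the surgeries on the remaining components, into a framing on $K_1\# K_2\subset L(p,q)\#(S^1\times S^2)$, and then match this with the connect sum of the Heegaard torus framing on $K_1$ and the product framing on $K_2$. This matching is done by tracing through the round $1$-handle construction from the proof of Lemma~\ref{qhc}, where the $(-n)$-framed $2$-handle completes the round $1$-handle attaching along precisely the core curve of a Heegaard torus in $L(p,q)$ and a regular fiber in $S^1\times S^2$. Once the reference framings are pinned down, the flexibility from stabilizing $L_1$ yields a Legendrian representative of $K$ whose contact twisting equals the blackboard framing $-n$ from Figure~\ref{fig:balanced}.
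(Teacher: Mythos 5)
Your overall skeleton (write $K=K_1\# K_2$, Legendrian realize each summand, and apply the connected-sum twisting formula) is the same as the paper's, but the two realizations you propose do not deliver what the lemma actually requires, and the target itself is misread. First, the statement asks for contact twisting \emph{equal to} the blackboard framing of $K$ in Figure~\ref{fig:balanced}, i.e.\ twisting $0$ relative to the $0$-framing of that unknot; the coefficient $-n$ is the surgery framing measured against the blackboard framing, not the blackboard framing itself. This is a sharp, maximal-type realization: stabilization only decreases twisting, so ``any sufficiently negative twisting'' plus ``flexibility from stabilizing $L_1$'' can never produce it. Your lens-space step, in particular, only asserts sufficiently negative twisting, and the suggested mechanism (a Legendrian divide on a nearby convex torus) does not apply: Legendrian divides are parallel to the dividing curves, which are generically not isotopic to the core of the Heegaard solid torus. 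What is needed is a representative of the core with twisting exactly $-1$ relative to the relevant framing; the paper obtains this by presenting any tight structure on $L(p,q)$ as contact $(-p/q+1)$-surgery on the $\tb=-1$ unknot $U$ and taking the Reeb push-off $U'$ of $U$ as the Heegaard core.

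Second, on the $S^1\times S^2$ side you take $L_2$ from Figure~\ref{fig:s1s2surgery} as a representative of $K_2$, but $K_2$ is a \emph{regular fiber} of the Seifert fibration of Figure~\ref{fig:s1s2}, i.e.\ a torus knot of slope governed by $a/b$ on the Heegaard torus, which is not $S^1\times\{pt\}$ in general (the identification of $L_2$ with the fiber $F$ made later in the paper happens only after the surgeries on $L_1$ and $L_3$). The correct input here is that any torus knot in $(S^1\times S^2,\xi_{std})$ can be realized as a leaf of a linear characteristic foliation of a Heegaard torus, so its contact framing agrees with the Heegaard-torus framing, which in turn was identified with the $0$-framing in the proof of Lemma~\ref{gencomplementarylegs}. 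With those two sharp inputs the connected-sum formula gives twisting $0+(-1)+1=0$ relative to the blackboard framing, which is the assertion; without them your argument only produces representatives that are too negatively twisted, which would not even suffice for the later applications (Proposition~\ref{SteinCobord} and the stabilization count in Theorem~\ref{maine00} both require starting from contact framing equal to the blackboard framing).
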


\begin{proof}
One can find any torus knot in $S^1\times S^2$ as a leaf in a linear characteristic foliation of a Heegaard torus. See, for example, \cite[Lemma~4.2]{EtnyreOzbagciTosun2025}. So the contact framing of this leaf $K$ agrees with the framing coming from a Heegaard torus. As noted in the proof of Lemma~\ref{gencomplementarylegs} this framing agrees with the $0$-framing in Figure~\ref{fig:s1s2}. Now, any tight contact structure on $L(p,q)$ is obtained from some contact $(-p/q+1)$-surgery on the maximal Thurston-Bennequin unknot $U$. If $U'$ is a Legendrian push-off of $U$ (that is push $U$ slightly along the Reeb vector field), then $U'$ is a core of a Heegaard torus for $L(p,q)$. Clearly, $U'$ has contact twisting $-1$ (relative to the blackboard framing).  

We know that when connecting summing two Legendrian knots, the contact twisting of the connected sum is the sum of the contact twisting of the components plus $1$; see \cite{EtnyreHonda03}. Thus, the contact framing on the connected sum $K\# U'$ has contact framing agreeing with the blackboard framing in Figure~\ref{fig:balanced}.
\end{proof}

\begin{proposition}\label{SteinCobord}
There is a rational homology Stein cobordism from any contact structure on the lens space $L(p,q)$ to a contact structure on the small Seifert fibered space whose smooth surgery diagram is depicted in Figure~\ref{fig:balanced} with $n\geq 1$. 
\end{proposition}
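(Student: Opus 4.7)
The strategy is to realize the smooth rational homology cobordism of Lemma~\ref{qhc} as a Weinstein cobordism. Smoothly, that cobordism is built by attaching a round $1$-handle, which decomposes as an ordinary $1$-handle followed by a $2$-handle --- exactly the sort of decomposition Weinstein theory knows how to handle symplectically.

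Starting from any tight contact structure $\xi$ on $L(p,q)$, the first step is to attach a subcritical Weinstein $1$-handle to $L(p,q)\times[0,1]$ with both feet on $L(p,q)\times\{1\}$. This produces a Stein cobordism from $(L(p,q),\xi)$ to the contact connected sum $(L(p,q),\xi)\#(S^1\times S^2,\xi_{std})$, where the second summand carries the standard tight contact structure. This cobordism is the symplectic analogue of the $1$-handle piece of the round $1$-handle in Lemma~\ref{qhc}, and its attaching sphere corresponds to the two points at which the round handle meets the bottom boundary.

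Next, apply Lemma~\ref{maxtb} to pick a Legendrian representative of the $(-n)$-framed unknot $K$ of Figure~\ref{fig:balanced} in this connected sum whose contact framing agrees with the blackboard framing in the figure, so that $\tb(K)=0$. Since $n\ge 1$, one can stabilize $K$ a total of $n-1$ times to produce a Legendrian $K'$ with $\tb(K')=1-n$, and then attach a Weinstein $2$-handle along $K'$. This $2$-handle has smooth surgery coefficient $\tb(K')-1=-n$, matching the coefficient of $K$ in Figure~\ref{fig:balanced}. The upper boundary of the resulting Stein cobordism is therefore the small Seifert fibered space of Figure~\ref{fig:balanced}, endowed with the contact structure induced by the Weinstein structure.

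Composing the two Stein pieces yields a Stein cobordism from $(L(p,q),\xi)$ to some contact structure on the Seifert fibered space, and because its underlying smooth cobordism is exactly the one constructed in Lemma~\ref{qhc}, it is automatically a rational homology cobordism. The main obstacle in this plan is matching the Stein framing of the $2$-handle to the prescribed smooth coefficient $-n$, and this is precisely what forces the hypothesis $n\ge 1$: otherwise one cannot stabilize $K$ enough to bring $\tb$ from $0$ down to $1-n$, and the round handle would fail to be realizable in the Weinstein category.
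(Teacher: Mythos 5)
Your proposal is correct and follows essentially the same route as the paper: the paper's (one-sentence) proof likewise realizes the round $1$-handle of Lemma~\ref{qhc} as a Weinstein $1$-handle followed by a Weinstein $2$-handle along the Legendrian realization of $K$ furnished by Lemma~\ref{maxtb}, with the $n-1$ stabilizations needed to match the $(-n)$-framing made explicit only later in the proof of Theorem~\ref{maine00}. Your accounting of the stabilizations and of why $n\geq 1$ is required (with contact twisting measured against the blackboard framing, since $K$ need not be null-homologous) matches the paper's argument.
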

\begin{proof}
The rational homology cobordism constructed in Lemma~\ref{qhc} can be built with Stein handle attachments given the contact framing on a Legendrian realization of $F$ found in Lemma~\ref{maxtb}. 
\end{proof}

\begin{theorem}\label{maine00}
A small Seifert fibered space $Y$ with uniquely complementary legs given by $a/b$ and $a/(a-b)$ surgery, and $e_0=-1$ carries a contact structure which is symplectically fillable by a rational homology ball if and only if it is given by the surgery diagram in Figure~\ref{fig:balanced} where $[a^2_1,\ldots, a^2_{n_2}]=\frac {m^2}{mh-1}$ for some relatively prime integers $0<h<m$ or $m=1$ and $h=0$, and $n\geq 2$.

Moreover, there are at least $\min\{\lfloor b/2\rfloor,n\}$, respectively $\min\{\lfloor b/2\rfloor,2n\}$ pairwise nonisotopic tight contact structures with this property on such a small Seifert fibered space if $m=1$, respectively $m>1$. If $2n$, respectively $n$, is less than or equal to $b/2$, then there are exactly $2n$, respectively $n$,  such contact structures when $m>1$, respectively $m=1$. Furthermore, if $n>2$, then for $2n-2$, respectively $n-2$, of these contact structures, there is a unique symplectic rational homology ball filling of the contact manifold, up to blowups for $m>1$, respectively $m=1$. 
\end{theorem}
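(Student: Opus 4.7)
The plan is to combine the Stein rational homology cobordism of Proposition~\ref{SteinCobord} with Matkovi\v c's Theorem~\ref{thmMatkovic} and Lisca's classification of rational-ball fillings of lens spaces.

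For the existence (``if'') direction, when $[a_1^2,\ldots,a_{n_2}^2] = m^2/(mh-1)$, the quotient $m^2/(mh-1)$ lies in $\mathcal{R}$ (case (1) of Lisca's list with the $-1$ sign), and consequently $L(m^2,mh-1)$ admits symplectic rational homology ball fillings of several tight contact structures; in the degenerate case $m=1, h=0$ one simply starts with $S^3$ and $W=B^4$. Since $n\geq 2$ corresponds to $e_0=-1$ in Figure~\ref{fig:balanced}, Proposition~\ref{SteinCobord} provides a Stein rational homology cobordism from $L(m^2,mh-1)$ (equipped with a suitable tight contact structure) to $Y$. Capping this cobordism off with a Lisca filling yields the desired symplectic rational homology ball filling of a contact structure on $Y$. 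Varying the tight structure on $L(m^2,mh-1)$ and the Legendrian realization of $K$ from Lemma~\ref{maxtb} produces the family of fillable contact structures one wants to count.

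For the non-existence (``only if'') direction, suppose $(Y,\xi)$ is symplectically filled by a rational homology ball $W$. By Theorem~\ref{thmMatkovic}, the uniquely complementary hypothesis forces $\xi$ to admit a balanced contact surgery presentation on the link in Figure~\ref{fig:s1s2surgery}. Reading the rational homology cobordism of Lemma~\ref{qhc} in reverse and stacking $W$ on top yields a rational homology ball bounded by $L(p,q)$ for $p/q = [a_1^2,\ldots,a_{n_2}^2]$, so $p/q\in\mathcal{R}$ by Lisca. The main obstacle is to isolate the case $q=mh-1$ out of the four families in $\mathcal{R}$. I would handle this by tracking the contact structure induced on $L(p,q)$ through the inverted Stein cobordism, using the $\theta$-invariant computations developed in Sections~\ref{m2case} and \ref{subsec: theta} together with the Legendrian realizability of the balanced presentation on $S^1\times S^2$ to rule out cases (2), (3), (4) and the $+1$ variant of case (1).

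For the counting claim, the balanced contact presentations are enumerated by the consistent sign chains on the two complementary legs (opposite to each other by Lemma~\ref{balanced}) together with the Legendrian realization of the central $(-n)$-framed component. A direct enumeration yields the lower bounds $\min\{\lfloor b/2\rfloor, n\}$ or $\min\{\lfloor b/2\rfloor, 2n\}$, while distinctness of the resulting contact structures is verified by computing rotation numbers in their Stein fillings and invoking \cite[Theorem~1.2]{LiscaMatic97} together with \cite[Theorem~4.12]{Gompf98} to distinguish their Spin$^c$ structures, as modeled in Remark~\ref{extra}(2). Finally, uniqueness of the symplectic rational homology ball filling up to blowups for $2n-2$ (respectively $n-2$) of these contact structures follows by combining Lisca's classification of symplectic fillings of $L(m^2,mh-1)$ with a cap-and-decompose argument that attaches a symplectic $2$-handle to ``undo'' the cobordism of Proposition~\ref{SteinCobord}, reducing an arbitrary filling of $(Y,\xi)$ to a filling of $L(m^2,mh-1)$ that is essentially unique by \cite{Lisca2007}.
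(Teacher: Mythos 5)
Your skeleton (Proposition~\ref{SteinCobord} for existence, Theorem~\ref{thmMatkovic} to reduce to balanced structures, Lisca's list for the smooth constraint, Chern classes plus \cite[Theorem~1.2]{LiscaMatic97} for distinctness) is the same as the paper's, but two of the steps you leave as ``plans'' contain genuine gaps. First, the step that isolates $p/q=m^2/(mh-1)$ and the canonical structures: you propose to ``track the contact structure induced on $L(p,q)$ through the inverted Stein cobordism'' and to invoke the $\theta$-computations of Sections~\ref{m2case} and~\ref{subsec: theta}. Turning a Stein cobordism upside down does not give a symplectic cobordism, so no contact structure on $L(p,q)$ is induced from a filling of $Y$ this way. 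The correct mechanism is the forward one: every balanced structure on $Y$ with uniquely complementary legs is realized as the top of the Stein rational homology cobordism of Proposition~\ref{SteinCobord} built on \emph{some} tight structure on $L(p,q)$ (the balanced surgeries on $L_1,L_3$ give $\xi_{std}$ on $S^1\times S^2$ by Lemma~\ref{balanced}, and the negative surgery on $L_2$ is Legendrian surgery), so the $\theta$-invariants of the two contact structures agree; fillability by a rational homology ball forces $\theta=-2$ on $Y$, hence on the lens space. Moreover, the computations you cite concern canonical structures on $e_0\le -2$ Seifert spaces and do not apply here; the needed input is the lens-space result \cite[Lemma~9.4 and Remark~1.8]{EtnyreOzbagciTosun2025}, that every tight structure on $L(p,q)$ with $p/q\in\mathcal{R}$ has $\theta>-2$ unless $p/q=m^2/(mh-1)$ and the structure is $\pm\xi_{can}$. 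Without that (or an equivalent computation), your plan to ``rule out cases (2)--(4)'' is not actually carried out.

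Second, the uniqueness-of-filling claim. A ``cap-and-decompose argument that attaches a symplectic $2$-handle to undo the cobordism'' is not an argument: one cannot reduce an arbitrary symplectic filling of $(Y,\xi)$ to a filling of the lens space by reversing handle attachments. The essential ingredient is \cite[Theorem~1.4]{ChristianMenke19pre}, applied when the central $(-n)$-framed $2$-handle is attached along a Legendrian stabilized with both signs (for $m=1$), or with stabilizations whose sign is opposite to those defining $\pm\xi_{can}$ on the lens space (for $m>1$, cf.\ \cite[Theorem~3.1]{EtnyreRoy21}); this is what shows every filling of $(Y,\xi)$ comes from a filling of the lens space, where Lisca's uniqueness applies, and this hypothesis is precisely why only $n-2$, respectively $2n-2$, of the structures receive the uniqueness statement --- a count your proposal neither produces nor explains. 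A smaller point on the enumeration: in this $e_0=-1$ case the $n$ (resp.\ $2n$) surgery diagrams come from the $n$ stabilization choices of the central Legendrian and the two choices $\pm\xi_{can}$ on the lens space; the balanced sign choice on the complementary legs does not contribute, since either choice yields $\xi_{std}$ on $S^1\times S^2$ by Lemma~\ref{balanced}.
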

We note that Theorem~\ref{thm: minusone} is an immediate corollary of this result. 

\begin{proof}
Lisca \cite{Lisca08} has shown that the universally tight contact structure $\xi_{can}$ on $L(m^2, mh-1)$ is the convex boundary of a symplectic rational homology ball, and \cite{ChristianLi23, EtnyreRoy21, Lisca08} show that these are the only contact structures on any lens spaces admitting symplectic rational homology ball fillings. (The case when $m=1$ and $h=0$ corresponds to using the standard tight contact structure on $S^3$ in the construction.) Thus, using the Stein cobordism from Proposition~\ref{SteinCobord} we see that the claimed $Y$ admits rational homology ball fillings.  

To see the other implication, we first note that if the contact structure on $Y$ is not balanced, then it is not symplectically fillable by Theorem~\ref{thmMatkovic}. Thus, we can assume the surgery description of $Y$ is balanced. Recall in Section~\ref{contactcomplementary} (before Theorem~\ref{thmMatkovic}) we saw that all contact structures on $Y$ are obtained from contact surgery on the link in Figure~\ref{fig:s1s2surgery}. Moreover, for contact structures that might bound symplectic rational homology balls the surgeries on $L_1$ and $L_3$ are balanced, and hence we see that all these contact structures come from negative contact surgery on $L_2$, which after the surgeries on $L_1$ and $L_3$ is a torus knot in $S^1\times S^2$ (that is $L_2$ is the fiber $F$ in Figure~\ref{fig:s1s2}). All these surgeries can be effected by attaching a Stein round $1$-handle to $L_2$ and the core of a Heegaard torus in a lens space. Thus, for any contact structure on $Y$ we can build a Stein cobordism (as in the proof of Proposition~\ref{SteinCobord}) from the corresponding lens space to $Y$. Since this cobordism is a rational homology cobordism, it is clear that the $\theta$-invariant of the contact structure on $Y$ is the same as the $\theta$-invariant of the contact structure on the lens space. For $Y$ to smoothly bound a rational homology ball the lens space must be of the form $L(p,q)$ where $p/q\in \mathcal{R}$, \cite{Lisca2007}. In Lemma~9.4 (and Remark~1.8) of \cite{EtnyreOzbagciTosun2025} we showed that any tight contact structure $\xi$ on $L(p,q)$, where $p/q\in \mathcal{R}$, satisfies $\theta (\xi)>-2$ unless $p/q=m^2/(mh-1)$ and $\xi$ is isotopic to $\pm\xi_{can}$. Thus, if $Y$ has a rational homology ball filling it must be of the form claimed. 

From the discussion above, any contact structure on $Y$ that has a symplectic rational homology ball filling is the upper boundary of the symplectic cobordism built from a portion of the symplectization of $(L(m^2,mh-1), \pm \xi_{can})$ union $S^1\times D^3$ by attaching a round $1$-handle to $L_2$ in $S^1\times S^2$ and the core of a Heegaard torus in the lens space. After attaching the $1$-handle in the round $1$-handle we have upper boundary the lens space connect sum $S^1\times S^2$ and the $2$-handle of the round $1$-handle is then attached to the connected sum of a fiber in a Seifert fibration on $S^1\times S^2$ and the core of a Heegaard torus for the lens space. As argued in the proof of Lemma~\ref{maxtb} the Legendrian has contact twisting $0$. Since we are attaching it with framing $-n\leq -2$, we see we must stabilize the Legendrian knot $n-1$ times. Thus, there are $n$ possible ways to attach this $2$-handle. Hence, there are $2n$ possible Legendrian surgery diagrams for $Y$ if $m\not=1$ and $n$ if $m=1$ (the $2$ comes from the diagrams for $\pm \xi_{can}$ on the lens space and the $n$ comes from the choices for the $2$-handles). These surgery diagrams might not produce distinct contact structures. To address this question, we start with the $m=1$ case. In this case, the $2$-handle is attached to a torus knot in $S^1\times S^2$ that runs over the $1$-handles $b$ times (see \cite[Lemma~4.1]{EtnyreOzbagciTosun2025}). Thus, one can see that the second cohomology of the Stein filling is $\Z/b\Z$. The Chern class of the Stein manifold can assume at most $b$ values. As all the rotation numbers for the possible realizations for the $(-n)$-framed $2$-handle differ by an even number, one may easily see that if $n>b$ we can realize $b$ distinct Chern classes in the surgery diagrams (see \cite[Theorem~4.12]{Gompf98} to see how the rotation numbers contribute to the Chern class) if $b$ is odd and $b/2$ if $b$ is even. Thus, by \cite[Theorem~1.2]{LiscaMatic97} we know the contact structures associated to Stein fillings with distinct Chern classes must be distinct. This gives our bound in the case when $m=1$. When $m>1$, there are two possible choices for the Chern class of the lens space, and this leads to $2n$ possible Chern classes, but as above, at most $b$ or $b/2$ can be distinguished by the Chern class, depending on whether $b$ is odd or even. 

Now, when $2n$, respectively $n$, is less than $b/2$, we see that there are exactly $2n$, respectively $n$, contact structures with rational homology ball fillings. This follows because the second paragraph of this proof shows how all such contact structures are constructed, and above we saw that all of these will be distinct as the Chern classes on the symplectic manifolds they bound are distinct. 

Finally, using \cite[Theorem~1.4]{ChristianMenke19pre} we see that if the $2$-handle is attached along a Legendrian knot that has been stabilized both positively and negatively, then any symplectic filling will come from a symplectic filling of the connected sum of the lens space and $S^1\times S^2$ with this $2$-handle attached. Thus, when $m=1$, we see that $n-2$ of the above contact structures will have only one symplectic rational homology ball filling. 

Similarly, \cite[Theorem~1.4]{ChristianMenke19pre} shows that if the stabilizations of the Legendrian knot are all of one sign and those defining the contact structure on the lens space are of the opposite sign, then the filling will be unique ({\em cf } \cite[Theorem~3.1]{EtnyreRoy21}). Thus, each of those $2n-2$ tight structures will have only one symplectic rational homology ball filling. 
\end{proof}

\begin{theorem}\label{maine0+} 
Let $Y$ be a small Seifert fibered space with complementary legs and $e_0\geq 0$. A balanced contact structure $\xi$ on $Y$  bounds a symplectic rational homology ball if and only if $Y$ is given by the surgery diagram in Figure~\ref{fig:balanced},  where $[a^2_1,\ldots, a^2_{n_2}]=\frac {m^2}{mh-1}$ for some relatively prime integers $0<h<m$, and $n=1$.  Moreover, on such a $Y$, there are four balanced contact structures (up to isotopy) that bound a symplectic rational homology ball.

If $\xi$ is not a balanced contact structure, then $(Y, \xi)$ might bound a symplectic rational homology ball only if $Y$ is given by the surgery diagram in Figure~\ref{fig:balanced} where $[a^2_1,\ldots, a^2_{n_2}]\in \mathcal{R}$. Given such a $Y$ there is at most two, respectively four, contact structures (up to isotopy) that can bound a symplectic rational homology ball if $e_0=0$, respectively $e_0>0$. 
\end{theorem}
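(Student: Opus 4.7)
My plan is to adapt the argument of Theorem~\ref{maine00} to the case $e_0 \geq 0$, exploiting that the remark after Lemma~\ref{gencomplementarylegs} forces $n = 1$ in Figure~\ref{fig:balanced}. For the ``if'' direction of the balanced case, I would begin with the contact lens space $(L(m^2, mh-1), \pm\xi_{can})$ together with its symplectic rational homology ball filling from Lisca \cite{Lisca08}, then apply the Stein cobordism produced by Proposition~\ref{SteinCobord}. Because $n = 1$, Lemma~\ref{maxtb} furnishes an unstabilized Legendrian realization of the $(-1)$-framed attaching curve, so no stabilization choices arise. Stacking this Stein cobordism onto the lens space filling yields symplectic rational homology ball fillings for four balanced contact structures on $Y$: two from $\pm\xi_{can}$ on the lens space, multiplied by two from the choice of which of $L_1, L_3$ carries all-positive (respectively all-negative) signs in the balanced presentation (Lemma~\ref{balanced}). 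To verify these four are pairwise nonisotopic, I would distinguish them by the Chern classes of their fillings using \cite[Theorem~4.12]{Gompf98} and \cite[Theorem~1.2]{LiscaMatic97}, exactly as in the proof of Theorem~\ref{maine00}.

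For the ``only if'' direction of the balanced case, suppose $\xi$ is balanced and $(Y, \xi)$ is filled by a symplectic rational homology ball $W$. Lemma~\ref{balanced} guarantees that the balanced contact surgery on $L_1 \cup L_3$ returns $(S^1 \times S^2, \xi_{std})$, so $\xi$ arises from a negative contact surgery on the regular fiber $L_2$. As in the proof of Theorem~\ref{maine00}, this surgery can be realized as a Stein round $1$-handle attachment joining $S^1 \times D^3$ to a Stein filling of some contact lens space $(L(p,q), \xi_0)$ with $p/q = [a_1^2, \ldots, a_{n_2}^2]$. Gluing the Stein cobordism from $(L(p,q), \xi_0)$ to $(Y, \xi)$ onto $W$ produces a symplectic rational homology ball filling of $(L(p,q), \xi_0)$, and the classification of symplectically fillable contact lens spaces \cite{Lisca08, ChristianLi23, EtnyreRoy21} then forces $(L(p,q), \xi_0)$ to be $(L(m^2, mh-1), \pm\xi_{can})$ for some coprime $0 < h < m$.

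For the non-balanced case, the smooth obstruction is immediate: a symplectic rational homology ball filling is in particular a smooth rational homology ball filling, so Proposition~\ref{bound} forces $[a_1^2, \ldots, a_{n_2}^2] \in \mathcal{R}$. For the upper bound on the number of non-balanced candidates, I would invoke the classification of tight contact structures on small Seifert fibered spaces with $e_0 \geq 0$ via contact surgery on the link of Figure~\ref{fig:s1s2surgery} \cite{GhigginiLiscaStipsicz06, Wu04, Wu06}, combined with the necessary condition $\theta(\xi) = -2$ for any symplectic rational homology ball filling \cite{EtnyreOzbagciTosun2025}. Enumerating non-balanced sign patterns on $L_1 \cup L_3$ compatible with $\theta = -2$ yields at most four contact structures when $e_0 > 0$; when $e_0 = 0$, pairs of these are further identified by handle slides (as in the discussion preceding Theorem~\ref{thmMatkovic}), cutting the bound to two.

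The hard part will be the non-balanced case. While the smooth constraint is routine, controlling exactly which non-balanced contact surgery diagrams can satisfy $\theta(\xi) = -2$ is delicate and requires careful bookkeeping of the sign and stabilization patterns on $L_1, L_2, L_3$. As the remark following Theorem~\ref{thm: nonneg} anticipates, one expects that in fact none of the non-balanced candidates admits a symplectic rational homology ball filling, which is why the theorem deliberately records only the upper bound rather than an exact count.
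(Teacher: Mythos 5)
Your ``if'' direction for balanced structures is essentially the paper's: stack the Stein cobordism of Proposition~\ref{SteinCobord} (with $n=1$, so the $2$-handle needs no stabilization) on Lisca's rational ball fillings of $(L(m^2,mh-1),\pm\xi_{can})$. But your ``only if'' direction has a genuine gap. You propose to glue the Stein cobordism from $(L(p,q),\xi_0)$ to $(Y,\xi)$ onto the hypothetical filling $W$ of $(Y,\xi)$ to manufacture a symplectic rational homology ball filling of the lens space, and then quote the classification of such fillings of lens spaces. This gluing is not available symplectically: the cobordism has $Y$ as its \emph{convex} end, and $W$ also has $Y$ as convex boundary, so the two cannot be glued along $Y$ (turning the cobordism upside down works smoothly, as in Proposition~\ref{bound}, but destroys the symplectic structure). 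The paper avoids this entirely: it observes that every balanced contact structure on $Y$ sits at the top of a Stein \emph{rational homology} cobordism from a contact lens space, so the $\theta$-invariant of $\xi$ equals that of the lens space structure; then $\theta(\xi)=-2$ is necessary for a symplectic rational ball filling, Lisca's smooth result forces $p/q\in\mathcal{R}$, and Lemma~9.4 of \cite{EtnyreOzbagciTosun2025} shows the only tight structures on such lens spaces with $\theta=-2$ are $\pm\xi_{can}$ on $L(m^2,mh-1)$. Your argument needs this $\theta$-transfer step (or some substitute); the filling-of-the-lens-space step cannot be made to work as stated.

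Two further points. First, distinguishing the four balanced structures by Chern classes of the constructed fillings, ``exactly as in Theorem~\ref{maine00},'' does not go through here: since $n=1$ the round-handle $2$-handle is attached along an unstabilized Legendrian with rotation number zero, so the two choices of balanced signs on the complementary legs are invisible to the Chern class of the filling; the paper instead cites the classification of tight structures (proof of Theorem~2.4 in \cite{GhigginiLiscaStipsicz06}), which gives distinctness for $e_0>0$ and, for balanced legs, also for $e_0=0$. Second, your non-balanced count is not carried out: bounding the candidates by ``sign patterns compatible with $\theta=-2$'' would require a $\theta$-computation you do not do, whereas the paper uses the consistency-of-legs necessity from the proof of Proposition~5.1 of \cite{EtnyreOzbagciTosun2025} to get at most $2\times 2=4$ patterns (two same-sign choices on the complementary legs times two signs on the remaining leg) for $e_0>0$; and for $e_0=0$ the reduction to two is not by identifying pairs via the handle slides preceding Theorem~\ref{thmMatkovic}, but by the observation from \cite{GhigginiLiscaStipsicz06} that when all legs have the same sign and some $-1/r_i<-2$ a consistent leg can be traded for an inconsistent one (excluding those two patterns), together with a separate treatment of $Y(0;\frac12,\frac12,\frac12)$ (which fails even the smooth criterion, or has $\theta=-\frac43$).
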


\begin{proof}
The proof for balanced contact structures is identical to the proof in the $e_0=-1$ case given in Theorem~\ref{maine00}. One must only be careful that the surgery diagrams yield distinct contact structures, but this is always true for $e_0>0$ and also for $e_0=0$ when the legs are balanced, see the proof of Theorem~2.4 in \cite{GhigginiLiscaStipsicz06}. Thus, there are $4$ tight contact structures that bound rational homology balls; they come from the $2$ choices for the stabilizations on the complementary legs and the $2$ choices from the possible contact structures on the lens spaces that bound rational homology balls. 

If $\xi$ is not balanced, then we recall that from the proof of Proposition~5.1 in \cite{EtnyreOzbagciTosun2025} that if the contact structure $\xi$ on $Y$ admits a rational homology ball filling, it must have consistent legs. Since we are now considering non-balanced legs, there are $4$ possible contact structures since the complementary legs all have one sign and the other leg can have any sign. This gives $4$ distinct contact structures when $e_0>0$. This, coupled with Lecuona's work cited in Theorem~\ref{lecuonas}, finishes the case when $e_0>0$. 

We now consider the case that $e_0=0$. In \cite{GhigginiLiscaStipsicz06} it was shown that for $e_0=0$, some contact surgeries realizing $Y(0;r_1,r_2, r_3)$ yield the same contact manifold. Specifically, if $-1/r_i<-2$ for any $i$ and all the legs have the same sign, then one can change a consistent leg into an inconsistent leg (see the proof of Theorem~2.4 in \cite{GhigginiLiscaStipsicz06}). Thus, these contact structures do not admit a symplectic rational homology ball filling, as noted above. If $-1/r_i=-2$ for all $i$, then we can rule out the existence of a symplectic rational ball filling multiple different ways. For example, we can use Proposition~\ref{bound} to show that $Y(0;\frac{1}{2}, \frac{1}{2}, \frac{1}{2})$ does not even bound a smooth rational homology ball. Alternatively, we can show that the relevant contact structures have $\theta=-\frac{4}{3}$. Thus we are left with $2$ contact structures verifying the theorem in the case that $e_0=0$. 
\end{proof}

We finally note that Theorem~\ref{thm: minusone} and Theorem~\ref{thm: nonneg} are now a rephrasing of Theorem~\ref{maine00} and Theorem~\ref{maine0+}, respectively. Namely by computing the surgery coefficient associated to $[n,a^2_1,\ldots, a^2_{n_2}]$ where $[a^2_1,\ldots, a^2_{n_2}]=\frac{m^2}{mh-1}$ yield Theorem~\ref{thm: minusone} when $n\geq 2$ and Theorem~\ref{thm: nonneg} when $n=1$.

\section{Symplectic rational homology ball fillings when $e_0 \leq -2$}\label{m2case}
This section is devoted to the proof of Theorem~\ref{thm: smallSFSnotfillable} which says that a small Seifert fibered space $Y(e_0; r_1, r_2, r_3)$ with complementary legs and $e_0\leq -2$ does not admit a rational homology ball symplectic filling. 

Recall we are assuming that the complementary legs are defined by  $r_1+r_3=1$.  Set $1/r_k=[a_1^k,\ldots, a_{n_k}^k]$, $a^2_0=-e_0$, and $p/q=[a^2_0,a^2_1,\ldots, a^2_{n_2}]$. We notice that the $a_j^3$ terms are determined by the $a_i^1$ terms by the Riemenschneider point rule \cite{Riemenschneider1974}, and thus  $Y(e_0; r_1, r_2, r_3)$ is completely determined by the strings ${\bf a^1}=(a_1^1,\ldots, a_{n_1}^1)$ and ${\bf{a^2}}=(a_0^2, a_1^2,\ldots, a_{n_2}^2)$. Hence  we denote $Y(e_0; r_1, r_2, r_3)$ by $M_{{\bf a^1}, {\bf a^2}}$. If we set $p/q=[a_0^2, a_1^2,\ldots, a_{n_2}^2]$ then Lecuona's result discussed in Theorem~\ref{lecuonas} says that $M_{{\bf a^1}, {\bf a^2}}$ bounds a rational homology ball if and only if $(p-q)/q'\in \mathcal{R}$ where $0<q'<p-q$ is the reduction of $q$ modulo $p-q$. The equivalence of this criterion with the one given in the caption of Figure~\ref{fig:lecuona} can be seen by Lemma~\ref{lem: equiv}. 

\begin{lemma}\label{lem: equiv} Let $p/q=[a_0^2, a_1^2,\ldots, a_{n_2}^2]$. Suppose that  there is some  $ 1 \leq t \leq n_2$ such that $a^2_i =2$ 
for $ 0 \leq i \leq t-1$ and $a^2_t > 2$.  If we set 
$p_t/q_t=[a_t^2, a_{t+1}^2,\ldots, a_{n_2}^2]$, then we have 
\[
\frac{p-q}{q'}= \frac{p_t-q_t}{q_t}= [a_t^2-1, a_{t+1}^2,\ldots, a_{n_2}^2],
\] 
where $0<q'<p-q$ is the reduction of $q$ modulo $p-q$.  
\end{lemma}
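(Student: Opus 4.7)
The plan is to prove the two claimed equalities separately. The rightmost one, $\tfrac{p_t-q_t}{q_t} = [a_t^2-1, a_{t+1}^2, \ldots, a_{n_2}^2]$, is a direct identity obtained by subtracting $1$ from the continued fraction $p_t/q_t$:
\[
\frac{p_t - q_t}{q_t} = \frac{p_t}{q_t} - 1 = \left(a_t^2 - \cfrac{1}{[a_{t+1}^2, \ldots, a_{n_2}^2]}\right) - 1 = [a_t^2 - 1, a_{t+1}^2, \ldots, a_{n_2}^2].
\]
The hypothesis $a_t^2 > 2$ ensures that $a_t^2 - 1 \geq 2$, so the right-hand side is a valid Hirzebruch-Jung continued fraction.

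For the first equality, the plan is to expand the $t$ prefixed $2$'s in $p/q = [2, \ldots, 2, a_t^2, \ldots, a_{n_2}^2]$ in terms of $p_t/q_t$. Writing $P_k/Q_k := [\underbrace{2, \ldots, 2}_{k}, a_t^2, \ldots, a_{n_2}^2]$, the recursion $[2, b_0, \ldots, b_s] = (2B - B')/B$ (where $B/B' = [b_0, \ldots, b_s]$) gives $P_{k+1} = 2P_k - Q_k$ and $Q_{k+1} = P_k$. A short induction on $k$ yields the closed form
\[
P_k = (k+1) p_t - k q_t, \qquad Q_k = k p_t - (k-1) q_t.
\]
Setting $k = t$ shows $p = (t+1)p_t - t q_t$ and $q = t p_t - (t-1) q_t$, from which $p - q = p_t - q_t$ and $q - q_t = t(p_t - q_t)$ follow immediately. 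In particular, $q \equiv q_t \pmod{p-q}$.

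To conclude that the reduction $q'$ actually equals $q_t$ (and not some other representative of the class modulo $p-q$), I need the inequality $0 < q_t < p - q = p_t - q_t$, equivalently $p_t/q_t > 2$. This is where the strict hypothesis $a_t^2 > 2$ enters: if the tail is a singleton then $p_t/q_t = a_t^2 \geq 3$, and otherwise $p_t/q_t = a_t^2 - 1/[a_{t+1}^2, \ldots, a_{n_2}^2] > a_t^2 - 1 \geq 2$, using that any Hirzebruch-Jung continued fraction with entries $\geq 2$ is strictly greater than $1$. Combining $p - q = p_t - q_t$ with $q' = q_t$ gives the first equality, completing the proof. The main thing requiring care is the inductive bookkeeping for $P_k$ and $Q_k$; I do not anticipate any substantive obstacle beyond this essentially routine computation.
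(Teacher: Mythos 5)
Your proof is correct and follows essentially the same route as the paper: the paper asserts the closed form $p=(t+1)p_t-tq_t$, $q=tp_t-(t-1)q_t$ (which you derive by induction on the prefixed $2$'s) and then concludes $p-q=p_t-q_t$ and $q'=q_t$ exactly as you do. Your explicit verification that $p_t/q_t>2$, which justifies $0<q_t<p-q$, is a detail the paper leaves implicit, but it is the same argument.
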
 

\begin{proof} Under the given assumptions, we have 
\[
\frac{p}{q}=[a_0^2, a_1^2,\ldots, a_{n_2}^2]=\frac{(t+1)p_t -tq_t}{tp_t - (t-1)q_t}
\] 
and hence 
\[
\frac{p-q}{q}=\frac{p_t-q_t}{{tp_t - (t-1)q_t}}
\]
which implies that
\[
\frac{p-q}{q'}=\frac{p_t-q_t}{q_t},
\]
since $q'=q_t$. This last term is easily seen to be $[a_t^2-1, a_{t+1}^2,\ldots, a_{n_2}^2]$.
\end{proof}

We now consider the case when $e_0\leq -2$. In this case, the plumbing diagram defining $M_{{\bf a^1}, {\bf a^2}}$ is negative definite. Thus, it is the {\em oriented} link of a normal complex surface singularity, and therefore it has a canonical contact structure $\xi_{can}$ (also known as the Milnor fillable contact structure), which is unique up to contactomorphism \cite{CaubelNemethiPopescu-Pampu2006}.   

We now recall the Proposition~1.7 from \cite{EtnyreOzbagciTosun2025}.
\begin{proposition}\label{prop: rational}  If $\xi$ is any tight contact structure on the small Seifert fibered space $M_{{\bf a^1}, {\bf a^2}}$, which is not isotopic to $\pm \xi_{can}$,   then we have $ \theta(\xi_{can}) < \theta(\xi).$
\end{proposition}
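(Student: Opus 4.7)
The plan is to classify all tight contact structures on $M_{\mathbf{a}^1,\mathbf{a}^2}$ via their Stein fillings and then minimize $\theta$ by a direct lattice argument. Since $e_0 \leq -2$, $M_{\mathbf{a}^1,\mathbf{a}^2}$ is the boundary of a negative definite star-shaped plumbing $X_P$. By the Ghiggini--Lisca--Stipsicz classification (together with earlier work of Wu), every tight contact structure on $M_{\mathbf{a}^1,\mathbf{a}^2}$ is Stein fillable, and its filling is produced by Legendrian realization of $P$: each vertex $v$ of framing $-a_v$ becomes a Legendrian unknot with $\tb = -a_v+1$ and rotation number $r_v$ satisfying $|r_v| \leq a_v-2$ and $r_v \equiv a_v \pmod 2$. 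Thus the tight contact structures are indexed by admissible rotation vectors $\mathbf r = (r_v)$, up to a finite list of well-understood ``shuffling'' identifications.

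First I would invoke Gompf's formula on $(X_P, J_{\mathbf r})$ to obtain
\[
\theta(\xi_{\mathbf r}) = c_1(J_{\mathbf r})^2 - 3\sigma(X_P) - 2\chi(X_P) - 2,
\]
and observe that $\chi(X_P)$ and $\sigma(X_P)$ are fixed by the plumbing. Hence minimizing $\theta$ reduces to minimizing $c_1(J_{\mathbf r})^2 = \mathbf r^T Q^{-1}\mathbf r$, where $Q$ is the negative definite intersection matrix; equivalently, one \emph{maximizes} the positive definite quadratic form $-\mathbf r^T Q^{-1} \mathbf r$ over the admissible lattice. A direct check identifies $\pm \xi_{can}$ with the two corner vectors $\mathbf r_{can}^{\pm}$ given by $r_v = \pm(a_v-2)$: these correspond to the maximally positively, respectively negatively, stabilized Legendrian realizations, whose Stein filling is a blowup of the Milnor fiber.

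The main step, and the principal obstacle, is to establish strict maximality of $\mathbf r_{can}^{\pm}$ over the admissible lattice. The crucial ingredient is that $-Q$ is an irreducible Stieltjes (M-)matrix, so its inverse has strictly positive entries. Since the positive definite quadratic form $-\mathbf r^T Q^{-1}\mathbf r$ is convex, it attains its maximum on the box $\prod_v[-(a_v-2),(a_v-2)]$ at a vertex, forcing $|r_v|=a_v-2$ for every $v$. Expanding
\[
-\mathbf r^T Q^{-1}\mathbf r = \sum_v (-Q^{-1})_{vv}(a_v-2)^2 + \sum_{v\neq w} (-Q^{-1})_{vw}\, r_v r_w,
\]
the diagonal part is fixed while the off-diagonal part is strictly maximized precisely when all $r_v$ share a common sign, since flipping any sign turns a strictly positive cross term into a strictly negative one. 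Hence among corner vectors, $\pm\mathbf r_{can}$ are the unique maximizers. Because $\theta$ is a well-defined invariant of the contact structure and $\theta(\xi_{can})=\theta(-\xi_{can})$, no other admissible rotation vector can map to $\pm \xi_{can}$ under the shuffling identifications (they would otherwise force equal $\theta$-values), so the strict inequality $\theta(\xi_{can}) < \theta(\xi)$ holds for every tight $\xi$ not isotopic to $\pm \xi_{can}$.
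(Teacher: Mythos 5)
The paper does not prove Proposition~\ref{prop: rational} here at all: it is quoted verbatim from \cite{EtnyreOzbagciTosun2025} (Proposition~1.7 there), so your attempt has to stand on its own. Its second half does: since the plumbing graph is connected and negative definite, $-Q$ is an irreducible Stieltjes matrix, $-Q^{-1}$ is entrywise positive, and the strict convexity of $\mathbf r\mapsto -\mathbf r^TQ^{-1}\mathbf r$ forces any maximizer over the box $\prod_v[-(a_v-2),a_v-2]$ to be a vertex, with the two single-sign vertices $\pm\mathbf r_{can}$ the unique maximizers; this correctly yields the strict inequality for every contact structure that is presented by an admissible rotation vector other than $\pm\mathbf r_{can}$. (Two small points: Gompf's invariant is $\theta=c_1^2-2\chi-3\sigma$ with no extra $-2$ --- harmless here since it is a constant shift --- and your closing sentence about other rotation vectors ``mapping to $\pm\xi_{can}$'' addresses the wrong worry; what must be excluded is a tight $\xi$ not isotopic to $\pm\xi_{can}$ attaining the same $\theta$, which your strict vertex argument already handles once the reduction below is available.)

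The genuine gap is the first step, the reduction of an \emph{arbitrary} tight contact structure to an admissible rotation vector. The classification you invoke covers $e_0\geq 0$ (Ghiggini--Lisca--Stipsicz \cite{GhigginiLiscaStipsicz06}) and $e_0\leq -3$ (Wu \cite{Wu04, Wu06}); it does not cover $e_0=-2$, which is included in the hypothesis of Proposition~\ref{prop: rational} and is exactly the case this paper needs (the remarks after Theorem~\ref{thm: smallSFSnotfillable} stress that the earlier results ``fall short when $e_0=-2$''). For $e_0=-2$ you give no argument that every tight contact structure has negative maximal twisting, that it is isotopic (or even homotopic as a plane field) to one of the Legendrian-surgery candidates on the plumbing with $|r_v|\leq a_v-2$ and $r_v\equiv a_v \pmod 2$, or that it is Stein fillable; none of these statements follows from the references you name, and they are precisely the delicate input that \cite{EtnyreOzbagciTosun2025} supplies. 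Without that reduction, the minimization over the rotation-number lattice says nothing about a general tight $\xi$, so the proof as written establishes the proposition only for $e_0\leq -3$ (or, for $e_0=-2$, only for those tight structures already known to arise from Legendrian surgery on the plumbing).
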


Our main theorem in the $e_0\leq -2$ case will follow from the following computation. 
\begin{proposition} \label{prop: minSFS} The canonical contact structure $\xi_{can}$ on $M_{{\bf a^1}, {\bf a^2}}$,  satisfies
$-2 < \theta (\xi_{can})$, provided that  $(p-q)/q' \in\mathcal{R}$, where $0 < q' < p - q$ is
the reduction of $q$ modulo $p - q$.
 \end{proposition}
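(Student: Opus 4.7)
The plan is to compute $\theta(\xi_{can})$ directly from the minimal good resolution $X_{can}$ of the surface singularity whose link is $M_{\mathbf{a}^1,\mathbf{a}^2}$. When $e_0\le -2$ the associated star-shaped plumbing is negative definite, so Gompf's formula $d_3(\xi)=\frac{1}{4}(c_1^2(J)-2\chi(X)-3\sigma(X))$ applied to $(X_{can},J_{can})$, together with $c_1(J_{can})=-K_{X_{can}}$, $\chi(X_{can})=N+1$, $\sigma(X_{can})=-N$ (where $N=n_1+n_2+n_3+1$ is the number of plumbing vertices), and the relation $\theta=4d_3-2$, gives
\[
\theta(\xi_{can})+2 \;=\; K_{X_{can}}^2 + N - 2.
\]
Thus the proposition reduces to the arithmetic inequality $K^2+N>2$ on the plumbing.

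First, I would solve the adjunction system $K\cdot E_v = -2-E_v^2$ vertex by vertex. The star shape decouples it into three chain systems meeting at the central vertex, and on each chain the coefficients of $K$ can be computed recursively from the leaf inward in terms of the convergents of $[a_1^k,\ldots,a_{n_k}^k]$. This yields a closed-form $K^2 = C_0 + C_1 + C_2 + C_3$, where $C_0$ is the central-vertex contribution and each $C_k$ depends only on the continued-fraction data of arm $k$ and on the value of $K$ at the central vertex.

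Next, I would exploit the fact that the first and third arms are Riemenschneider-dual strings (since $a/b$ and $a/(a-b)$ are dual). Standard identities for dual Hirzebruch--Jung expansions let one fold $C_1+C_3$ together with part of $C_0$ into a single ``dual-pair'' contribution controlled by $a$ and $b$, while the remainder is exactly what one would compute for the lens space $L(p,q)$ with $p/q=[a_0^2,a_1^2,\ldots,a_{n_2}^2]$ equipped with its canonical contact structure. Using Lemma~\ref{lem: equiv} to rewrite the hypothesis as $[a_t^2-1,a_{t+1}^2,\ldots,a_{n_2}^2]\in\mathcal{R}$ and applying \cite[Lemma~9.4]{EtnyreOzbagciTosun2025}, the lens-space piece alone is already $\ge -2$, with equality precisely when it lies in Lisca's $L(m^2,mh-1)$ family.

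The main obstacle is showing that the dual-pair contribution of $(\mathbf{a}^1,\mathbf{a}^3)$ is strictly positive, enough to push $K^2+N$ above the lens-space baseline even when the baseline saturates at $-2$. I expect this to require a combinatorial induction on $n_1$ that uses the blowdown/blowup moves underlying Riemenschneider duality: the base case $n_1=n_3=1$, $a/b=2$ is a direct computation, and the inductive step shows that enlarging the dual pair increases the contribution by a positive amount extractable from the continued-fraction data. An alternative, perhaps cleaner, route is to exhibit a rational-homology Stein cobordism from $L(p,q)$ to $(M_{\mathbf{a}^1,\mathbf{a}^2},\xi_{can})$ obtained by blowing up the resolution of $L(p,q)$ and attaching the complementary legs as sequences of $(-1)$-curves; then the change in $\theta$ across this cobordism can be read off from the rotation numbers of the attaching circles and shown to be strictly positive, yielding the positivity bound required above.
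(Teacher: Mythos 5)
Your reduction to a weighted-graph computation is the right starting point (it is essentially how the paper proceeds: Proposition~\ref{prop: thetaSFS} is exactly the closed form for $c_1^2=\mathbf{r}_{can}^TQ^{-1}\mathbf{r}_{can}$, obtained by inverting the intersection matrix), but two things go wrong. First, a bookkeeping slip: the correct relation is $\theta(\xi_{can})=c_1^2-3\sigma-2\chi=K^2+N-2$, so the target is $K^2+N>0$, not $K^2+N>2$; the latter amounts to $\theta>0$, which is false --- in the extremal cases $\theta$ is only barely above $-2$. Second, and more seriously, the sign structure on which your argument rests is not there. From Formula~\eqref{eq: thetaSFS}, the contribution of the complementary legs relative to the $\widetilde{p}=2$ (dihedral) baseline is $\frac{2(\widetilde{p}-2)}{\widetilde{p}(p-q)}-\frac{(\widetilde{p}-2)^2q}{\widetilde{p}^2(p-q)}$, which is \emph{negative} whenever $q>2\widetilde{p}/(\widetilde{p}-2)$, so the ``dual-pair contribution'' is not strictly positive. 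And the ``lens-space baseline'' is not $\geq -2$ either: the hypothesis is $(p-q)/q'\in\mathcal{R}$, not $p/q\in\mathcal{R}$, so \cite[Lemma~9.4]{EtnyreOzbagciTosun2025} does not apply to $L(p,q)$; when $a_0^2>2$ the hypothesis only gives $I(p/q)\leq 2$, and in the critical class-V case ($(p-q)/q=m^2/(mh-1)$, $I(p/q)=2$) one has $\theta_{L(p,q)}(\xi_{can})=-I(p/q)-\frac{2+q+q^*}{p}<-2$. The sub-plumbing your splitting actually produces has central weight $-a_0^2$, not the weight $-(a_t^2-1)$ that Lemma~\ref{lem: equiv} attaches to the fraction lying in $\mathcal{R}$, so the hypothesis cannot be fed into the lens-space piece the way you propose. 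The entire content of the proposition is the tight cancellation in that extremal case, which the paper extracts from the identity $2+q+q^*=p-q$ for the fractions $m^2/(mh-1)$ together with the strict bound $(\widetilde{p}-2)^2/\widetilde{p}^2<1$, plus a separate reduction of the $e_0=-2$ case (leading $2$'s in $p/q$) via continued-fraction identities; this is precisely the step you defer to an unproven ``combinatorial induction,'' and with the signs as they actually are, that induction has no true statement to prove.

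Your alternative route also fails for the range of the proposition. For $e_0\leq -2$ the $(-n)$-framed handle in Figure~\ref{fig:balanced} has $n\leq 0$, so the round $1$-handle of Lemma~\ref{qhc} cannot be attached as a Stein handle (the Legendrian of Lemma~\ref{maxtb} has contact twisting $0$, so Stein attachment needs framing $\leq -1$, i.e.\ $n\geq 1$, which is why Proposition~\ref{SteinCobord} is restricted to $e_0\geq -1$); indeed Theorem~\ref{thm: smallSFSnotfillable} is consistent with the nonexistence of such a Stein cobordism from a fillable lens space. Replacing the round handle by blow-ups destroys the rational-homology-cobordism property, so there is no longer any reason for $\theta$ to transfer with a controlled, let alone positive, change. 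In short, the approach can only work the way the paper does it: first prove the exact formula of Proposition~\ref{prop: thetaSFS} (the linear-algebra content of Lemma~\ref{lem: inverse}), then run the two-case arithmetic estimate using Lisca's classification ($I\leq 1$ on $\mathcal{R}$, with equality exactly in the $m^2/(mh-1)$ family).
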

 
Based on Lecuona's characterization, the proof of Theorem~\ref{thm: smallSFSnotfillable} is obtained by combining Proposition~\ref{prop: rational}  and Proposition~\ref{prop: minSFS} as follows.

\begin{proof} [Proof of Theorem~\ref{thm: smallSFSnotfillable}] Let $e_0 \leq -2$. Suppose that the small Seifert fibered space $Y(e_0; r_1, r_2, r_3)$ has complementary legs and assume without loss of generality that $r_1+r_3=1$, so that $Y(e_0; r_1, r_2, r_3)= M_{{\bf a^1}, {\bf a^2}}$ with two complementary legs, and ${\bf a_1}$ and ${\bf a_2}$ as defined above. Lecuona \cite{Lecuona2019} showed that $M_{{\bf a^1}, {\bf a^2}}$ smoothly bounds a rational homology ball if and only if $(p-q)/q' \in\mathcal{R}$. On the other hand, Proposition~\ref{prop: minSFS} together with Proposition~\ref{prop: rational} imply that $-2 < \theta (\xi)$ for any tight contact structure $\xi$ on  $M_{{\bf a^1}, {\bf a^2}}$, provided that  $(p-q)/q' \in\mathcal{R}$.  Since the $\theta$-invariant of any tight contact structure that admits a rational homology ball symplectic filling is equal to $-2$, we immediately conclude that no rational homology ball with boundary $M_{{\bf a^1}, {\bf a^2}}$ can symplectically fill any contact structure on $M_{{\bf a^1}, {\bf a^2}}$.\end{proof}

In order to compute the $\theta$-invariant we recall a useful definition from \cite{Lisca2007}. 
Suppose that $r/s=[a_0, a_1, \ldots, a_k]$, where $a_i \geq 2$ for all $0 \leq i \leq k$. We set $$I(r/s)=\sum_{i=0}^{k} (a_i-3).$$ 
The proof of Proposition~\ref{prop: minSFS} is based on the following result, which is the main technical part of the paper. 

\begin{proposition} \label{prop: thetaSFS}   Using the notation above we set  $\widetilde{p}/\widetilde{q}=[a^1_1, a^1_2, \ldots, a^1_{n_1}]$. We have the following formula 
\begin{equation}\label{eq: thetaSFS}
\theta (\xi_{can}) =1- I(p/q)-\dfrac{1}{[a^2_{n_2}, a^2_{n_2-1}, \ldots, a^2_1, a^2_0-1]} +\dfrac{2(\widetilde{p}-2)}{\widetilde{p}(p-q)} -\dfrac{(\widetilde{p}-2)^2 q}{(\widetilde{p})^2(p-q)} \end{equation}
for the canonical contact structure $\xi_{can}$ on $M_{{\bf a^1}, {\bf a^2}}$. 
\end{proposition}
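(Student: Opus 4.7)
The strategy is to use the plumbing $4$-manifold $W$ coming from the minimal resolution as an almost complex (in fact Stein) filling of $(M_{\mathbf{a^1},\mathbf{a^2}}, \xi_{can})$, and apply Gompf's formula for the $\theta$-invariant, which together with the normalization $\theta=4d_3-2$ gives
\[
\theta(\xi_{can}) \;=\; c_1(W,J)^2 \;-\; 3\sigma(W) \;-\; 2\chi(W).
\]
The topological quantities are immediate: writing $n = 1+n_1+n_2+n_3$ for the number of vertices, negative definiteness gives $\chi(W)=n+1$ and $\sigma(W)=-n$, so $-3\sigma(W)-2\chi(W)=n-2$. The real content lies in computing $c_1(W,J)^2 = K_W^2$, where the canonical class $K_W=\sum_v k_v S_v$ is characterized by the adjunction equations $K_W\cdot S_v = a_v-2$ at each vertex $v$ of weight $-a_v$; equivalently $\mathbf{k}=M^{-1}\mathbf{b}$ with $b_v=a_v-2$ and $M$ the (negative definite) intersection form.

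To compute $K_W^2=\mathbf{b}^T M^{-1}\mathbf{b}$, I would exploit the star-shaped structure. The linear system decouples into three leg-subsystems coupled only through the central vertex $v_0$. Along each leg, the Hirzebruch-Jung continued fractions solve the tridiagonal subsystem in closed form; in particular, the Schur complement at $v_0$ contributed by leg $i$ alone is $-1/[a^i_{n_i},\ldots,a^i_1]$. Complementarity enters here in an essential way: the condition $r_1+r_3=1$ together with the Riemenschneider point rule forces the dual form $[a^1_1,\ldots,a^1_{n_1}]=\tilde p/\tilde q$ and $[a^3_1,\ldots,a^3_{n_3}]=\tilde p/(\tilde p - \tilde q)$, so the joint Schur complement of legs $1$ and $3$ at $v_0$ collapses to a rational function in $(\tilde p,\tilde q)$ alone. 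Its net effect is to shift the effective central weight from $-a^2_0$ to $-a^2_0+1$, which is what produces the reversed continued fraction $[a^2_{n_2},\ldots,a^2_1,a^2_0-1]$ appearing in the formula.

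With $k_0$ pinned down, the coefficients along the second leg are recovered by the three-term continued-fraction recursion, and the chain contribution $\sum_{i=0}^{n_2} k_{v^2_i}(a^2_i-2)$ unwinds to $1-I(p/q) - 1/[a^2_{n_2},\ldots,a^2_0-1]$ via standard Hirzebruch-Jung identities: the sum-of-squares-like piece naturally produces $I(p/q)=\sum(a^2_i-3)$, and the remaining $-1/[\,\cdot\,]$ captures the boundary correction at $v_0$. In parallel, the joint contribution of the complementary legs to $K_W^2$ produces the last two rational terms; the numerators $\tilde p-2$ should be recognizable as the rotation numbers of the Legendrian unknots representing the first vertices of legs $1$ and $3$ in the canonical Stein surgery diagram, which explains the clean form $\frac{2(\tilde p-2)}{\tilde p(p-q)} - \frac{(\tilde p-2)^2 q}{\tilde p^2(p-q)}$.

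\textbf{Main obstacle.} The hard part is the continued-fraction bookkeeping in the final step: identifying the rational function output of the Schur complement and tridiagonal inversion with the explicit closed form in \eqref{eq: thetaSFS}. This requires several standard but delicate Hirzebruch-Jung identities, notably the reversal identity $[b_n,\ldots,b_0]=p/q^*$ with $qq^*\equiv 1\pmod p$, together with the Riemenschneider duality between legs $1$ and $3$. A secondary concern is consistent sign-tracking throughout, since the equations $K_W\cdot S_v=-S_v^2-2$ mix signs against a negative-definite form, and small sign errors propagate through the continued-fraction algebra in non-obvious ways.
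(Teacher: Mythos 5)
Your strategy is the same as the paper's: start from $\theta(\xi_{can})=c_1^2(X)-3\sigma(X)-2\chi(X)$ for the negative-definite plumbing (so $-3\sigma-2\chi=n_1+n_2+n_3-1$), reduce everything to evaluating the inverse intersection form on the adjunction/rotation vector, and exploit the complementary-legs condition through the fact that eliminating legs $1$ and $3$ contributes $-\widetilde{q}/\widetilde{p}-(\widetilde{p}-\widetilde{q})/\widetilde{p}=-1$ at the central vertex, so the effective central weight becomes $-(a^2_0-1)$. That observation is exactly the content of Lemma~\ref{lem: inverse}, which says the central block of $Q^{-1}_{{\bf a^1},{\bf a^2}}$ is $\widetilde{B}^{-1}$ with $\widetilde{B}=M(a^2_0-1,a^2_1,\ldots,a^2_{n_2})$; your Schur-complement organization is a mild repackaging of the paper's explicit block inverse, and buys nothing essentially new.

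The gap is that everything you defer to ``standard Hirzebruch--Jung identities'' is the actual proof: the closed forms for first/last columns of tridiagonal inverses and the dot products $-1+\frac{1+t}{s}$ (Lemma~\ref{lem: firstrow}), the rank-one off-diagonal blocks of $Q^{-1}$ (Lemma~\ref{lem: inverse}, proved in the Appendix), and the term-by-term evaluations (Lemmas~\ref{lem: dpq}, \ref{lem: xy}, \ref{lem: 3items}, \ref{lem: yy}). Moreover, your attribution of terms is not literally correct: the central-plus-second-leg piece $\sum_{i=0}^{n_2}k_{v^2_i}(a^2_i-2)$ equals $-n_2-I(p/q)-\dfrac{1}{[a^2_{n_2},\ldots,a^2_1,a^2_0-1]}+\dfrac{2-\widetilde{p}}{\widetilde{p}}\left(1-\dfrac{1}{p-q}\right)$, not $1-I(p/q)-\dfrac{1}{[\,\cdot\,]}$; the leading $1$ and the clean rational terms of \eqref{eq: thetaSFS} appear only after adding $n_1+n_2+n_3-1$ and the complementary-leg contribution, where the cross terms cancel the stray $\frac{2-\widetilde{p}}{\widetilde{p}}$ pieces. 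Likewise, the factors $\widetilde{p}-2$ arise combinatorially as $-\widetilde{p}(\alpha+\beta)$ with $\alpha=-1+\frac{1+\widetilde{q}}{\widetilde{p}}$, $\beta=\frac{1-\widetilde{q}}{\widetilde{p}}$, not as rotation numbers of the first unknots of legs $1$ and $3$ (those contribute $\pm(a^1_1-2)$, $\pm(a^3_1-2)$). None of this invalidates the method, but the proposal is an outline whose technical core — precisely the linear-algebra and continued-fraction bookkeeping you flag as the main obstacle — still has to be carried out, and the loose bookkeeping in the sketch shows why it cannot be waved through.
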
 

\begin{remark} Note that  Proposition~\ref{prop: thetaSFS} is a generalization of \cite[Proposition 9.8]{EtnyreOzbagciTosun2025}, since the spherical $3$-manifold $D(p,q)$ is a small Seifert fibered space with two complementary legs with ${\bf a_1}={\bf a_3}=(2)$, which implies that $\widetilde{p}=2$, and therefore the sum of the last two terms in the Formula~(\ref{eq: thetaSFS}) vanishes for the case of $D(p,q)$. 
\end{remark}

We will prove this proposition in the next section and now give the proof of Proposition~\ref{prop: minSFS}.

\begin{proof}[Proof of Proposition~\ref{prop: minSFS}]
We consider two cases.

\smallskip
\noindent {{\bf Case (i):}} Suppose that $p-q >q$, which is equivalent to assuming that $-e_0=a_0^2 >2$.  It follows that $q'=q$ and  the hypothesis $(p-q)/q \in\mathcal{R}$ implies that $I((p-q)/q) \leq 1$ by Lisca's work \cite{Lisca2007} (see also, \cite[Appendix~A.2]{AcetoGollaLarsonLecuona2020pre}, where the $I$-values of all rational numbers in $\mathcal{R}$ are listed explicitly with maximum value of $1$). One may easily compute that $I((p-q)/q) = I(p/q) -1$. 

If we assume that $I((p-q)/q) \leq 0$ (and thus $I(p/q) \leq  1$), then the inequality  $-2 < \theta (\xi_{can})$ follows immediately from Formula~\eqref{eq: thetaSFS}. This is because the term $\dfrac{2(\widetilde{p}-2)}{\widetilde{p}(p-q)}$ in Formula~(\ref{eq: thetaSFS}) is always nonnegative, the term  $-\dfrac{(\widetilde{p}-2)^2 q}{(\widetilde{p})^2(p-q)}$ is greater than $-1$ since we assumed that $p-q >q$, and the term $-\dfrac{1}{[a^2_{n_2}, a^2_{n_2-1}, \ldots, a^2_1, a^2_0-1]}$ is strictly greater than $-1$ since $a^2_0 > 2$.

Now suppose that $I((p-q)/q)=1$. Since we assumed that  $(p-q)/q \in\mathcal{R}$, it follows that $(p-q)/q$ belongs to class (V) in  \cite[Appendix~A.2]{AcetoGollaLarsonLecuona2020pre} and the continued fraction expansion of $(p-q)/q$ is obtained from $[4]$ by final-$2$ expansions. An equivalent characterization of such a rational number $(p-q)/q$  is that $p-q=m^2$ and $q=mh-1$  for some coprime integers $0< h < m$ (see, for example, Proposition 4.1 in  \cite{StipsiczSzaboWahl08}). 

Recall that $p/q=[a_0^2, a_1^2,\ldots, a_{n_2}^2]$ and we assumed that $a_0^2 >2$. It immediately follows that $[a_0^2-1, a_1^2,\ldots, a_{n_2}^2]=(p-q)/q$ and by a well-known fact about continued fraction expansions we have $[a^2_{n_2}, a^2_{n_2-1}, \ldots, a^2_1, a^2_0-1]=(p-q)/q^*$, where  $0 < q^* < p-q$ is the multiplicative inverse of $q$ mod $(p-q)$, see \cite[Lemma~A4]{OrlikWagreich77}. Therefore, we have 
\[
\dfrac{1}{[a^2_{n_2}, a^2_{n_2-1}, \ldots, a^2_1, a^2_0-1]}= \dfrac{q^*}{p-q}.
\]   Since $q^*=m(m-h)-1$, we have $2+q+q^*=p-q$. It follows that 
\[
-\dfrac{1}{[a^2_{n_2}, a^2_{n_2-1}, \ldots, a^2_1, a^2_0-1]}  -\dfrac{(\widetilde{p}-2)^2 q}{(\widetilde{p})^2(p-q)}
> -\dfrac{q^*}{p-q} -\dfrac{q}{p-q}= -1.
\]

Since we assumed that $I((p-q)/q)=1$ (and therefore $I(p/q)=2$), we have $1- I(p/q) = -1$. Taking into account the fact that $\dfrac{2(\widetilde{p}-2)}{\widetilde{p}(p-q)}$ is always nonnegative, we again conclude by Formula~(\ref{eq: thetaSFS}) that $-2 < \theta (\xi_{can})$. 

\smallskip

\noindent {{\bf Case (ii):}} Suppose that $p-q <q$, which is equivalent to assuming that $-e_0=a^2_0 = 2$. If  $a^2_i =2$, for all $0 \leq i \leq n_2$, then $p=n_2+2$, $q=n_2+1$ and $q^*=1$, which implies that 
\[
\theta (\xi_{can}) =1+(n_2+1)-1+\dfrac{2(\widetilde{p}-2)}{\widetilde{p}} -\dfrac{(\widetilde{p}-2)^2 }{(\widetilde{p})^2} (n_2+1)> 0.
\]
 Otherwise,  there exists $0 \leq t < n_2$ such that $a^2_i =2$, for all $0 \leq i \leq t$, but $a^2_{t+1} > 2$. In other words,  we have 
 \[
 p/q = [\underbrace{2, \ldots 2}_{t+1}, a^2_{t+1},  \ldots, a^2_{n_2}] \;\; \mbox{and} \;\; b_{t+1} > 2.
 \]  
 By setting $p_t/q_t=[a^2_{t+1},  \ldots, a^2_{n_2}]$, one may compute that $p_t -q_t =p-q$ and $ q_t=q'$. Hence, the condition  $(p-q)/q' \in\mathcal{R}$ is equivalent to the condition  $(p_s -q_s) /q_s \in \mathcal{R}$, and combined with the observation that $p_t-q_t > q_t$,  {\bf Case (i)} of our proof  implies that 
 \begin{equation} \label{eq: ineq} 
 1- I(p_t/q_t)-\dfrac{1}{[a^2_{n_2}, a^2_{n_2-1}, \ldots, a^2_{t+2}, a^2_{t+1}-1]} +\dfrac{2(\widetilde{p}-2)}{\widetilde{p}(p_t-q_t)} -\dfrac{(\widetilde{p}-2)^2 q_t}{(\widetilde{p})^2(p_t-q_t)} > -2,
 \end{equation} 
  but note that the $\widetilde{p}$ and $\widetilde{q}$ are different in this equations (as their role below is unimportant, we do not establish new notation for them here). 

  Next, we observe that 
\begin{enumerate}[label=(\roman*)]
  \item $I(p_t/q_t)=I(p/q)+(t +1)$,
 \item $[a^2_{n_2}, a^2_{n_2-1}, \ldots, a^2_{t+2}, a^2_{t+1}-1] = [a^2_{n_2}, a^2_{n_2-1}, \ldots, a^2_1, a^2_0-1]$, and
  \item $q_t/(p_t-q_t) =q/(p-q)-(t+1)$,
\end{enumerate} 
where the last item is proven via induction on $t$.
By substituting these in~(\ref{eq: ineq}), we deduce  
 $$\theta (\xi_{can}) =1- I(p/q)-\dfrac{1}{[a^2_{n_2}, a^2_{n_2-1}, \ldots, a^2_1, a^2_0-1]} +\dfrac{2(\widetilde{p}-2)}{\widetilde{p}(p-q)} -\dfrac{(\widetilde{p}-2)^2 q}{(\widetilde{p})^2(p-q)} > -2$$ using the fact that $ 0 \leq (\widetilde{p}-2)/\widetilde{p} < 1$. 
\end{proof}

\section{Computing the $\theta$-invariant}   \label{subsec: theta} 

The goal of this subsection is to describe a proof of Proposition~\ref{prop: thetaSFS}, which is the main technical part of the paper. We begin with the following definition.

\begin{definition} \label{def: det} For any integer $n>0$ and any sequence of integers $m_1, m_2, \ldots, m_n$ with $m_i \geq 2$, we set $$M=M(m_1, m_2, \ldots, m_n)= \left[\begin{array}{cccc}
 -m_1 & 1 & &   \\
1 & -m_2 & \ddots & \\
 & \ddots & \ddots & 1 \\
 & & 1 & -m_n 
\end{array}\right].$$  For any $1 \leq i \leq n$, we define $u^M_i$ as the absolute value of the determinant of the top-left $i \times i$ submatrix of $M$, and we set  $u^M_0=1$.  Let ${\bf u}^M=[u^M_0 \; u^M_1 \; \cdots \; u^M_{n-1}]^T \in \mathbb{R}^n$. Similarly, for any $1 \leq j \leq n$, we define $v^M_j$ as the absolute value of the determinant of the bottom-right $j \times j$ submatrix of $M$, and we set $v^M_0=1$. Let ${\bf v}^M=[v^M_{n-1} \; v^M_{n-2} \; \cdots \; v^M_0]^T \in \mathbb{R}^n$. 
\end{definition}

\begin{lemma} \label{lem: firstrow} 
Using the notation as in Definition~\ref{def: det}, let ${s}/{t}=[m_1, m_2, \ldots, m_n]$ for some relatively prime integers $0 < t < s$. Then $\det M = (-1)^n s$,   the first column of $M^{-1}$ is given by the vector  $-\dfrac{1}{s}   \; {\bf v}^M$ and its dot product with the vector $[m_1-2 \; m_2-2 \; \cdots \; m_n-2]^T \in  \mathbb{R}^n$ is equal to $-1+ \dfrac{1+t}{s}$. Moreover,  the last column of $M^{-1}$ is given by the vector  $-\dfrac{1}{s}   \; {\bf u}^M$. Furthermore, the first entry of  ${\bf v}^M$ is $t$. 
\end{lemma}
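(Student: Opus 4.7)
The strategy is to derive every assertion by combining cofactor expansions of the tridiagonal $M$ with the standard numerator recurrences for Hirzebruch--Jung continued fractions. First I would verify that $u_k^M$ and $v_j^M$ satisfy
\[
u_k = m_k u_{k-1} - u_{k-2}, \qquad v_j = m_{n-j+1}\, v_{j-1} - v_{j-2},
\]
with $u_0 = v_0 = 1$, $u_1 = m_1$, $v_1 = m_n$; these follow by expanding the top-left $k \times k$ (respectively bottom-right $j \times j$) submatrix along its terminal row. Since these are the same recurrences that compute the numerators of $[m_1, \ldots, m_k]$ and $[m_{n-k+1}, \ldots, m_n]$, we obtain $u_n = s$, and tracking signs in the cofactor expansion yields $\det M = (-1)^n s$.

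To identify the columns of $M^{-1}$, I would avoid cofactor formulas and instead verify the two identities $M\mathbf{v}^M = -s\, e_1$ and $M\mathbf{u}^M = -s\, e_n$ directly. For the first, the $j$-th entry of $M\mathbf{v}^M$ (using the convention $v_{-1} := 0$) equals $v_{n-j+1} - m_j v_{n-j} + v_{n-j-1}$, which vanishes for $2 \leq j \leq n$ by the $v$-recurrence at index $k = n-j+1$, while for $j = 1$ the same recurrence at $k = n$ gives $-m_1 v_{n-1} + v_{n-2} = -v_n = -s$. The identity for $\mathbf{u}^M$ is analogous, using the $u$-recurrence. Combined with the formula for $\det M$, these show the first and last columns of $M^{-1}$ are $-\mathbf{v}^M/s$ and $-\mathbf{u}^M/s$, respectively.

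For the dot product, using the rearranged $v$-recurrence $m_j v_{n-j} = v_{n-j+1} + v_{n-j-1}$ makes the sum telescope:
\[
\sum_{j=1}^n (m_j - 2)\, v_{n-j} = \sum_{j=1}^n \bigl(v_{n-j+1} - 2 v_{n-j} + v_{n-j-1}\bigr) = v_n - v_{n-1} - v_0 = s - t - 1,
\]
so dividing by $-s$ yields $-1 + (1+t)/s$, contingent on the last assertion $v_{n-1} = t$. That will follow from the defining expansion $[m_1, \ldots, m_n] = m_1 - 1/[m_2, \ldots, m_n]$: writing the tail as $A/B$ in lowest terms produces $s/t = (m_1 A - B)/A$, and coprimality of $A$ with $B$ (hence with $m_1 A - B$) forces $t = A$, which is exactly the numerator of $[m_2, \ldots, m_n]$, i.e.\ $v_{n-1}^M$.

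The main obstacle is purely bookkeeping: keeping indices and signs consistent at the boundary terms ($j = 1$ and $j = n$) so that the telescoping collapses cleanly and the recurrences apply without off-by-one errors. No input beyond elementary properties of continued fraction convergents is needed.
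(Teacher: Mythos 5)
Your proposal is correct, and for the core of the lemma it takes a genuinely different route from the paper. The paper establishes $\det M=(-1)^n s$ the same way you do (matching the determinant recursion $d_i=-m_id_{i-1}-d_{i-2}$ with the convergent-numerator recursion), but then identifies the first column of $M^{-1}$ through cofactor expansion of the adjugate, obtains the last column by passing to the reversed matrix $M(m_n,\ldots,m_1)$ together with the duality $[m_n,\ldots,m_1]=s/t^*$ (citing Orlik--Wagreich), and computes the dot product via Cramer's rule, which requires introducing the auxiliary matrix $M^1$ and running a second recursion to show $\det M^1=(-1)^{n+1}(s-t-1)$. You instead verify the two identities $M\mathbf{v}^M=-s\,e_1$ and $M\mathbf{u}^M=-s\,e_n$ directly from the three-term recurrences, which handles both the first and last columns at once without any cofactor bookkeeping and without the reversed--continued-fraction duality, and you get the dot product by a second-difference telescoping sum rather than Cramer's rule. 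The boundary cases you flag ($j=1$, $j=n$, with $v_{-1}:=0$ and $v_n=s$) do check out, the telescoping gives $s-t-1$ as claimed, and your use of $v_{n-1}=t$ in that step is not circular since your proof of that last assertion (which matches the paper's: identify $v_{n-1}$ with the numerator of $[m_2,\ldots,m_n]$ via the defining expansion and coprimality) does not rely on the dot-product computation. What your approach buys is a shorter, more self-contained argument; what the paper's buys is explicit adjugate/cofactor formulas and the reversal identity, which it reuses elsewhere in the $\theta$-invariant computations.
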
 

\begin{proof} We first show that $\det M = (-1)^n s$. To prove this claim, we set 
\[
d_i = \det M(m_1, m_2, \dots, m_i) \; \mbox{and} \; \frac{s_i}{t_i} = [m_1, m_2, \dots, m_i] \; \mbox{for}\; 1 \leq i \leq n.
\]
We also set  $d_0=1=s_0$ and $t_0=0$. Note that, by definition, $d_1=-m_1$, and $d_n=\det M$. Computing $d_i$ using the cofactor expansion about the last row shows that the $d_i$'s satisfy the  following recursive relation 
\[
d_i=-m_id_{i-1}-d_{i-2} \; \mbox{for}\; 2 \leq i \leq n.
\] 
On the other hand, it is well-known and easily proven by induction, that 
\[
s_i =m_i s_{i-1}-s_{i-2} \; \mbox{and}\; t_i =m_i t_{i-1}-t_{i-2} 
\] 
hold for $ 2 \leq i \leq n$. Note that, by definition, $s_1=m_1, t_1=1, s_n=s$, $t_n=t$. Comparing the recursive relations for $d_i$'s and $s_i$'s we see that $d_i = (-1)^i s_i$ for $ 0 \leq i \leq n$. In particular, for $i=n$ we have 
$$
\det M=d_n= (-1)^n s_n=  (-1)^n s,
$$ 
as claimed in the lemma.  

Next, we observe that for any $1 \leq j \leq n$,  the determinant of the submatrix $M_{1,j}$ of $M$ obtained by deleting its first row and $j$th column is equal to the determinant of the bottom-right $(n-j) \times (n-j)$ submatrix of $M$, simply expanding along the first columns as we compute the determinants. Since $v^M_{n-j}$ is defined as the {\em absolute value} of the determinant of the bottom-right $(n-j) \times (n-j)$ submatrix of $M$, we see that $\det M_{1,j}=(-1)^{n-j} v^M_{n-j}.$ It follows that the  
  $(1, j)$-cofactor of $M$ is $$(-1)^{1+j} \det M_{1,j}= (-1)^{1+j} (-1)^{n-j}v^M_{n-j}=(-1)^{n+1} v^M_{n-j}.$$ 
  This implies that  the $(j,1)$-entry of $M^{-1}$ is equal to $$\frac{(-1)^{1+j} \det M_{1,j}} {\det M} = \frac{(-1)^{n+1} v^M_{n-j}}{(-1)^n s}=-\frac{v^M_{n-j}}{s}$$  for any $1 \leq j \leq n$. Therefore, the first column of $M^{-1}$ is given by the vector  $-\dfrac{1}{s}   \; {\bf v}^M$. 
  
Since the first row of $M^{-1}$ is equal to the transpose of its first column,  the dot product of $-\dfrac{1}{s}   \; {\bf v}^M$ with the vector $[m_1-2 \; m_2-2 \; \cdots \; m_n-2]^T$ is equal to the first component $w_1$ of the solution ${\bf w}$  of the linear system $$M {\bf w} = [m_1-2 \; m_2-2 \; \cdots \; m_n-2]^T.$$ We set  $$ M^1=M^1(m_1, m_2, \ldots, m_n)= \left[\begin{array}{cccc}
 m_1-2 & 1 & &   \\
m_2-2 & -m_2 & \ddots & \\
 \vdots & \ddots & \ddots & 1 \\
 m_n-2 & & 1 & -m_n 
\end{array}\right]$$ which is obtained by replacing the first column of $M$ by $[m_1-2 \; m_2-2 \; \cdots \; m_n-2]^T$. By Cramer's rule,   $$w_1=\frac{\det M^1}{\det M}.$$  We  claim that $\det M^1=(-1)^{n+1} (s-t-1)$ so that $$w_1 = \dfrac{\det M^1}{\det M}=\dfrac{(-1)^{n+1} (s-t-1)}{ (-1)^n s}= -1+ \dfrac{1+t}{s},$$  which finishes the proof.   To prove this last  claim, for $1 \leq i \leq n$, we set $k_i=s_i-t_i-1$ and  observe that 
\[
k_i=m_i(k_{i-1}+1)-k_{i-2}-2
\] 
for any $ 2 \leq i \leq n$, by the above recursive relations for $s_i$'s and $t_i$'s. Now we set 
\[
d'_i=\det M^1(m_1, m_2, \dots, m_i),
\] 
for $1\leq i\leq n$ and $d'_0=0$.
Note that $d'_1=m_1-2$ and $d'_n=\det M^1$, by definition.  Expanding along the last columns while computing the determinants, we derive the recursive relation  $$d'_i=-m_i (d'_{i-1}+(-1)^i)-d'_{i-2}+(-1)^i 2$$  for any $ 2 \leq i \leq n$. Finally,  by comparing the recursive relations for $k_i$'s and $d'_i$'s, we see that $d'_i= (-1)^{i+1}k_i$ and in particular, for $i=n$,  we have $$ \det M^1=d'_n= (-1)^{n+1}k_n=(-1)^{n+1}(s_n-t_n-1)=(-1)^{n+1}(s-t-1).$$

To prove the next  statement in the lemma we set $M'=M(m_n, m_{n-1}, \dots, m_1)$ and by what we showed above we know that first column of $(M')^{-1}$ is  $-\dfrac{1}{s}\; {\bf v}^{M'}$. Here we use the fact that $[m_n, m_{n-1}, \dots, m_1]=s/t^*$, where $t^*$ is the inverse of $t$ mod $s$, see \cite[Lemma~A4]{OrlikWagreich77}. But the last column of $M^{-1}$ is the first column of $(M')^{-1}$ written in reversed order and the result follows since ${\bf u}^M$ is equal to $ {\bf v}^{M'}$ written in reversed order.

Finally, we show that  the first entry of  ${\bf v}^M$ is $t$, which is the last statement in the  lemma. Note that the first entry of  ${\bf v}^M$ is the absolute value of the determinant of $$M(m_2, m_3, \dots, m_n),$$ by definition. Recall that $s/t=[m_1, m_2, \ldots, m_n]$, which implies that $$\frac{s}{t}=m_1-\frac{1}{[m_2, m_3, \ldots, m_n]}=m_1-\frac{\widetilde{t}}{\widetilde{s}}=\frac{m_1\widetilde{s}-\widetilde{t}}{\widetilde{s}} $$ where we set $\widetilde{s}/\widetilde{t} =[m_2, m_3, \ldots, m_n]$. As a consequence, by the first statement of the lemma, we observe that the absolute value of the determinant of $M(m_2, m_3, \dots, m_n)$ is $\widetilde{s}$, which in turn, is equal to $t$ by the line above. 
\end{proof}

\begin{remark} \label{rem: specialcase} In Definition~\ref{def: det}, we assumed that $m_i \geq 2$ for all $1 \leq i \leq n$. Suppose that we relax this condition so that $m_1=1$ and $m_i \geq 2$ for all $2 \leq i \leq n$, and let $M=M(1, m_2, \ldots, m_n)$, and the vectors ${\bf u}^M$ and ${\bf v}^M$ be defined as in Definition~\ref{def: det}. Then  Lemma~\ref{lem: firstrow} still (partially) holds as follows. We first observe that  $[1, m_2, \ldots, m_n]=\dfrac{s}{t}$ for some uniquely determined co-prime integers $0 < s < t $ (as opposed to $0 < t < s$). Then   $\det M = (-1)^n s$, the first and last columns of $M^{-1}$ are given by   $-(1/s)   \; {\bf v}^M$ and   $-(1/s)   \; {\bf u}^M$, respectively.  
\end{remark}

\smallskip
 
Let $X_{{\bf a^1}, {\bf a^2}}$ denote the $4$-manifold obtained by Kirby diagram in Figure~\ref{fig:plumbing} 
\begin{figure}[htb]{\small
\begin{overpic}
{plumbing}
\put(-21, 80){$-a^2_0$}
\put(42, 110){$-a^1_1$}
\put(133, 110){$-a^1_{n_1}$}
\put(42, 55){$-a^2_1$}
\put(133, 55){$-a^2_{n_2}$}
\put(42, 4){$-a^3_1$}
\put(133, 4){$-a^3_{n_3}$}
\end{overpic}}
\caption{Kirby diagram for the manifold $X_{{\bf a^1}, {\bf a^2}}$.  }
\label{fig:plumbing}
\end{figure}
so that $\partial X_{{\bf a^1}, {\bf a^2}} = M_{{\bf a^1}, {\bf a^2}}$. Then the intersection matrix $Q_{{\bf a^1}, {\bf a^2}}$ for $X_{{\bf a^1}, {\bf a^2}}$ can be described as in Figure~\ref{fig: matrix}, 
where  $$A=M(a^1_{n_1}, a^1_{n_1-1}, \ldots, a^1_1),\;\; B=M(a^2_0, a^2_1, \ldots, a^2_{n_2}), \mbox{and} \;\;  C=M(a^3_1, a^3_2, \ldots, a^3_{n_3}),$$
and in the off-diagonal terms, all entries not specified are $0$.

\begin{figure}[htb]{\small
\begin{overpic}
{matrix}
{\Large
\put(30, 155){$A$}
\put(95, 90){$B$}
\put(158, 28){$C$}
\put(77, 128){$1$}
\put(63, 110){$1$}
\put(136, 110){$1$}
\put(76, 52){$1$}}
\end{overpic}}
\caption{Intersection matrix $Q_{{\bf a^1}, {\bf a^2}}$ for $X_{{\bf a^1}, {\bf a^2}}$.}
\label{fig: matrix}
\end{figure}

\begin{lemma} \label{lem: inverse} The inverse $Q^{-1}_{{\bf a^1}, {\bf a^2}}$ can be described as in Figure~\ref{fig: inversematrix}, where the block matrices 
$\widetilde{B}$, $D$, $E$, $F$, $G$, $H$  in the figure  are defined as follows: 
\[
G= -\dfrac{q}{(\widetilde{p})^2(p-q)}  {\bf u}^A ( {\bf u}^A)^T,  \;\;\; 
D = -\dfrac{1}{\widetilde{p} (p-q)}   {\bf u}^A ({\bf v}^B)^T,  \;\;\; 
E = -\dfrac{q}{(\widetilde{p})^2(p-q)}   {\bf u}^A  ({\bf v}^C)^T
\] 
\[ 
\widetilde{B}=M(a^2_0-1, a^2_1, \ldots, a^2_{n_2}), \;\;\; 
F=  -\dfrac{1}{\widetilde{p} (p-q)}   {\bf v}^B ({\bf v}^C)^T, \;\;\; 
H= -\dfrac{q}{(\widetilde{p})^2(p-q)}   {\bf v}^C ({\bf v}^C)^T .
\] 
\end{lemma}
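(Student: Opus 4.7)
The plan is to verify the claimed formula for $Q^{-1}_{{\bf a^1}, {\bf a^2}}$ by a direct block-by-block computation of $Q_{{\bf a^1}, {\bf a^2}} \cdot Q^{-1}_{{\bf a^1}, {\bf a^2}} = I$, using only the information about $A^{-1}$, $\widetilde{B}^{-1}$, and $C^{-1}$ recorded in Lemma~\ref{lem: firstrow}.

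First, I would write
\[
Q_{{\bf a^1}, {\bf a^2}} = \begin{pmatrix} A & P & 0 \\ P^T & B & R^T \\ 0 & R & C \end{pmatrix}, \qquad P = e_{n_1} e_1^T, \qquad R = e_1 e_1^T,
\]
reflecting that the central vertex (the first row/column of $B$) is joined to the last vertex of the $A$-leg and to the first vertex of the $C$-leg. The claimed inverse from Figure~\ref{fig: inversematrix} should correspondingly have the form
\[
Q^{-1}_{{\bf a^1}, {\bf a^2}} = \begin{pmatrix} A^{-1} + G & D & E \\ D^T & \widetilde{B}^{-1} & F \\ E^T & F^T & C^{-1} + H \end{pmatrix}.
\]

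Second, I would collect the identities supplied by Lemma~\ref{lem: firstrow}. Using $|\det A| = |\det C| = \widetilde{p}$ and $|\det \widetilde{B}| = p-q$, the action identities read $A {\bf u}^A = -\widetilde{p}\, e_{n_1}$, $\widetilde{B}\, {\bf v}^{\widetilde{B}} = -(p-q)\, e_1$, $\widetilde{B}\, {\bf u}^{\widetilde{B}} = -(p-q)\, e_{n_2+1}$, and $C\, {\bf v}^C = -\widetilde{p}\, e_1$. The scalar identities are $e_{n_1}^T {\bf u}^A = \widetilde{q}$, $e_1^T {\bf v}^B = q$, $e_1^T {\bf v}^C = \widetilde{p}-\widetilde{q}$, together with the structural observations ${\bf v}^B = {\bf v}^{\widetilde{B}}$ (since these vectors only involve bottom-right submatrices not containing the $(1,1)$-entry) and $B = \widetilde{B} - e_1 e_1^T$.

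Third, I would verify the nine block equations of $Q \cdot Q^{-1} = I$. For example, the $(1,1)$-block reduces to $AG + PD^T = 0$, and substitution shows that both $AG$ and $-PD^T$ equal $\frac{q}{\widetilde{p}(p-q)} e_{n_1} ({\bf u}^A)^T$; the $(3,3)$-block is symmetric. The remaining equations follow the same pattern, and the most delicate is the middle equation $B\widetilde{B}^{-1} + P^T D + R^T F^T = I_{n_2+1}$. Here $B\widetilde{B}^{-1} = I + \tfrac{1}{p-q} e_1 ({\bf v}^{\widetilde{B}})^T$, while
\[
P^T D + R^T F^T = -\frac{\widetilde{q} + (\widetilde{p}-\widetilde{q})}{\widetilde{p}(p-q)}\, e_1 ({\bf v}^B)^T = -\frac{1}{p-q}\, e_1 ({\bf v}^B)^T,
\]
so the two rank-one corrections cancel exactly because $\widetilde{q}+(\widetilde{p}-\widetilde{q}) = \widetilde{p}$ and ${\bf v}^B = {\bf v}^{\widetilde{B}}$. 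The main obstacle is this middle equation, together with the $(2,1)$ and $(2,3)$ equations, where the rank-one cancellations likewise rely on $\widetilde{q} + (\widetilde{p}-\widetilde{q}) = \widetilde{p}$; this is the only place where the complementary legs hypothesis enters the calculation. Once all nine equations are verified, the lemma follows.
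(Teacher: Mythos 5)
Your proposal is correct and takes essentially the same route as the paper's appendix: both verify that $Q_{{\bf a^1}, {\bf a^2}}$ times the claimed matrix is the identity using exactly the inputs of Lemma~\ref{lem: firstrow} (the distinguished columns of $A^{-1}$, $\widetilde{B}^{-1}$, $C^{-1}$, the entries $\widetilde{q}$, $q$, $\widetilde{p}-\widetilde{q}$, and ${\bf v}^B={\bf v}^{\widetilde{B}}$), with your cancellation $\widetilde{q}+(\widetilde{p}-\widetilde{q})=\widetilde{p}$ playing precisely the role of the paper's key row identity. Your rank-one block bookkeeping ($P=e_{n_1}e_1^T$, $R=e_1e_1^T$, $B=\widetilde{B}-e_1e_1^T$) is simply a tidier organization of the paper's row-by-column case analysis; just note that when $a^2_0=2$ the identities for $\widetilde{B}$ rely on Remark~\ref{rem: specialcase}.
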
 

\begin{figure}[htb]{\small
\begin{overpic}
{matrix}
{\Large
\put(10, 154){$A^{-1}+G$}
\put(91, 91){$\widetilde{B}^{-1}$}
\put(138, 28){$C^{-1}+H$}
\put(95, 154){$D$}
\put(27, 91){$D^T$}
\put(157, 91){$F$}
\put(95, 28){$F^T$}
\put(157, 154){$E$}
\put(27, 28){$E^T$}
}
\end{overpic}}
\caption{The matrix $Q^{-1}_{{\bf a^1}, {\bf a^2}}$.}
\label{fig: inversematrix}
\end{figure}

\begin{remark} \label{rem: row} Note that in the definition of the matrix $E$, for example, ${\bf u}^A ( {\bf v}^C)^T$ is the $n_1 \times n_3$ matrix obtained by  multiplying the $n_1 \times 1$ column matrix ${\bf u}^A$ with the $1 \times n_3$ row matrix $({\bf v}^C)^T$. Since the first component of ${\bf u}^A$ is equal to $1$, by definition, this means that the first row of the matrix $E$ is given by $ -\dfrac{q}{(\widetilde{p})^2(p-q)}  ({\bf v}^C)^T$ and the $j$th row of $E$ is obtained by multiplying its first row by the $j$th component of ${\bf u}^A$.  A similar discussion applies to the matrices $D,  F,  G, H$ and $D^T, E^T, F^T$ as well.  
\end{remark} 

The proof of Lemma~\ref{lem: inverse}, which is given in the Appendix, is rather long but simply a straightforward computation. 

By definition (see \cite{Gompf98}), 
\[
\theta(\xi_{can}) = c_1^2 (X_{{\bf a^1}, {\bf a^2}}) - 3 \sigma (X_{{\bf a^1}, {\bf a^2}}) - 2 \chi (X_{{\bf a^1}, {\bf a^2}}),
\] 
where $\sigma (X_{{\bf a^1}, {\bf a^2}})=-(n_1+n_2+n_3+1)$ since the Kirby diagram in Figure~\ref{fig:plumbing} is negative-definite and $\chi (X_{{\bf a^1}, {\bf a^2}})=n_1+n_2+n_3+2$ because the $4$-manifold $X_{{\bf a^1}, {\bf a^2}}$ consists of a single zero handle and $n_1+n_2+n_3+1$ two handles. Therefore, to finish the proof of Proposition~\ref{prop: thetaSFS}, we need to calculate the term 
\[
c_1^2 (X_{{\bf a^1}, {\bf a^2}})={\bf r}^T_{can} Q^{-1}_{{\bf a^1}, {\bf a^2}} {\bf r}_{can},
\] 
where ${\bf r}_{can}$ denotes  the rotation vector corresponding to the canonical contact structure $\xi_{can}$. Note that ${\bf r}_{can}$  is given by ${\bf x}+{\bf y}$, where 
\[
{\bf x}=[\underbrace{0 \; \cdots \; 0}_{n_1} \;\; a^2_0-2 \;\; a^2_1-2 \; \; \cdots \;\; a_{n_2}-2 \;\; \underbrace{0 \; \cdots \;0}_{n_3}]^T
\]
\[
{\bf y}=[a^1_{n_1}-2 \;\; a^1_{n_1-1}-2 \;\; \cdots \; \; a^1_1-2 \; \;  \underbrace{0 \; \cdots \; 0}_{n_2+1} \;\; a^3_1-2 \; \; a^3_2-2 \;\;   \cdots \; \;a^3_{n_3}-2 ]^T.
\] 

\begin{lemma} \label{lem: dpq} We have 
\begin{equation} \label{eq: dpq} 
{\bf x}^T Q^{-1}_{{\bf a^1}, {\bf a^2}} {\bf x} = 2n_2+3-(a^2_0 +a^2_1+\cdots+a^2_{n_2}) - \dfrac{1}{[a^2_{n_2}, a^2_{n_2-1}, \ldots, a^2_1, a^2_0-1]}.\end{equation} 
\end{lemma}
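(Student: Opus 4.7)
The plan is to exploit that only the central block of $Q^{-1}_{{\bf a^1},{\bf a^2}}$ contributes. Since ${\bf x}$ vanishes in its first $n_1$ and last $n_3$ entries, the quadratic form ${\bf x}^T Q^{-1}_{{\bf a^1},{\bf a^2}}{\bf x}$ collapses to $({\bf x}')^T\widetilde{B}^{-1}{\bf x}'$, where ${\bf x}' = [a^2_0-2,\,a^2_1-2,\,\ldots,\,a^2_{n_2}-2]^T$ and $\widetilde{B}=M(a^2_0-1,a^2_1,\ldots,a^2_{n_2})$, directly by Lemma~\ref{lem: inverse}. So the entire task reduces to a tridiagonal computation on a single $(n_2+1)\times(n_2+1)$ block.

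The key algebraic step will be the identity
\[
{\bf x}' \;=\; -\widetilde{B}\cdot{\bf 1}\;-\;{\bf e}_{n_2+1},
\]
where ${\bf 1}\in\mathbb{R}^{n_2+1}$ is the all-ones vector and ${\bf e}_{n_2+1}$ is the last standard basis vector. This is immediate from the row sums of $\widetilde{B}$: the modification $a^2_0\mapsto a^2_0-1$ in the top-left corner is exactly what turns the first row-sum into $-(a^2_0-2)$, while the only \emph{defect} is at the bottom row, whose row-sum is $-(a^2_{n_2}-1)$ instead of $-(a^2_{n_2}-2)$. Using the symmetry of $\widetilde{B}$, this identity lets me expand
\[
({\bf x}')^T\widetilde{B}^{-1}{\bf x}' \;=\; {\bf 1}^T\widetilde{B}{\bf 1}\;+\;2\;+\;[\widetilde{B}^{-1}]_{n_2+1,\,n_2+1}.
\]
The term ${\bf 1}^T\widetilde{B}{\bf 1}$ is just the sum of all entries of $\widetilde{B}$, and equals $(2n_2+1)-(a^2_0+\cdots+a^2_{n_2})$ by inspection of the diagonal together with the $2n_2$ off-diagonal ones. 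Adding the constant $2$ already recovers the first two terms $2n_2+3-(a^2_0+\cdots+a^2_{n_2})$ of the claimed formula.

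The only genuinely nontrivial step remaining is to identify the corner entry $[\widetilde{B}^{-1}]_{n_2+1,\,n_2+1}$ with $-1/[a^2_{n_2},a^2_{n_2-1},\ldots,a^2_1,a^2_0-1]$. The plan is to pass to the reversed matrix $\widetilde{B}^*=M(a^2_{n_2},\ldots,a^2_1,a^2_0-1)$, under which the bottom-right entry of $\widetilde{B}^{-1}$ becomes the top-left entry of $(\widetilde{B}^*)^{-1}$, and then invoke the last statement of Lemma~\ref{lem: firstrow}: if $s^*/t^*=[a^2_{n_2},\ldots,a^2_1,a^2_0-1]$ then the top-left entry of $(\widetilde{B}^*)^{-1}$ equals $-t^*/s^*=-1/[a^2_{n_2},\ldots,a^2_1,a^2_0-1]$. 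Equivalently, one may run the determinantal recursion directly: writing $p_k/q_k=[a^2_0-1,a^2_1,\ldots,a^2_k]$, Cramer's rule combined with the sign computation from Lemma~\ref{lem: firstrow} gives $[\widetilde{B}^{-1}]_{n_2+1,\,n_2+1}=-p_{n_2-1}/p_{n_2}$, and the standard Hirzebruch--Jung identity $p_{n_2}/p_{n_2-1}=[a^2_{n_2},\ldots,a^2_1,a^2_0-1]$ (a direct consequence of the recursion $p_i=m_ip_{i-1}-p_{i-2}$) completes the computation.

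The main obstacle I anticipate is the boundary case $a^2_0=2$ (that is, $e_0=-2$), for which $\widetilde{B}$ has first diagonal entry $-1$ rather than a number $\leq -2$, so Lemma~\ref{lem: firstrow} does not literally apply. However, Remark~\ref{rem: specialcase} guarantees that the determinantal and corner-column formulas persist; since the argument above only uses the determinantal recursion and Cramer's rule, it goes through uniformly. Overall the proof is essentially bookkeeping: correctly matching signs in the inverse, handling the single-entry correction ${\bf e}_{n_2+1}$ coming from the $a^2_0\mapsto a^2_0-1$ modification, and recognizing the reversed Hirzebruch--Jung expansion at the end.
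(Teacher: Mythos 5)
Your proposal is correct, and it takes a different route from the paper: the paper does not carry out this computation at all, but simply cites it as being identical to the $D(p,q)$ case, i.e.\ Lemma~9.7 of \cite{EtnyreOzbagciTosun2025}. Your argument is instead self-contained within the present paper: you restrict to the central block $\widetilde{B}^{-1}$ of $Q^{-1}_{{\bf a^1},{\bf a^2}}$ using Lemma~\ref{lem: inverse}, and the row-sum identity ${\bf x}'=-\widetilde{B}{\bf 1}-{\bf e}_{n_2+1}$ (which I checked: the first row sum of $\widetilde{B}$ is $-(a^2_0-2)$ and the last is $-(a^2_{n_2}-1)$, so the single correction ${\bf e}_{n_2+1}$ is right) turns the quadratic form into ${\bf 1}^T\widetilde{B}{\bf 1}+2+[\widetilde{B}^{-1}]_{n_2+1,n_2+1}$, with ${\bf 1}^T\widetilde{B}{\bf 1}=2n_2+1-\sum a^2_i$ and the corner entry equal to $-1/[a^2_{n_2},\ldots,a^2_1,a^2_0-1]$ by Cramer's rule plus the standard reversal identity for Hirzebruch--Jung numerators. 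This is a clean and verifiable derivation, and arguably preferable to the paper's deferral, since it uses only the machinery (Lemma~\ref{lem: firstrow}, Lemma~\ref{lem: inverse}) already set up here; what the paper's citation buys is brevity and consistency with the earlier $D(p,q)$ computation. One small point: your first formulation of the corner-entry step, via the reversed matrix $\widetilde{B}^*=M(a^2_{n_2},\ldots,a^2_1,a^2_0-1)$ and the last statement of Lemma~\ref{lem: firstrow}, falls outside the literal hypotheses of that lemma when $a^2_0=2$ (the reversed string then \emph{ends} in $1$, whereas Remark~\ref{rem: specialcase} treats a \emph{leading} $1$); but the alternative you give -- the determinantal recursion $P_k=a^2_kP_{k-1}-P_{k-2}$ together with $P_{n_2}/P_{n_2-1}=[a^2_{n_2},\ldots,a^2_1,a^2_0-1]$ -- is purely formal, never divides by zero in this range, and handles that boundary case uniformly, so the proof stands as written provided you lead with that version.
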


\begin{proof} The calculation is the same as for the case of the spherical $3$-manifold $D(p,q)$, for which the formula above was derived  in \cite[Lemma 9.7]{EtnyreOzbagciTosun2025}. \end{proof} 
Next we set $$\alpha = -1 + \dfrac{1+\widetilde{q}}{\widetilde{p}}, \;\; \mbox{and} \;\; \beta = -1 + \dfrac{1+\widetilde{p}-\widetilde{q}}{\widetilde{p}}= \dfrac{1-\widetilde{q}}{\widetilde{p}}, $$ and observe that $\alpha +\beta= \dfrac{2-\widetilde{p}}{\widetilde{p}}$. 

\begin{lemma} \label{lem: xy} 
We have 
\begin{equation} \label{eq: xy}  
{\bf x}^T Q^{-1}_{{\bf a^1}, {\bf a^2}} {\bf y} = {\bf y}^T Q^{-1}_{{\bf a^1}, {\bf a^2}} {\bf x} = (\alpha+\beta) \left(1 - \dfrac{1}{p-q}\right) =\dfrac{2-\widetilde{p}}{\widetilde{p}}\left(1 - \dfrac{1}{p-q}\right). 
\end{equation}
\end{lemma}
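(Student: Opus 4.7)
The plan is to exploit the block structure of $Q^{-1}_{{\bf a^1}, {\bf a^2}}$ from Lemma~\ref{lem: inverse} together with the rank-one form of its off-diagonal blocks. Since ${\bf x}$ is supported only on the middle $B$-block coordinates and ${\bf y}$ only on the outer $A$- and $C$-block coordinates, only the blocks $D^T$ and $F$ contribute. Writing
\[
x_B = [a_0^2 - 2,\ \ldots,\ a_{n_2}^2 - 2]^T, \quad y_A = [a_{n_1}^1 - 2,\ \ldots,\ a_1^1 - 2]^T, \quad y_C = [a_1^3 - 2,\ \ldots,\ a_{n_3}^3 - 2]^T,
\]
and substituting the factorizations $D^T = -\tfrac{1}{\widetilde{p}(p-q)}\, {\bf v}^B ({\bf u}^A)^T$ and $F = -\tfrac{1}{\widetilde{p}(p-q)}\, {\bf v}^B ({\bf v}^C)^T$ from Lemma~\ref{lem: inverse}, the expression factors as
\[
{\bf x}^T Q^{-1}_{{\bf a^1}, {\bf a^2}} {\bf y} \;=\; -\,\frac{1}{\widetilde{p}(p-q)}\,(x_B^T\, {\bf v}^B)\,\bigl[({\bf u}^A)^T y_A + ({\bf v}^C)^T y_C\bigr].
\]

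The remaining task is to evaluate the three scalar products, all via Lemma~\ref{lem: firstrow}. For $x_B^T\, {\bf v}^B$, I would apply Lemma~\ref{lem: firstrow} directly to $B = M(a_0^2, a_1^2, \ldots, a_{n_2}^2)$ with $p/q = [a_0^2, \ldots, a_{n_2}^2]$, obtaining $x_B^T\, {\bf v}^B = p - q - 1$. For $({\bf v}^C)^T y_C$, the Riemenschneider point rule gives $[a_1^3, \ldots, a_{n_3}^3] = \widetilde{p}/(\widetilde{p} - \widetilde{q})$ since $\widetilde{p}/\widetilde{q} = [a_1^1, \ldots, a_{n_1}^1]$; applying Lemma~\ref{lem: firstrow} to $C$ then yields $({\bf v}^C)^T y_C = -\widetilde{p}\beta = \widetilde{q} - 1$.

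The one subtlety is $({\bf u}^A)^T y_A$, since Lemma~\ref{lem: firstrow} is phrased in terms of the ${\bf v}$-vector (first column of $M^{-1}$) rather than the ${\bf u}$-vector (last column). I would resolve this by noting that the tridiagonal matrices $M(m_1, \ldots, m_k)$ and $M(m_k, \ldots, m_1)$ are conjugate via the reversal permutation matrix and hence have equal determinants; consequently the entries of ${\bf u}^A$ coincide with the entries of ${\bf v}^{A^{\mathrm{rev}}}$ listed in reverse order, where $A^{\mathrm{rev}} = M(a_1^1, \ldots, a_{n_1}^1)$ has continued fraction $\widetilde{p}/\widetilde{q}$. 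Reversing both factors of the inner product cancels the reordering, and so $({\bf u}^A)^T y_A = ({\bf v}^{A^{\mathrm{rev}}})^T y_A^{\mathrm{rev}}$ with $y_A^{\mathrm{rev}} = [a_1^1 - 2, \ldots, a_{n_1}^1 - 2]^T$. Applying Lemma~\ref{lem: firstrow} to $A^{\mathrm{rev}}$ now gives $({\bf u}^A)^T y_A = -\widetilde{p}\alpha = \widetilde{p} - 1 - \widetilde{q}$.

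Summing, $({\bf u}^A)^T y_A + ({\bf v}^C)^T y_C = -\widetilde{p}(\alpha + \beta) = \widetilde{p} - 2$, and substitution into the displayed factorization produces
\[
{\bf x}^T Q^{-1}_{{\bf a^1}, {\bf a^2}} {\bf y} \;=\; \frac{(2-\widetilde{p})(p-q-1)}{\widetilde{p}(p-q)} \;=\; \frac{2-\widetilde{p}}{\widetilde{p}}\left(1 - \frac{1}{p-q}\right) \;=\; (\alpha+\beta)\left(1 - \frac{1}{p-q}\right),
\]
matching the claim. The equality ${\bf x}^T Q^{-1}_{{\bf a^1}, {\bf a^2}} {\bf y} = {\bf y}^T Q^{-1}_{{\bf a^1}, {\bf a^2}} {\bf x}$ follows at once from the symmetry of $Q^{-1}_{{\bf a^1}, {\bf a^2}}$. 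The main obstacle here is the careful bookkeeping of orientations when applying Lemma~\ref{lem: firstrow}, particularly the reversal argument that identifies ${\bf u}^A$ with the reverse of ${\bf v}^{A^{\mathrm{rev}}}$; the rest is algebraic manipulation.
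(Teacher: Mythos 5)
Your proposal is correct and follows essentially the same route as the paper: it exploits the rank-one form of the off-diagonal blocks $D^T$ and $F$ from Lemma~\ref{lem: inverse}, evaluates the resulting scalar products via Lemma~\ref{lem: firstrow}, and handles $({\bf u}^A)^T y_A$ by the same reversal identity the paper uses with $A'=M(a^1_1,\ldots,a^1_{n_1})$. The only (harmless) deviations are cosmetic: you factor out $x_B^T{\bf v}^B$ once rather than splitting ${\bf y}$ into ${\bf y_{a^1}}+{\bf y_{a^3}}$, and you compute $x_B^T{\bf v}^B=p-q-1$ by applying Lemma~\ref{lem: firstrow} directly to $B$ instead of passing through $\widetilde{B}$ and correcting by the first entry $q$ of ${\bf v}^B$, which gives the same value.
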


\begin{proof} It is clear by symmetry,  that ${\bf x}^T Q^{-1}_{{\bf a^1}, {\bf a^2}} {\bf y} = {\bf y}^T Q^{-1}_{{\bf a^1}, {\bf a^2}} {\bf x}$.   
Now let 
\[
{\bf y_{a^1}}=[a^1_{n_1}-2 \;\; a^1_{n_1-1}-2 \;\; \cdots \;\; a^1_1-2 \;\;   \underbrace{0 \; \cdots \; 0}_{n_2+1+n_3}]^T
\] 
and 
\[
{\bf y_{a^3}}=[ \underbrace{0 \; \cdots \; 0}_{n_1+n_2+1} \;\; a^3_1-2 \; \; a^3_2-2 \; \;  \cdots \; a^3_{n_3}-2]^T
\] 
so that $ {\bf y} = {\bf y_{a^1}} + {\bf y_{a^3}}.$ We claim that 
\[
{\bf y}^T_{\bf a^1} Q^{-1}_{{\bf a^1}, {\bf a^2}} {\bf x}= \alpha \left(1 - \dfrac{1}{p-q}\right)\;\; 
\mbox{and}\;\; {\bf y}^T_{\bf a^3} Q^{-1}_{{\bf a^1}, {\bf a^2}} {\bf x}= \beta \left(1 - \dfrac{1}{p-q}\right),
\] 
which finishes the proof of Lemma~\ref{lem: xy}. To prove the former of these claims, we observe that only the matrix $D= -\dfrac{1}{\widetilde{p} (p-q)}   {\bf u}^A ({\bf v}^B)^T,$ in Figure~\ref{fig: inversematrix} is involved in the calculation. We also note that $(p-q)/q= [a^2_0-1, a^2_1, \ldots, a^2_{n_2}]$ since $p/q= [a^2_0, a^2_1, \ldots, a^2_{n_2}]$, and ${\bf v}^B = {\bf v}^{\widetilde{B}}$. Hence it follows from Lemma~\ref{lem: firstrow} that,  the first row of $(\widetilde{B})^{-1}$ is equal to 
\[ 
-\dfrac{1}{p-q} ({\bf v}^{\widetilde{B}})^T= -\dfrac{1}{p-q} ({\bf v}^B)^T .
\]
Moreover,  the dot product of  $-\dfrac{1}{p-q} ({\bf v}^{\widetilde{B}})^T$ with  $[a^2_0-3 \;\; a^2_1-2 \; \; \cdots \;\; a^2_{n_2}-2]^T$ is equal to $-1+\dfrac{1+q}{p-q}$ by Lemma~\ref{lem: firstrow}. Hence the dot product of  $-\dfrac{1}{p-q} ({\bf v}^B)^T$ with $[a^2_0-2 \;\; a^2_1-2 \; \; \cdots \;\; a^2_{n_2}-2]^T$ is equal to $-1+\dfrac{1+q}{p-q} -\dfrac{q}{p-q}= -1 + \dfrac{1}{p-q}$. This is because the only difference between these dot products is the first entry of $-\dfrac{1}{p-q} ({\bf v}^B)^T$, which is equal to $-\dfrac{q}{p-q}$ since the first entry of $({\bf v}^B)^T$ is $q$, which follows by the last statement in Lemma~\ref{lem: firstrow}. As pointed out in Remark~\ref{rem: row}, the rows of $D$ are the multiples of  $-\dfrac{1}{p-q} ({\bf v}^B)^T$ by the components of the vector  $(-1/\widetilde{p}) {\bf u}^A $. This implies that 
\[
{\bf y}^T_{\bf a^1} Q^{-1}_{{\bf a^1}, {\bf a^2}} {\bf x}= \alpha \left(1 - \dfrac{1}{p-q}\right),
\] 
by the following argument.  Recall that $A=M(a^1_{n_1}, a^1_{n_1-1}, \ldots, a^1_1)$, by definition and let $A'=  M (a^1_1, a^1_2, \ldots, a^1_{n_1})$.  Then the vector ${\bf v}^{A'}$ is obtained from ${\bf u}^{A}$ by reversing the order of its components. Thus, the dot product of $[a^1_{n_1}-2 \;\; a^1_{n_1-1}-2 \;\; \cdots \;\; a^1_1-2]^T$ (which is obtained by truncating ${\bf y}_{\bf a^1}$ by removing the zeros at the end) with the vector $(1/ \widetilde{p})  \; {\bf u}^A $ is equal to the dot product of $[a^1_1-2 \;\; a^1_{2}-2 \;\; \cdots \;\; a^1_{n_1}-2]^T$ with the vector $(1/ \widetilde{p})  \; {\bf v}^{A'}$, which in turn, is equal to  $-\left(-1 + \dfrac{1+\widetilde{q}}{\widetilde{p}}\right)=-\alpha$ by Lemma~\ref{lem: firstrow}, since $\widetilde{p}/\widetilde{q}=[a^1_1, a^1_2,  \ldots, a^1_{n_1}]$.  

 A similar argument, involving the matrix $F^T$  in Figure~\ref{fig: inversematrix}, proves that $${\bf y}^T_{\bf a^3} Q^{-1}_{{\bf a^1}, {\bf a^2}} {\bf x}= \beta \left(1 - \dfrac{1}{p-q}\right),$$ again by  Lemma~\ref{lem: firstrow}. 
\end{proof}

\begin{lemma} \label{lem: 3items} We have 
\begin{enumerate}[label=(\roman*)]
        \item   ${\bf y}^T_{\bf a^1} Q^{-1}_{{\bf a^1}, {\bf a^2}} {\bf y_{a^3}} = {\bf y}^T_{\bf a^3} Q^{-1}_{{\bf a^1}, {\bf a^2}} {\bf y_{a^1}} =-\alpha \beta\dfrac{q}{p-q},$
        \item ${\bf y}^T_{\bf a^1} Q^{-1}_{{\bf a^1}, {\bf a^2}} {\bf y_{a^1}} = -2\alpha - \alpha^2 \dfrac{q}{p-q} -(n_3-1) + \dfrac{\widetilde{q} - (\widetilde{q})^*}{\widetilde{p}},$ and
        \item ${\bf y}^T_{\bf a^3} Q^{-1}_{{\bf a^1}, {\bf a^2}} {\bf y_{a^3}} = -2\beta - \beta^2 \dfrac{q}{p-q} -(n_1-1) + \dfrac{(\widetilde{q})^* - \widetilde{q}}{\widetilde{p}},$
    \end{enumerate}
where $(\widetilde{q})^*$ is the multiplicative inverse of $\widetilde{q}$ mod $\widetilde{p}$. 
\end{lemma}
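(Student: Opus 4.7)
My plan is to exploit the block structure of $Q^{-1}_{{\bf a^1}, {\bf a^2}}$ from Figure~\ref{fig: inversematrix}, together with the factored forms of its off-diagonal blocks and the dot-product identities already established in the proof of Lemma~\ref{lem: xy}. Throughout, write ${\bf y}_{a^1,\mathrm{tr}} = [a^1_{n_1}-2, \ldots, a^1_1-2]^T$ and ${\bf y}_{a^3,\mathrm{tr}} = [a^3_1-2, \ldots, a^3_{n_3}-2]^T$ for the nonzero parts of ${\bf y_{a^1}}$ and ${\bf y_{a^3}}$. The arguments from the proof of Lemma~\ref{lem: xy}, based on Lemma~\ref{lem: firstrow} applied to $M(a^1_1,\ldots,a^1_{n_1})$ and to $C=M(a^3_1,\ldots,a^3_{n_3})$ (whose Hirzebruch--Jung continued fractions are $\widetilde{p}/\widetilde{q}$ and $\widetilde{p}/(\widetilde{p}-\widetilde{q})$, respectively), give the key dot-product identities
\[
{\bf y}_{a^1,\mathrm{tr}}^T\, {\bf u}^A = -\widetilde{p}\, \alpha, \qquad ({\bf v}^C)^T\, {\bf y}_{a^3,\mathrm{tr}} = -\widetilde{p}\, \beta.
\]

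For item (i), since ${\bf y_{a^1}}$ and ${\bf y_{a^3}}$ are supported in disjoint blocks of coordinates, only $E = -\frac{q}{(\widetilde{p})^2(p-q)}\, {\bf u}^A ({\bf v}^C)^T$ contributes, so the factored form immediately yields
\[
{\bf y}^T_{\bf a^1}\, Q^{-1}_{{\bf a^1},{\bf a^2}}\, {\bf y}_{\bf a^3} = -\frac{q}{(\widetilde{p})^2(p-q)} \bigl({\bf y}_{a^1,\mathrm{tr}}^T {\bf u}^A\bigr)\bigl(({\bf v}^C)^T {\bf y}_{a^3,\mathrm{tr}}\bigr) = -\frac{\alpha\beta\, q}{p-q},
\]
and the equality with ${\bf y}^T_{\bf a^3}\, Q^{-1}_{{\bf a^1},{\bf a^2}}\, {\bf y}_{\bf a^1}$ holds by symmetry of $Q^{-1}_{{\bf a^1},{\bf a^2}}$.

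For items (ii) and (iii), the block structure reduces the computation to the $(A^{-1}+G)$-block for ${\bf y_{a^1}}$ and the $(C^{-1}+H)$-block for ${\bf y_{a^3}}$. The $G$- and $H$-contributions factor exactly as in (i), producing $-\alpha^2 q/(p-q)$ and $-\beta^2 q/(p-q)$ respectively; this accounts for the second term in each formula. It therefore suffices to establish
\[
{\bf y}^T_{a^1,\mathrm{tr}}\, A^{-1}\, {\bf y}_{a^1,\mathrm{tr}} = -2\alpha - (n_3-1) + \frac{\widetilde{q} - (\widetilde{q})^*}{\widetilde{p}},
\]
and the analogous identity for $C^{-1}$ obtained by the substitutions $\alpha\leftrightarrow\beta$, $\widetilde{q}\leftrightarrow\widetilde{p}-\widetilde{q}$, $(\widetilde{q})^*\leftrightarrow\widetilde{p}-(\widetilde{q})^*$, $n_3\leftrightarrow n_1$ (noting that $\widetilde{p}-(\widetilde{q})^*$ is the multiplicative inverse of $\widetilde{p}-\widetilde{q}$ modulo $\widetilde{p}$).

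The self-intersection identity above is the main obstacle, and it is the lens-space analog of Lemma~\ref{lem: dpq} applied to the plumbing of $L(\widetilde{p},\widetilde{q})$. After reversing coordinates so that $A$ becomes $\bar{A}=M(a^1_1,\ldots,a^1_{n_1})$ and ${\bf y}_{a^1,\mathrm{tr}}$ becomes ${\bf z}=[a^1_1-2,\ldots,a^1_{n_1}-2]^T$, the quadratic form ${\bf z}^T \bar{A}^{-1} {\bf z}$ can be evaluated by the same technique as Lemma~\ref{lem: dpq}: one computes the dot product of ${\bf z}$ against each column of $\bar{A}^{-1}$ using Lemma~\ref{lem: firstrow} and telescopes, obtaining
\[
2n_1 - 1 - (a^1_1+\cdots+a^1_{n_1}) - \frac{1}{[a^1_{n_1},\ldots,a^1_2,a^1_1-1]}.
\]
Two final ingredients bring this into the desired shape. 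First, the Riemenschneider point rule, which controls the dual strings $(a^1_1,\ldots,a^1_{n_1})$ and $(a^3_1,\ldots,a^3_{n_3})$, lets us rewrite $\sum a^1_i$ in terms of $n_1$, $n_3$, $\widetilde{q}$, $(\widetilde{q})^*$ so that $2n_1-1-\sum a^1_i$ combines cleanly with $\alpha=-1+(1+\widetilde{q})/\widetilde{p}$ to produce the summand $-2\alpha-(n_3-1)$. Second, \cite[Lemma~A4]{OrlikWagreich77} identifies the reversed continued fraction $[a^1_{n_1},\ldots,a^1_2,a^1_1-1]$ with an explicit fraction whose reciprocal, combined with $\alpha$, yields precisely $(\widetilde{q}-(\widetilde{q})^*)/\widetilde{p}$. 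Item (iii) then follows from (ii) by the evident symmetry that swaps $A$ with $C$ and the roles of $\widetilde{q}$ with $\widetilde{p}-\widetilde{q}$ (and correspondingly $(\widetilde{q})^*$ with $\widetilde{p}-(\widetilde{q})^*$, $\alpha$ with $\beta$, and $n_1$ with $n_3$).
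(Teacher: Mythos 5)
Your treatment of item (i) and of the $G$- and $H$-contributions in (ii) and (iii) is correct and matches the paper: the rank-one factorizations of $E$, $G$, $H$ together with the dot products ${\bf y}_{a^1,\mathrm{tr}}^T{\bf u}^A=-\widetilde{p}\,\alpha$ and $({\bf v}^C)^T{\bf y}_{a^3,\mathrm{tr}}=-\widetilde{p}\,\beta$ give exactly the terms $-\alpha\beta\,q/(p-q)$, $-\alpha^2 q/(p-q)$, $-\beta^2 q/(p-q)$. The gap is in your evaluation of the pure $A^{-1}$ (and $C^{-1}$) quadratic form. The claimed ``lens-space analog of Lemma~\ref{lem: dpq},''
\[
{\bf z}^T\bar{A}^{-1}{\bf z}=2n_1-1-(a^1_1+\cdots+a^1_{n_1})-\frac{1}{[a^1_{n_1},\ldots,a^1_2,a^1_1-1]},
\]
is false: for $n_1=1$, $a^1_1=2$ the left side is $0$ while your formula gives $-2$, and for $a^1_1=3$ the left side is $-1/3$ while your formula gives $-5/2$. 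The pattern of Lemma~\ref{lem: dpq} does not transfer because there the matrix is the \emph{shifted} one, $\widetilde{B}=M(a^2_0-1,a^2_1,\ldots,a^2_{n_2})$, paired with the unshifted rotation entry $a^2_0-2$, whereas your block is the honest inverse of $\bar A=M(a^1_1,\ldots,a^1_{n_1})$; moreover Lemma~\ref{lem: firstrow} only supplies the first and last columns of such an inverse, so the proposed ``dot product against each column and telescope'' is not available from the tools you cite. The error cannot be absorbed by your two final ingredients: using $\sum a^1_i=2n_1+n_3-1$ (Riemenschneider), matching your expression to the target $-2\alpha-(n_3-1)+(\widetilde{q}-(\widetilde{q})^*)/\widetilde{p}=3-n_3-\frac{2+\widetilde{q}+(\widetilde{q})^*}{\widetilde{p}}$ would force $\frac{1}{[a^1_{n_1},\ldots,a^1_2,a^1_1-1]}=\frac{2+\widetilde{q}+(\widetilde{q})^*}{\widetilde{p}}-3$, whose left side is positive while the right side is at most $-1$.

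What is actually needed is ${\bf y}_{a^1,\mathrm{tr}}^T A^{-1}{\bf y}_{a^1,\mathrm{tr}}=3-n_3-\frac{2+\widetilde{q}+(\widetilde{q})^*}{\widetilde{p}}$, and the paper obtains this not by an elementary column computation but by recognizing the quantity as $c_1^2$ of the canonical Stein structure on the negative-definite linear plumbing bounding $L(\widetilde{p},(\widetilde{q})^*)$, so that $c_1^2=\theta(\xi_{can})+3\sigma+2\chi=\theta(\xi_{can})-n_1+2$, then invoking the lens-space formula $\theta(\xi_{can})=-\frac{2+\widetilde{q}+(\widetilde{q})^*}{\widetilde{p}}-I(\widetilde{p}/\widetilde{q})$ from Proposition~9.3 of \cite{EtnyreOzbagciTosun2025} and the identity $I(\widetilde{p}/\widetilde{q})=n_3-n_1-1$ from the Riemenschneider point rule. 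If you want a self-contained derivation instead of citing that proposition, you must redo the quadratic-form computation for the unshifted matrix correctly; as it stands, items (ii) and (iii) of your argument do not go through. Item (iii) by the stated symmetry swap is fine once (ii) is repaired.
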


\begin{proof}  To prove the equalities stated in Item~(i), we first observe by symmetry that  
\[
{\bf y}^T_{\bf a^1} Q^{-1}_{{\bf a^1}, {\bf a^2}} {\bf y_{a^3}} = {\bf y}^T_{\bf a^3} Q^{-1}_{{\bf a^1}, {\bf a^2}} {\bf y_{a^1}},
\] 
and hence it suffices to prove that  ${\bf y}^T_{\bf a^1} Q^{-1}_{{\bf a^1}, {\bf a^2}} {\bf y_{a^3}}  =- \alpha \beta \dfrac{q}{p-q}$. It is clear that only the matrix $E$ in Figure~\ref{fig: inversematrix} is involved in the calculation. Note that the first row of $E$ is given by $-\dfrac{q}{(\widetilde{p})^2(p-q)} ({\bf v}^C)^T$ and moreover, by Lemma~\ref{lem: firstrow}, the dot product of  $({\bf v}^C)^T$ with $[a^3_1-2 \; \; a^3_2-2 \; \;  \cdots \; a^3_{n_3}-2]^T$ (which is obtained by truncating ${\bf y_{a^3}}$  by removing the zeros at the beginning) gives $-1+ \widetilde{q}$. Therefore,  the product of the first row of $E$ with $[a^3_1-2 \; \; a^3_2-2 \; \;  \cdots \; a^3_{n_3}-2]^T$  is equal to  $\dfrac{1- \widetilde{q}}{(\widetilde{p})^2} \dfrac{q}{p-q}.$ Since all the rows of $E$ are given by multiples of the first row by the components of  the vector $ {\bf u}^A$, and the dot product of $[a^1_{n_1}-2 \;\; a^1_{n_1-1}-2 \;\; \cdots \;\; a^1_1-2]^T$ (which is obtained by truncating ${\bf y}_{\bf a^1}$ by removing the zeros at the end) with the vector ${\bf u}^A$ is equal to  $-(1+ \widetilde{q} -\widetilde{p})$ as observed in the proof of Lemma~\ref{lem: xy},  it follows that 
\[
{\bf y}^T_{\bf a^1} Q^{-1}_{{\bf a^1}, {\bf a^2}} {\bf y_{a^3}} = -(1+ \widetilde{q} -\widetilde{p}) \dfrac{1- \widetilde{q}}{(\widetilde{p})^2} \dfrac{q}{p-q}= - \alpha \beta \dfrac{q}{p-q},
\] 
which finishes the proof of Item~(i).

To prove Item~(ii), it is clear that only the top-left block $A^{-1} + G$  in Figure~\ref{fig: inversematrix} is involved in the calculation. We have 
\begin{equation} \label{eq: yay}
\begin{split}
{\bf y}^T_{\bf a^1} Q^{-1}_{{\bf a^1}, {\bf a^2}} {\bf y_{a^1}} &=  [a^1_{n_1}-2 \;\;  \cdots \;\; a^1_1-2] (A^{-1} + G) [a^1_{n_1}-2 \;\; \cdots \;\; a^1_1-2]^T  \\
 & =[a^1_{n_1}-2 \;\; \cdots \;\; a^1_1-2] A^{-1} [a^1_{n_1}-2 \;\; \cdots \;\; a^1_1-2]^T\\  & 
 \quad\quad +[a^1_{n_1}-2 \;\; \cdots \;\; a^1_1-2] G [a^1_{n_1}-2 \;\;  \cdots \;\; a^1_1-2]^T. \\ \end{split}
\end{equation}
We claim that the first term on the right in Equation ~\eqref{eq: yay} is equal to 
\[ 
-2\alpha  -(n_3-1) + \dfrac{\widetilde{q} - (\widetilde{q})^*}{\widetilde{p}},
\] 
and the second term is equal to 
\[
- \alpha^2 \dfrac{q}{p-q},
\] 
which together proves the formula in Item~(ii). To prove the first claim, we use the well-known fact that $\widetilde{p}/\widetilde{q}=[a^1_1, a^1_2, \ldots, a^1_{n_1}]$  if and only if $[a^1_{n_1}, a^1_{n_1-1}, \ldots, a^1_1]=  \widetilde{p} /  (\widetilde{q})^*$, where $(\widetilde{q})^*$ is  the multiplicative inverse of  $\widetilde{q}$ mod $\widetilde{p}$, see \cite[Lemma~A4]{OrlikWagreich77}. Next we observe that the first term 
\[
[a^1_{n_1}-2 \;\; \cdots \;\; a^1_1-2] A^{-1} [a^1_{n_1}-2 \;\; \cdots \;\; a^1_1-2]^T
\] 
on the right of Equation~(\ref{eq: yay}) can be calculated as follows. By Proposition 9.3 in our earlier work \cite{EtnyreOzbagciTosun2025}, we know that $$\theta(\xi_{can})=   -\dfrac{2+\widetilde q+ (\widetilde{q})^*}{\widetilde{p}} -I(\widetilde{p} /  \widetilde{q}) $$ for the canonical contact structure $\xi_{can}$ on the lens space $L( \widetilde{p},  (\widetilde{q})^*)$. By definition of $\theta(\xi_{can})$, we have  $$c_1^2 = \theta(\xi_{can})+3 \sigma +2 \chi= \theta(\xi_{can})+3 (-n_1) 
+2(n_1+1)=\theta(\xi_{can})-n_1+2.$$ We observe that $c_1^2 = [a^1_{n_1}-2 \;\; \cdots \;\; a^1_1-2] A^{-1} [a^1_{n_1}-2 \;\; \cdots \;\; a^1_1-2]^T$ and combining the last two equations above, and using the definition of $I(\widetilde{p} /  \widetilde{q})$, we see that the first term on the right of Equation~(\ref{eq: yay}) is equal to
\begin{equation} \label{eq: contfrac}
 -\dfrac{2+\widetilde q+ (\widetilde{q})^*}{\widetilde{p}} - \sum_{i=1}^{n_1} (a^1_i -3) -n_1+2. 
 \end{equation}
 Next, we observe the general fact that 
\[
I(\widetilde{p} /  \widetilde{q})= \sum_{i=1}^{n_1} (a^1_i -3) = n_3-n_1-1,
\] 
where $n_3$ is the length of the continued fraction $\widetilde{p}/(\widetilde{p}-\widetilde{q})=[a^3_1, a^3_2, \ldots, a^3_{n_3}]$ dual to $\widetilde{p}/\widetilde{q}=[a^1_1, a^1_2, \ldots, a^1_{n_1}]$, which can be proved by induction on $n_1$ using the Riemenschneider point rule, \cite{Riemenschneider1974}. Plugging this back in the Equation~\eqref{eq: contfrac}, we see that the first term in Equation~\eqref{eq: yay} is equal to 
\begin{equation} \label{eq: first}
 -\dfrac{2+\widetilde q+ (\widetilde{q})^*}{\widetilde{p}} + 3-n_3.
 \end{equation}
 On the other hand, 
 \begin{equation} \label{eq: alpha}
\begin{split}
 -2\alpha  -(n_3-1) + \dfrac{\widetilde{q} - (\widetilde{q})^*}{\widetilde{p}} &=  -2\left(-1 + \dfrac{1+\widetilde{q}}{\widetilde{p}} \right)  -(n_3-1) + \dfrac{\widetilde{q} - (\widetilde{q})^*}{\widetilde{p}}  \\
 & = -\dfrac{2+q+ (\widetilde{q})^*}{\widetilde{p}} + 3-n_3.  \end{split}
\end{equation}
By comparing Equation~\eqref{eq: first} and  Equation~(\ref{eq: alpha}), we conclude that the first term in the Equation~\eqref{eq: yay} is as claimed. 

To prove the second claim, we observe that the first row of $G$ is $-\dfrac{q}{(\widetilde{p})^2(p-q)} {\bf u}^A$ and the dot product of ${\bf u}^A$ with  $[a^1_{n_1}-2 \;\; a^1_{n_1-1}-2 \;\; \cdots \;\; a^1_1-2]^T$ is equal to $1 + \widetilde{q} - \widetilde{p}$, as observed in the proof of Lemma~\ref{lem: xy}.  It follows that the second term 
 \[
[a^1_{n_1}-2 \;\; \cdots \;\; a^1_1-2] G [a^1_{n_1}-2 \;\;  \cdots \;\; a^1_1-2]^T
 \]  
 in the Equation~\eqref{eq: yay} is equal to 
 \[
 -(1 + \widetilde{q} - \widetilde{p})^2 \dfrac{q}{(\widetilde{p})^2(p-q)}=- \alpha^2 \dfrac{q}{p-q}.
 \]
 
 The proof of Item~(iii) is very similar to Item~(ii), where only the bottom-right block $C^{-1} + H$  in Figure~\ref{fig: inversematrix} is involved in the calculation. 
\end{proof}

\begin{lemma} \label{lem: yy} We have 
$${\bf y}^T Q^{-1}_{{\bf a^1}, {\bf a^2}} {\bf y}  = 2\left(\dfrac{\widetilde{p} -2}{\widetilde{p}} \right)- (n_1+n_3-2) - \left(\dfrac{\widetilde{p} -2}{\widetilde{p}} \right)^2 \dfrac{q}{p-q}$$
\end{lemma}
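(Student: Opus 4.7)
The plan is to expand ${\bf y}^T Q^{-1}_{{\bf a^1}, {\bf a^2}} {\bf y}$ using the decomposition ${\bf y} = {\bf y_{a^1}} + {\bf y_{a^3}}$ and then invoke Lemma~\ref{lem: 3items} for each of the three resulting quadratic terms. Bilinearity gives
\[
{\bf y}^T Q^{-1}_{{\bf a^1}, {\bf a^2}} {\bf y} = {\bf y_{a^1}}^T Q^{-1}_{{\bf a^1}, {\bf a^2}} {\bf y_{a^1}} + {\bf y_{a^3}}^T Q^{-1}_{{\bf a^1}, {\bf a^2}} {\bf y_{a^3}} + 2\,{\bf y_{a^1}}^T Q^{-1}_{{\bf a^1}, {\bf a^2}} {\bf y_{a^3}},
\]
since $Q^{-1}_{{\bf a^1}, {\bf a^2}}$ is symmetric, and substituting the three values from Lemma~\ref{lem: 3items} yields
\[
{\bf y}^T Q^{-1}_{{\bf a^1}, {\bf a^2}} {\bf y} = -2(\alpha+\beta) - (\alpha^2 + 2\alpha\beta + \beta^2)\dfrac{q}{p-q} - (n_1+n_3-2) + \dfrac{\widetilde{q} - (\widetilde{q})^*}{\widetilde{p}} + \dfrac{(\widetilde{q})^* - \widetilde{q}}{\widetilde{p}}.
\]

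Next I would simplify. The last two terms cancel outright, and the middle bracket is the perfect square $(\alpha+\beta)^2$. Using the identity $\alpha+\beta = \dfrac{2-\widetilde{p}}{\widetilde{p}} = -\dfrac{\widetilde{p}-2}{\widetilde{p}}$ recorded just before Lemma~\ref{lem: xy}, the linear term becomes $2\dfrac{\widetilde{p}-2}{\widetilde{p}}$ and the quadratic term becomes $\left(\dfrac{\widetilde{p}-2}{\widetilde{p}}\right)^2 \dfrac{q}{p-q}$. Collecting the remaining pieces gives exactly the claimed formula
\[
{\bf y}^T Q^{-1}_{{\bf a^1}, {\bf a^2}} {\bf y}  = 2\left(\dfrac{\widetilde{p} -2}{\widetilde{p}} \right) - (n_1+n_3-2) - \left(\dfrac{\widetilde{p} -2}{\widetilde{p}} \right)^2 \dfrac{q}{p-q}.
\]

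The proof is essentially a bookkeeping exercise once Lemma~\ref{lem: 3items} is in hand, so no step is genuinely hard: the only thing to watch is that the asymmetric terms $\pm((\widetilde{q})^* - \widetilde{q})/\widetilde{p}$ appearing in Items~(ii) and~(iii) of Lemma~\ref{lem: 3items} enter with opposite signs and must cancel. This cancellation is precisely what reflects the symmetry between the two complementary legs and is the reason the final answer depends only on $\widetilde{p}$ (and not on $\widetilde{q}$).
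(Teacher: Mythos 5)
Your proof is correct and is essentially the same as the paper's: both expand ${\bf y}^T Q^{-1}_{{\bf a^1}, {\bf a^2}} {\bf y}$ bilinearly via ${\bf y}={\bf y_{a^1}}+{\bf y_{a^3}}$, substitute the three values from Lemma~\ref{lem: 3items}, note the cancellation of the $\pm(\widetilde{q}-(\widetilde{q})^*)/\widetilde{p}$ terms, and use $\alpha+\beta=(2-\widetilde{p})/\widetilde{p}$ to collect the result into the stated formula.
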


\begin{proof} It follows immediately from Lemma~\ref{lem: 3items} that 
\begin{equation} \label{eq: y}
\begin{split}
{\bf y}^T Q^{-1}_{{\bf a^1}, {\bf a^2}} {\bf y}  & =({\bf y_{a^1}} + {\bf y_{a^3}})^T Q^{-1}_{{\bf a^1}, {\bf a^2}}  ({\bf y_{a^1}} + {\bf y_{a^3}}) \\
 & ={\bf y}^T_{\bf a^1} Q^{-1}_{{\bf a^1}, {\bf a^2}} {\bf y_{a^1}} + {\bf y}^T_{\bf a^1} Q^{-1}_{{\bf a^1}, {\bf a^2}} {\bf y_{a^3}} + {\bf y}^T_{\bf a^3} Q^{-1}_{{\bf a^1}, {\bf a^2}} {\bf y_{a^1}} + {\bf y}^T_{\bf a^3}Q^{-1}_{{\bf a^1}, {\bf a^2}} {\bf y_{a^3}}. \\  
 & = -2 (\alpha + \beta) - (n_1+n_3-2) - (\alpha + \beta)^2 \dfrac{q}{p-q} \\
 & = 2\left(\dfrac{\widetilde{p} -2}{\widetilde{p}} \right)- (n_1+n_3-2) - \left(\dfrac{\widetilde{p} -2}{\widetilde{p}} \right)^2 \dfrac{q}{p-q}.
\end{split}
\end{equation}
\end{proof}

\begin{proof}[Proof of Proposition~\ref{prop: thetaSFS}] By combining Lemma~\ref{lem: dpq},  Lemma~\ref{lem: xy}, and  Lemma~\ref{lem: yy}, we obtain
\begin{equation} \label{eq: c1}
\begin{split}
c_1^2 (X_{{\bf a^1}, {\bf a^2}}) & ={\bf r}^T_{can} Q^{-1}_{{\bf a^1}, {\bf a^2}} {\bf r}_{can}\\
 & =({\bf x}+{\bf y})^T Q^{-1}_{{\bf a^1}, {\bf a^2}} ({\bf x}+{\bf y}) \\
 & =  {\bf x}^T Q^{-1}_{{\bf a^1}, {\bf a^2}} {\bf x} + {\bf x}^T Q^{-1}_{{\bf a^1}, {\bf a^2}} {\bf y} + {\bf y}^T Q^{-1}_{{\bf a^1}, {\bf a^2}} {\bf x} + 
{\bf y}^T Q^{-1}_{{\bf a^1}, {\bf a^2}} {\bf y} \\ & = {\bf x}^T Q^{-1}_{{\bf a^1}, {\bf a^2}} {\bf x} +2 {\bf x}^T Q^{-1}_{{\bf a^1}, {\bf a^2}} {\bf y} + 
{\bf y}^T Q^{-1}_{{\bf a^1}, {\bf a^2}} {\bf y}  \\
 & =  2n_2+3-(a^2_0 +a^2_1+\cdots+a^2_{n_2}) - \dfrac{1}{[a^2_{n_2}, a^2_{n_2-1}, \ldots, a^2_1, a^2_0-1]} \\ 
 & \quad \quad +  \dfrac{2(2-\widetilde{p})}{\widetilde{p}} \left(1-\dfrac{1}{p-q}\right) - \left(\dfrac{\widetilde{p} -2}{\widetilde{p}} \right)^2 \dfrac{q}{p-q} -  (n_1+n_3-2) + \dfrac{2(\widetilde{p} -2)}{\widetilde{p}}.  \end{split}
\end{equation}

Therefore, 
\begin{equation} \label{eq: theta}
\begin{split}
\theta (\xi_{can}) & = c_1^2 (X_{{\bf a^1}, {\bf a^2}}) - 3 \sigma (X_{{\bf a^1}, {\bf a^2}})- 2 \chi (X_{{\bf a^1}, {\bf a^2}}) \\
& =  c_1^2 (X_{{\bf a^1}, {\bf a^2}}) + n_2+n_1+n_3-1 \\
 & =  1- I(p/q)-\dfrac{1}{[a^2_{n_2}, a^2_{n_2-1}, \ldots, a^2_1, a^2_0-1]} +\dfrac{2(\widetilde{p}-2)}{\widetilde{p}(p-q)} -\dfrac{(\widetilde{p}-2)^2 q}{(\widetilde{p})^2(p-q)},
 \end{split}
\end{equation}
as claimed. 
\end{proof}

\section{Non-balanced contact structures and spherical $3$-manifolds} 

Recall that Proposition~\ref{dihedO}, in light of Theorem~\ref{thm: nonneg}, says that if $\xi$ is not a balanced contact structure on the small Seifert fibered space $Y(e_0; \frac{1}{2},  \frac{q}{p}, \frac{1}{2})$ with $e_0\geq 0$, then it is not filled by a symplectic rational homology ball. We depicted the plumbing diagram of  $Y(e_0; \frac{1}{2},  \frac{q}{p}, \frac{1}{2})$ in  Figure~\ref{dO}, for the convenience  of the reader, where  $p/q=[a_1, a_2, \ldots, a_k]$. 

\begin{figure}[htb]{
\begin{overpic}
{dihedralOV2}
\put(63, 2){$e_0$}
\put(120, 2){$-a_1$}
\put(247, 2){$-a_{k-1}$}
\put(313, 2){$-a_k$}
\put(-8, 2){$-2$}
\put(75, 79){$-2$}
\end{overpic}}
 \caption{The plumbing diagram of  the small Seifert fibered space $Y(e_0; \frac{1}{2},  \frac{q}{p}, \frac{1}{2})$, where $p/q=[a_1, a_2, \ldots, a_k]$.}
  \label{dO}
\end{figure}

We observe that there are exactly two tight contact structures, which we denote by $\xi^{\pm}$, on $Y(e_0; \frac{1}{2},  \frac{q}{p}, \frac{1}{2})$  that are not balanced. These are the contact structures where in their contact surgery presentation, the complementary legs are stabilized once consistently (say both positive) and the non-complementary leg is also stabilized consistently, either all positive or all negative.  The contact structure $\xi^-$(resp $\xi^+$) is when the non-complementary leg has opposite (respectively the same) signs as the complementary legs.  Our goal is then to prove that $(Y(e_0; \frac{1}{2},  \frac{q}{p}, \frac{1}{2}) , \xi^{\pm})$ does not admit a rational homology ball symplectic filling. (If one makes the opposite choice for the sign of the complementary legs, then we would get $-\xi^\pm$, but these have the same $\theta$-invariant so we will not consider them.)
This will be a corollary of the following formulas for $\theta(\xi^\pm)$. 

\begin{lemma}\label{dihedralC} Let $\frac{p'}{q'}=[a_1, \ldots, a_{k-1}]$, and assume that $p'=1, q'=0$ when $k=1$. Then
\begin{enumerate}
 \item   $\theta(\xi^-)=-(a_1+\cdots +a_k-(3k+e_0-2))-\frac{(e_0+1)p'+q'}{(e_0+1)p+q}$, and 
\item   $\theta(\xi^+)=-(a_1+\cdots +a_k-(3k+e_0-1))-\frac{(e_0+1)p'+q'}{(e_0+1)p+q}-\frac{(e_0-3)p+q+4}{(e_0+1)p+q}$.  
\end{enumerate}
\end{lemma}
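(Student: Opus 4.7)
The plan is to compute $\theta(\xi^\pm)$ via the formula $\theta = c_1^2 - 3\sigma - 2\chi$ applied to a Stein filling $W^\pm$ of $(Y,\xi^\pm)$. As recalled in Section~\ref{contactcomplementary}, each $\xi^\pm$ is encoded by contact surgeries on the link $L_1\cup L_2\cup L_3\subset (S^1\times S^2,\xi_{\mathrm{std}})$ of Figure~\ref{fig:s1s2surgery}, with the prescribed stabilization patterns: $L_1, L_3$ stabilized with a common sign, and the $L_2$-chain stabilized with the opposite (respectively the same) sign for $\xi^-$ (respectively $\xi^+$). Expanding each rational contact surgery as a chain of $(-1)$-contact surgeries on stabilized Legendrians (see \cite{DingGeigesStipsicz04}) and attaching the corresponding Weinstein $2$-handles to $S^1\times D^3$ produces Stein fillings $W^\pm$ with one $1$-handle and $k+2$ two-handles. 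Hence $\chi(W^\pm) = k+2$; surjectivity of the winding-number map on $H_1$ of the surgery link kills the $1$-handle homologically, giving $H_2(W^\pm;\Z) \cong \Z^{k+1}$; and Stein-ness gives $\sigma(W^\pm) = -(k+1)$, so $-3\sigma - 2\chi = k-1$.

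The intersection form $Q$ is the linking matrix of the surgery link in $W^\pm$: it contains a tridiagonal block $M(a_1,\ldots,a_k)$ for the $L_2$-chain, diagonal entries $-2$ on $L_1, L_3$, and coupling entries from the pairwise linking of $L_1, L_2, L_3$ in $S^1\times S^2$. The rotation vector $\mathbf{r}^\pm$ records the Legendrian rotation numbers; decomposing it as $\mathbf{r}^\pm = \mathbf{r}_c + \mathbf{r}_m^\pm$ with $\mathbf{r}_c$ supported on the complementary legs and $\mathbf{r}_m^\pm$ on the $L_2$-chain, the defining property of $\xi^\pm$ yields $\mathbf{r}_m^+ = -\mathbf{r}_m^-$ while $\mathbf{r}_c$ is common to both. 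Before inverting $Q$, I would reorganize the surgery handlebody into the standard plumbing of Figure~\ref{fig:balanced} (with $n = e_0+1$) via handle slides absorbing the $1$-handle into a new $(-1)$-framed unknot, so that each of the three legs sits over its own tridiagonal block and Lemma~\ref{lem: firstrow} applies independently to each.

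Once $Q^{-1}$ is in hand, compute $c_1^2(W^-) = (\mathbf{r}^-)^T Q^{-1} \mathbf{r}^-$ by splitting into three contributions: the complementary legs (constant in $\pm$), the $L_2$-chain (invariant under $\mathbf{r}_m\mapsto -\mathbf{r}_m$), and the mixed cross-terms (which reverse sign under the same substitution). Lemma~\ref{lem: firstrow} expresses the relevant entries of $M(a_1,\ldots,a_k)^{-1}$ in terms of $p/q$ and the auxiliary fraction $p'/q'=[a_1,\ldots,a_{k-1}]$, producing a common denominator $(e_0+1)p+q = |H_1(Y;\Z)|$ after clearing. Adding $k-1$ and using $a_1+\cdots+a_k-3k = I(p/q)$ yields the stated formula for $\theta(\xi^-)$. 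Finally, $\theta(\xi^+) - \theta(\xi^-) = -4\,\mathbf{r}_c^T Q^{-1}\mathbf{r}_m^-$, and evaluating this rational function by the same identities produces the correction $-1-\frac{(e_0-3)p+q+4}{(e_0+1)p+q}$ that distinguishes $\theta(\xi^+)$ from $\theta(\xi^-)$.

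The main obstacle is the explicit inversion of $Q$: the coupling entries between the three legs mean that $Q$ is not block-diagonal in the leg partition, so one must either Schur-invert on the $L_2$-block or first perform the handle slides above to realize Figure~\ref{fig:balanced} and then apply Lemma~\ref{lem: firstrow} directly to each of the three chains. A secondary bookkeeping task is to pin down the exact signs of the entries of $\mathbf{r}_c$ and $\mathbf{r}_m^\pm$ produced by the expansion of the contact surgeries, in particular so that the cross-term computation yields the specific numerator $(e_0-3)p+q+4$ rather than some sign-reversed variant.
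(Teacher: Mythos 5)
Your overall strategy (compute $\theta=c_1^2-3\sigma-2\chi$ on a Stein filling built from the contact surgery presentation, with rotation vectors and the inverse intersection form) is the same as the paper's, but the execution has a genuine gap: your filling does not depend on $e_0$, while the answer does. With the zero-twisting link $L_1\cup L_2\cup L_3$ in $S^1\times S^2$, the surgery coefficients cannot be $-2,\,-p/q,\,-2$ unless $e_0=0$; for $e_0>0$ one leg must absorb $e_0$ (blow-ups/Rolfsen twists), so the middle chain acquires $e_0$ extra components with framings $-1,-2,\ldots,-2$ and the first $a$-framing becomes $-a_1-1$. The paper's filling accordingly has one $1$-handle and $k+e_0+2$ two-handles (then the $1$-handle is traded for a contact $(+1)$-surgery), giving $\chi=k+e_0+4$ and $\sigma=-(k+e_0+1)$. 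Your counts $\chi=k+2$, $H_2\cong\Z^{k+1}$, $\sigma=-(k+1)$, hence $-3\sigma-2\chi=k-1$, describe $Y(0;\tfrac12,\tfrac qp,\tfrac12)$ only; with that model the $e_0$ in the integer part $3k+e_0-2$ can never appear, and your stated intersection matrix (tridiagonal $M(a_1,\ldots,a_k)$ plus two $-2$'s coupled only through the pairwise linking of the $L_i$) likewise omits $e_0$. Separately, ``Stein-ness gives $\sigma=-(k+1)$'' is not an argument: Stein fillings need not be negative definite (the paper's filling has $b^+=1$, and its signature is pinned down via the negative-definiteness of the reversed-orientation plumbing, following Stipsicz--Szab\'o--Wahl), so the signature needs its own justification in whatever model you use.

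Two further points would derail the computation as written. First, ``handle slides absorbing the $1$-handle into a new $(-1)$-framed unknot'' is not a $4$-manifold--preserving move (replacing a dotted circle by a $0$- or $(+1)$-framed unknot changes $X$, only the boundary survives), and Figure~\ref{fig:balanced} for $e_0\ge 0$ has $n=1$, not $n=e_0+1$; you cannot compute $\chi,\sigma$ in the $1$-handle model and $c_1^2$ in a different model without redoing the bookkeeping. Second, two claimed identities signal that the computation was not carried out: $(e_0+1)p+q$ is not $|H_1(Y;\Z)|$ (it is the determinant of the extended middle chain; $|H_1(Y)|=4\big((e_0+2)p+q\big)$), and your correction term gives $\theta(\xi^+)-\theta(\xi^-)=-1-\frac{(e_0-3)p+q+4}{(e_0+1)p+q}$, whereas the statement forces $+1-\frac{(e_0-3)p+q+4}{(e_0+1)p+q}$. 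For contrast, the paper blows up the plumbing of Figure~\ref{dO} until the central framing is $0$, takes the Stein handlebody with framings $-2,-2,-1,-2,\ldots,-2,-a_1-1,-a_2,\ldots,-a_k$ over a $1$-handle, converts the $1$-handle to contact $(+1)$-surgery, and computes $c_1^2=\mathbf{r}_\pm^T I_X^{-1}\mathbf{r}_\pm$ by solving $I_X\mathbf{x}=\mathbf{r}_\pm$ directly rather than inverting $I_X$ -- a detail worth adopting, since it avoids the Schur-complement analysis you anticipate as the main obstacle.
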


\begin{proof} The proof of Lemma~\ref{dihedralC} is an easier version of the calculations that were done in Section~\ref{subsec: theta}, and therefore, we just provide the setup and leave some straightforward details to the interested reader. 

We start by describing $\xi^{\pm}$ by appropriate contact surgery diagrams. Note that the plumbing diagram of $Y(e_0; \frac{1}{2},  \frac{q}{p}, \frac{1}{2})$ depicted in Figure~\ref{dO}  is not immediately suitable for describing  contact surgery diagrams of $\xi^{\pm}$. To remedy this we apply a sequence of blow-ups between the central vertex and the one standing next to it on its right-hand side, until the central curve has framing zero. This adds an additional $e_0$ many vertices (2 handles) to the leg corresponding to the singular fiber. After this, we can realize the manifold as a Stein handlebody diagram by converting the 0-framed curve to a Stein $1$-handle and attaching to it $k+e_0+2$ Stein $2$-handles with framings 
\[
-2, -2, \underbrace{-1, -2, \ldots, -2}_{e_0}, -a_1-1, -a_2, \ldots, -a_k,
\] 
respectively.  To obtain contact surgery diagrams describing $\xi^{\pm}$, we convert the Stein $1$-handle into a contact $(+1)$-surgery (this still describes Stein fillable contact structures). Let $(X, J^{\pm})$ denote the Stein manifold where the complementary legs are stabilized the same way (say positive) and the non-complementary leg includes stabilizations that are either all positive or all negative. In this description, the Stein structures $J^{\pm}$ induce $\xi^{\pm}$ on the boundary.

We observe that $\chi(X)=k+e_0+4$, since the handle-decomposition of $X$ consists of  $k+e_0+3$ two-handles and a zero-handle. As for the signature we first note that $Y=Y(e_0; \frac{1}{2},  \frac{q}{p}, \frac{1}{2})$  is an L-space, and the plumbing graph corresponding to $-Y$ is negative definite, since it arises as the {\em oriented} link of quotient surface singularity. (Indeed, $-Y$ will be the dihedral-type spherical $3$-manifold $D(\widetilde{p}, \widetilde{q})$, canonically oriented as the link of a complex surface singularity, where $\widetilde{p}=(e_0+2)p+q$ and $\widetilde{q}=p$.)
So, as argued in \cite[Section~8.1]{StipsiczSzaboWahl08}, it must be that $b^+(X)=1$, and hence $\sigma(X)=-(k+e_0+1)$. The main ingredient of the proof of Lemma~\ref{dihedralC} is the  calculation of the square of the first Chern class 
\begin{equation}\label{chern}
c^2_1(X, J^{\pm})={\bf r}_{\pm}^{~T} I_X^{-1} {\bf r}_{\pm}, 
\end{equation}
where ${\bf r}_{\pm}=\begin{bmatrix}
0 & 1&1& 0 \cdots 0&\pm (a_1-1)& \pm (a_2-2) \cdots  & \pm(a_k-2)
\end{bmatrix}^T$ is the rotation vector, and $I_X$ is the intersection matrix of $X$ described above. Instead of computing  $I_X^{-1}$ similar to our calculations in  Section~\ref{subsec: theta}, it is much easier here to solve the linear system $I_X{\bf x}=\rot_{\pm}$ for ${\bf x}$. This will result in the computations
\[
c_1^2(X, J^-)=-(a_1+\cdots +a_k-(2k-1)) -\dfrac{1}{[a_k, \ldots, a_2, a_1+1, \underbrace{2,\ldots, 2}_{e_0}]}
\]
\noindent where we note that $$\frac{(e_0+1)p'+q'}{(e_0+1)p+q}=\dfrac{1}{[a_k, \ldots, a_2, a_1+1, 2,\ldots, 2]},$$ and
\[
c_1^2(X,J^+)=-(a_1+\cdots+a_k-2k)-\frac{(e_0+1)p'+q'}{(e_0+1)p+q}-\frac{(e_0-3)p+q+4}{(e_0+1)p+q},
\]
which in turn, yields the formulas in the lemma. The details of this calculation are left to the reader. 
\end{proof} 

\begin{proof}[Proof of Proposition~\ref{dihedO}] 
If a contact structure on a Seifert fibered space with $e_0\geq 0$ is balanced, then whether or not it symplectically bounds a rational homology ball is determined by Theorem~\ref{thm: nonneg}. So we are left to show that if the contact structure is not balanced, then it does not symplectically bound a rational homology ball. 

As we mentioned above, our proof will rely on  Lemma~\ref{dihedralC}. First, for any $e_0\geq 0$ and $\frac{p}{q}>1$, one can easily see that, $c_1^2(X, J^-) \notin \Z$ (and hence $\theta(\xi^-)\notin \Z$), since 
\[
\frac{(e_0+1)p'+q'}{(e_0+1)p+q}=\frac{1}{[a_k,\ldots, a_2,  a_1+1, 2, \ldots, 2]}<1.
\]

\noindent We note a similar equality was established in the proof of Proposition~\ref{prop: minSFS}.
In particular,  $\xi^{-}$ cannot be symplectically filled by a rational homology ball. 

Although  $\theta(\xi^{+})$ may take integer values, with the help of Theorem~\ref{lecuonas}, we will show that $\theta(\xi^+)\neq -2$. Recall that Theorem~\ref{lecuonas} characterizes exactly which small Seifert fibered spaces with complementary legs admit smooth rational homology ball fillings. To apply the theorem we first need to perform a sequence of $(-e_0-1)$ Rolfsen twists along the singular fiber with framing $-\frac{p}{q}$. The new framing for the singular fiber will be  \[-\frac{p}{(e_0+1)p+q}=-1+ \frac{1}{[\underbrace{2, \ldots, 2}_{e_0}, a_1+1, a_2, \ldots, a_k]}.\]

\noindent Now Theorem~\ref{lecuonas} says this Seifert fibered space bounds a smooth rational homology ball exactly when $r/s=[\underbrace{2, \ldots, 2}_{e_0}, a_1+1, a_2, \ldots, a_k]\in \mathcal R$. According to Lisca \cite{Lisca2007} the integer $I(r/s)$ satisfies \[I(r/s)=\sum_{i=1}^{e_0} (2-3)+(a_1+1-3)+\sum_{j=2}^k(a_j-3)\leq 1.\]  Indeed Lisca also proves that $I(r/s)=1$ exactly when $\frac{r}{s}=\frac{m^2}{mh-1}$ for some relatively prime integers $0<h<m$. 

We can expand and calculate this integer as $I(r/s)=-3k-e_0+1+(a_1+\cdots +a_k)$, so that Item~(2) of Lemma~\ref{dihedralC} reads as 
\begin{equation}\label{thetaC}
\theta(\xi^+)=-I(r/s)-\frac{(e_0+1)p'+q'}{(e_0+1)p+q}-\frac{(e_0-3)p+q+4}{(e_0+1)p+q}.   
\end{equation}

Note that the fraction terms in the formula for $\theta(\xi^{+})$ are both strictly less than one. In particular, if $I(r/s)<1$, then $\theta(\xi^+)>-2$. If $I(r/s)=1$, then we know that $\frac{r}{s}=\frac{m^2}{mh-1}$ for some relatively prime integers $0<h<m$. Since the continued fraction for $r/s$ must start with a $2$, we see that  that $\frac{m}{2}<h$. We now compute $p/q$ explicitly by using the equality $[2, \ldots, 2, a_1+1, a_2, \ldots, a_k]=\frac{m^2}{mh-1}$. We can rewrite this as  \[[\underbrace{2, \ldots, 2}_{e_0-1}, a_1+1, a_2, \ldots, a_k]=(2-\frac{m^2}{mh-1})^{-1}=\frac{mh-1}{2(mh-1)-m^2}\] 
and repeating this $e_0-1$ more times gives that \[[a_1+1, a_2, \ldots, a_k]=\frac{e_0(mh-1)-(e_0-1)m^2}{(e_0+1)(mh-1)-e_0m^2}.\] 
 We point out that if $e_0=0$ (that is when there are no $2$'s at the beginning) the proof still works. Finally,  we ``move" the $+1$ in the first term in the continued fraction to the right-hand side to obtain
\[[a_1, a_2, \ldots, a_k]=\frac{m^2-(mh-1)}{(e_0+1)(mh-1)-e_0m^2}\] which implies that $p=m^2-(mh-1)$ and $q=(e_0+1)(mh-1)-e_0m^2=mh-1-pe_0$.

We next explicitly calculate $p'/q'$. Since $[a_1, a_2, \ldots, a_k]=\frac{m^2-(mh-1)}{(e_0+1)(mh-1)-e_0m^2}$, we have $[a_k, a_{k-1}, \ldots, a_1]=\frac{m^2-(mh-1)}{t^*}$ where $0<t^*<p$ is the inverse of $(e_0+1)(mh-1)-e_0m^2=(mh-1-pe_0)$ mod $p=m^2-mh+1$, and by \cite[Lemma~A4]{OrlikWagreich77} we know that $t^*=p'$. Indeed, one can easily check that $p'=(m-h)^2$. To see this, we first observe that $p'=(m-h)^2=p^2+h^2-mh-1$, and calculate 
\[
\begin{split}
p'((mh-1-pe_0))&=(p^2+h^2-mh-1)(mh-1-pe_0)\\
& \equiv (h^2-mh-1)(mh-1) \mod p\\
&=-h^2(m^2-mh+1)+1 \\
& \equiv 1 \mod p.
\end{split}
\] since $p=m^2-mh+1.$

Similarly one can calculate that $q'=(2e_0+1)mh-(e_0+1)h^2-e_0m^2-1$.  We emphasize a surprising observation 
\begin{equation}\label{eq: pq'}
p-2=(e_0+1)p'+q',
\end{equation}
that will be crucial below.

Recall that we want to prove that $\theta(\xi^+)\neq -2$. Since we are assuming $I(r/s)=1$, Equation ~(\ref{thetaC}) reduces to \[\theta(\xi^+)=-1-\frac{(e_0+1)p'+q'}{(e_0+1)p+q}-\frac{(e_0-3)p+q+4}{(e_0+1)p+q}. \] 
So, assuming  $\theta(\xi^+)=-2$,  we obtain the equation 
$$(e_0+1)p'+q'+4-4p=0,$$
which is a impossible, since by using Equation~\ref{eq: pq'},  we can explicitly calculate that  \[(e_0+1)p'+q'+4-4p=-3p+2<0.\]

Thus, $\theta(\xi^+)\neq -2$, and we conclude that  $\xi^+$ cannot be symplectically filled by a rational homology ball, which finishes our proof.   
\end{proof}

Finally, as a byproduct of our work on small Seifert fibered spaces, we are ready to give a proof of Theorem~\ref{classifyfillings}, which explicitly describes a complete classification of contact structures on oriented spherical $3$-manifolds which are symplectically fillable by rational homology balls.

\begin{proof}[Proof of Theorem~\ref{classifyfillings}]
As noted in the introduction, we only need to determine which of the ``oppositely oriented" dihedral-type spherical $3$-manifolds admit contact structures having symplectic rational homology ball fillings. We recall that any dihedral-type spherical $3$-manifold with its canonical orientation  is of the form $Y(e_0;1/2, s,1/2)$ with $e_0\leq -2$, as a small Seifert fibered space. So any ``oppositely oriented" dihedral-type spherical $3$-manifold has the same form but with $e_0\geq -1$.  To prove our theorem, we will go through a case by case analysis below based on the possible values of $e_0$.

Recall that in Theorem~\ref{thm: minusone} and the subsequent remarks, we discussed the classification of contact structures that admit symplectic rational homology ball fillings, on a small Seifert fibered space with (uniquely) complementary legs and $e_0=-1$. For relatively prime integers $0<h<m$ and $n\geq 2$ or $m=1, h=0$ and $n\geq 1$, we set 
\begin{equation}\label{eq:Ymhn}
Y_{m,h,n}:=Y\left(-1;\dfrac{1}{2}, \dfrac{m^2}{nm^2-mh+1}, \dfrac{1}{2}\right)=-D((n+1)m^2-mh+1, nm^2-mh+1),
\end{equation} 
where the second equality is obtained by converting the small Seifert fibered space notation used in Theorem~\ref{thm: minusone}  to the dihedral-type spherical $3$-manifold notation.  Note that in Equation~(\ref{eq:Ymhn}), we intentionally excluded the case where $n=1$ and $0 < h < m$ are coprime. This is because the conversion given by the second equality does not work in that case. We will return to this case at the end of the proof and appropriately define $Y_{m,h,1}$, when $0 < h < m$ are coprime.

If $m= 1$ and  $h=0$, then the $3$-manifold $Y_{1,0,n}=-D(n+2, n+1)$ is the same as $M_{n+1}$ in Remark~\ref{extra} (1). In particular, according to that remark, $-D(n+2, n+1)$ carries exactly three, respectively two, distinct contact structures with symplectic rational homology ball fillings if $n=1$, respectively $n>1$. This proves  (c) and (d) in Item~(2) of Theorem~\ref{classifyfillings}. 

When $0<h<m$, we have three cases to consider: $n>2$, $n=2$ and $n=1$. For the first case, the $3$-manifold $Y_{m,h,n}$ is uniquely complementary as a small Seifert fibered space, by definition, and hence Theorem~\ref{thm: minusone} and Theorem~\ref{thmMatkovic}, allowing all possible contact structures on the lens space $L(m^2, mh-1)$, captures all fillable contact structures on $Y_{m,h,n}$. Moreover, the theorem and its proof provide a list of  four fillable contact structures with symplectic rational homology ball fillings. With this, the task at hand is to determine if these are pairwise distinct. In \cite[Theorem~$1.1$]{GhigginiLiscaStipsicz07}, a complete classification of tight contact structures on $Y_{m,h,n}$  is given along with explicit contact surgery diagrams.  A careful analysis of their arguments shows that the four contact structures just constructed are indeed distinct, this completes Item~(2)(b) of Theorem~\ref{classifyfillings} when $n>2$.  

The arguments in the previous paragraph are valid when $n=2$, yielding four distinct tight contact structures on $Y_{m,h,2}$ with symplectic rational homology ball fillings. But the $3$-manifold  $Y_{m,h,2}$ is not uniquely complementary, as a small Seifert fibered space. So, there are other fillable contact structures to consider on $Y_{m,h,2}$.  Indeed, by consulting \cite[Theorem~1.1]{GhigginiLiscaStipsicz07} again, one can find additional contact structures with symplectic rational homology ball filling. Altogether there are six contact structures on $Y_{m,h,2}$  with symplectic rational homology ball fillings. To see this, we notice that the contact structures on $Y_{m,h,2}$ will be given by surgery on the Legendrian knot on the right of Figure~\ref{fig:LSdiagram}, where the contact surgery coefficients on the top three unknots are $-2$, while there is also a Legendrian unknot linking the third Legendrian in the diagram (labeled $-1/r_1$ there), on which one performs contact $(s)$-surgery, for the correct choice of $s$. Now the rotation vector corresponding to the Legendrian knots with $-2$ contact surgery coefficient will be $(\pm, \pm,\pm)$. As long as the first two signs are opposite and we do the contact $(s)$-surgery so that the lens space bounds a rational homology ball, then our main construction from Section~\ref{eogeqm1} will give a symplectic rational homology ball. There are $3$ choices for the signs (the choices $(-, -, +)$ and $(+, +, -)$ produce the same contact structures, according to \cite{GhigginiLiscaStipsicz07} ) on the knots with $-2$ contact surgery coefficients that will work for this, and $2$ choices for the contact $(s)$-surgery. From \cite[Section~4]{GhigginiLiscaStipsicz07} we see that all six of these contact structures are distinct. We are left to see that any other contact structure on $Y_{m,h,2}$ does not bound a symplectic rational homology ball. An argument almost identical to, but easier than, the one in the proof of Proposition~\ref{dihedO} will show that the $\theta$ invariants of these contact structures are not integral, yielding the statement in Item~(2)(a) of Theorem~\ref{classifyfillings}.  

Finally, for $n=1$, and relatively prime integers $0 < h < m$  we set 
$$
Y_{m,h,1}:=Y\left(e_0; \dfrac{1}{2}, \dfrac{m^2+(e_0+1)(-m^2+mh-1)}{m^2-mh+1}, \dfrac{1}{2}\right)=-D(2m^2-mh+1, m^2-mh+1),
$$ 
where the integer $e_0 \geq 0$ is uniquely defined by the pair of integers $0 < h < m$ as in Lemma~\ref{lem:unique-e0} and the second equality is shown in Lemma~\ref{lem:conversion}. Recall that Equation~(\ref{eq:Ymhn}) excludes the case where $n=1$ and $0 < h < m$ are coprime. 

With this notation in place, Theorem~\ref{thm: nonneg}, together with Proposition~\ref{dihedO}, implies that there are exactly four contact structures on  $Y_{m, h, 1}$ that have symplectic rational homology ball fillings. This completes Item~(2)(b) of Theorem~\ref{classifyfillings} when $n=1$.
\end{proof}

\begin{lemma}\label{lem:unique-e0}
For any integers $0 < h < m$, 
there exists a unique integer $e_{0}\ge 0$ such that
\begin{equation}\label{eq:ineq-original}
\frac{-m^{2}+mh-1}{\,m^{2}+(e_{0}+1)(-m^{2}+mh-1)\,}< -1 .
\end{equation}
In fact, $e_{0}$ is uniquely determined as
\begin{equation}\label{eq:e0-formula}
e_{0}=\left\lfloor \frac{m^{2}}{m^{2}-mh+1}\right\rfloor-1.
\end{equation}
\end{lemma}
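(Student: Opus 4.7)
The plan is to reduce the inequality to a chain of plain integer inequalities on $e_0$, and then show those inequalities isolate a unique $e_0 \geq 0$. To that end, set $A := m^2 - mh + 1$, which is a positive integer because $0 < h < m$ implies $m(m-h) \geq m$ and thus $A \geq m+1$. The given inequality becomes
\[
\frac{-A}{m^2 - (e_0+1)A} < -1.
\]
A sign analysis of the denominator $D := m^2 - (e_0+1)A$ is the first step: if $D < 0$ then cross-multiplying flips the inequality to $A < D < 0$, which is impossible; if $D = 0$ the expression is undefined. So the inequality is equivalent to the conjunction $D > 0$ and $D < A$, i.e.\ $0 < m^2 - (e_0+1)A < A$.

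Rewriting these two strict inequalities gives exactly
\[
(e_0+1)A < m^2 < (e_0+2)A,
\qquad\text{equivalently}\qquad
e_0 + 1 < \frac{m^2}{A} < e_0 + 2.
\]
This means $e_0$ is uniquely determined as $\lfloor m^2/A\rfloor - 1$, \emph{provided} $m^2/A$ is not itself an integer (otherwise the strict inequalities cannot both hold for any $e_0$) and \emph{provided} the resulting value is nonnegative.

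The next step is to rule out $m^2/A \in \mathbb{Z}$. Suppose $m^2 = kA$ for some positive integer $k$. Reducing mod $m$ gives $k \equiv 0 \pmod m$, so $k = jm$ for some $j \geq 1$. Substituting back in $m^2 = kA$ and simplifying yields $jA = m$, which contradicts $A \geq m+1$. Hence $m^2/A \notin \mathbb{Z}$ for every pair $0 < h < m$, so the strict inequalities above do determine a unique integer $e_0 = \lfloor m^2/A\rfloor - 1$.

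Finally, to see $e_0 \geq 0$, note $e_0 \geq 0$ is equivalent to $\lfloor m^2/A\rfloor \geq 1$, i.e.\ $m^2 \geq A = m^2 - mh + 1$, i.e.\ $mh \geq 1$, which holds. This gives the formula in \eqref{eq:e0-formula} and completes the proof. The only conceptually nontrivial step is the non-integrality of $m^2/A$; the rest is algebraic manipulation.
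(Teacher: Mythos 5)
Your proof is correct and follows essentially the same route as the paper: reduce the inequality, via a sign analysis of the denominator, to the double inequality $(e_0+1)A < m^2 < (e_0+2)A$, rule out $m^2/A\in\Z$, and read off $e_0=\lfloor m^2/A\rfloor-1\ge 0$. The only (immaterial) difference is the non-integrality step: the paper uses $\gcd(m^2,\,m^2-mh+1)=\gcd(m^2,\,mh-1)=1$, whereas you argue mod $m$ together with the bound $A\ge m+1$; both are valid and, like the paper's argument, yours does not require $h$ and $m$ to be coprime.
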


\begin{remark} The integers  $h < m$ are not necessarily coprime in Lemma~\ref{lem:unique-e0}, although that is an assumption for our applications. 
\end{remark}

\begin{proof}
Let $B:=m^{2}-mh+1$, which is positive since $0 < h < m$.  
Rewrite Inequality~\eqref{eq:ineq-original} as
\begin{equation}\label{eq:new}
\frac{-B}{\,m^{2}-(e_{0}+1)B\,}< -1.
\end{equation}
Since the numerator is negative, the left-hand side can be less than $-1$ only if the denominator is positive; hence, we necessarily have
\begin{equation}\label{eq:den-pos}
(e_{0}+1)B<m^{2}.
\end{equation}
Multiplying the Inequality~\eqref{eq:new} by its positive denominator gives
\[
-B<-\bigl(m^{2}-(e_{0}+1)B\bigr),
\]
which is equivalent to 
\[
(e_{0}+2)B>m^{2}.
\]
Together with Inequality~\eqref{eq:den-pos} this shows that Inequality~\eqref{eq:ineq-original} is
equivalent to the strict double inequality
\begin{equation}\label{eq:double}
(e_{0}+1)B<m^{2}<(e_{0}+2)B.
\end{equation}
Dividing by $B>0$ gives
\begin{equation}\label{eq:unit-interval}
e_{0}+1<\frac{m^{2}}{B}<e_{0}+2.
\end{equation}

Next we claim that $\frac{m^{2}}{B}\notin\mathbb Z$, which implies that there exists a unique integer $e_0 \geq 0$, namely,    
\[
e_0= \left\lfloor \frac{m^{2}}{B}\right\rfloor -1 \ge 0
\] 
satisfying the Inequality~\eqref{eq:ineq-original}. To prove the last claim,  note that $B=m(m-h)+1\ge 2$, hence $1<\frac{m^{2}}{B}<m^{2}$.
Moreover,
\[
\gcd(m^{2},B)=\gcd(m^{2},m^{2}-mh+1)=\gcd(m^{2},mh-1)=1,
\]
since $\gcd(m,mh-1)=1$. 
Using the fact that  $2 \le B < m^2$, we see that $B$ does not divide $m^{2}$, so $\frac{m^{2}}{B}\notin\mathbb Z$.
\end{proof}

\begin{lemma}\label{lem:conversion}
For any coprime integers $0 < h < m$, we have 
$$
-D(2m^2-mh+1, m^2-mh+1)=Y\left(e_0; \dfrac{1}{2}, \dfrac{m^2+(e_0+1)(-m^2+mh-1)}{m^2-mh+1}, \dfrac{1}{2}\right),
$$ where $e_0$ is uniquely defined as in  Lemma~\ref{lem:unique-e0}. 
\end{lemma}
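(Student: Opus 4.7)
The plan is to invoke directly the conversion between Seifert fibered notation and dihedral-type notation that was recorded in the proof of Lemma~\ref{dihedralC}, where it was noted that for $e_0 \geq 0$ and coprime integers $0 < q < p$, one has $-Y(e_0;\tfrac{1}{2}, \tfrac{q}{p}, \tfrac{1}{2}) = D(\widetilde p, \widetilde q)$ with $\widetilde p = (e_0+2)p + q$ and $\widetilde q = p$, where $D(\widetilde p, \widetilde q)$ carries its canonical singularity-link orientation. The whole lemma will follow from applying this identity to the correct data and performing a short algebraic cancellation.

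I would apply this with $p := m^2 - mh + 1$ and $q := m^2 + (e_0+1)(-m^2 + mh - 1) = m^2 - (e_0+1)p$. Before doing so I need to check that $(p,q)$ meets the hypotheses of the conversion. The condition $0 < q < p$ (that is, $q/p \in (0,1)$, so that the middle Seifert invariant is in normalized form) is equivalent to $(e_0+1)p < m^2 < (e_0+2)p$, which is exactly the double inequality~\eqref{eq:double} that is guaranteed by Lemma~\ref{lem:unique-e0}. For coprimality of $p$ and $q$, I observe that $q \equiv m^2 \pmod p$, and $\gcd(m^2 - mh + 1,\, m^2) = \gcd(mh - 1,\, m^2) = 1$ since $\gcd(mh-1,\, m) = 1$; hence $\gcd(p,q) = 1$ as needed.

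The remainder of the argument is a one-line substitution:
\[
\widetilde p \;=\; (e_0+2)p + q \;=\; (e_0+2)p + m^2 - (e_0+1)p \;=\; p + m^2 \;=\; 2m^2 - mh + 1,
\]
while $\widetilde q = p = m^2 - mh + 1$. Plugging these into the Seifert-to-dihedral conversion gives $-Y(e_0; \tfrac{1}{2}, \tfrac{q}{p}, \tfrac{1}{2}) = D(2m^2-mh+1,\, m^2-mh+1)$, which is the claimed identity. The main obstacle here is essentially nonexistent: once the conversion formula from Lemma~\ref{dihedralC} is in hand, the argument is a routine cancellation, and the only substantive check — compatibility of the chosen $e_0$ with the normalization $q/p \in (0,1)$ — is precisely the content of the preceding Lemma~\ref{lem:unique-e0}.
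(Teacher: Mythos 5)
Your proof is correct, and it reaches the identity by a slightly different route than the paper. You start on the Seifert side and invoke the closed-form conversion $-Y\left(e_0;\frac{1}{2},\frac{q}{p},\frac{1}{2}\right)=D\bigl((e_0+2)p+q,\,p\bigr)$ recorded parenthetically in the proof of Lemma~\ref{dihedralC}, then substitute $p=m^2-mh+1$, $q=m^2-(e_0+1)p$ and cancel. The paper instead starts on the dihedral side: with $B=m^2-mh+1$ it uses the double inequality of Lemma~\ref{lem:unique-e0} to expand $\frac{B+m^2}{B}=[e_0+3,b_1,\dots,b_n]$, reads off $D(m^2+B,B)=Y\left(-e_0-3;\frac{1}{2},\frac{1}{[b_1,\dots,b_n]},\frac{1}{2}\right)$, reverses orientation, and simplifies the middle Seifert invariant. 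These are the two directions of the same dihedral/Seifert dictionary, so the content is the same; your version trades the explicit continued-fraction manipulation for the hypothesis checks you carry out — that $0<q<p$ is exactly Inequality~\eqref{eq:double}, and that $\gcd(p,q)=\gcd(p,m^2)=1$ — the coprimality point being something the paper handles only implicitly (it appears inside the proof of Lemma~\ref{lem:unique-e0}). Both arguments are complete; yours is a touch more streamlined algebraically, while the paper's makes the normalized Seifert form of $-D$ appear directly from the continued fraction.
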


\begin{proof}
Using the notation from the previous proof we have
$$
D(2m^2-mh+1, m^2-mh+1)=D(m^2+B, B)
$$ 
where $B=m^{2}-mh+1$. By Lemma~\ref{lem:unique-e0}, there is a unique $e_0$ satisfying Inequality~\eqref{eq:unit-interval}. It follows that 
\begin{equation} \label{eq:s}
1+\dfrac{m^2}{B}=
\dfrac{B+m^2}{B}=[e_0+3, b_1, \ldots, b_n],
\end{equation} 
for some $b_i\geq 2$, and hence 
$$
D(m^2+B, B)=Y\left(-e_0-3; \dfrac{1}{2}, \dfrac{1}{[b_1, \ldots, b_n]}, \dfrac{1}{2}\right).
$$ 
By reversing the orientation, we get 
 $$
 -D(m^2+B, B)=Y\left(e_0; \dfrac{1}{2}, 1-\dfrac{1}{[b_1, \ldots, b_n]}, \dfrac{1}{2}\right).
 $$ 
 Setting
 $$s=1-\dfrac{1}{[b_1, \ldots, b_n]},$$ 
 we calculate using Equation~\eqref{eq:s} that 
 $$s=1-\left(e_0+2-\dfrac{m^2}{B}\right)=-e_0-1+\dfrac{m^2}{B}=\dfrac{m^2+(e_0+1)(-B)}{B}$$ 
as claimed in the lemma.
\end{proof}

\appendix

\section{Proof of Lemma 5.4.}\label{app}

In this appendix we give a proof of Lemma~\ref{lem: inverse} which recall calculates the inverse matrix $Q^{-1}_{{\bf a^1}. {\bf a^2}}$. 

 \begin{proof} [Proof of Lemma~\ref{lem: inverse}]  
We have $\widetilde{p}/\widetilde{q}=[a^1_1, a^1_2, \ldots, a^1_{n_1}]$,  $p/q=[a^2_0, a^2_1, \ldots, a^2_{n_2}]$, and $\widetilde{p}/(\widetilde{p}-\widetilde{q})=[a^3_1, a^3_2, \ldots, a^3_{n_3}]$, with all continued fraction coefficients $a^j_i$ being greater than or equal to $2$, by definition. Note that we also have, $\widetilde{p}/(\widetilde{q})^*=[a^1_{n_1}, a^1_{n_2}, \ldots, a^1_1]$, where $(\widetilde{q})^*$ is the inverse of $\widetilde{q}$ mod $\widetilde{p}$,    and $(p-q)/q=[a^2_0-1, a^2_1, \ldots, a^2_{n_2}]$ (see Remark~\ref{rem: specialcase}, when $a^2_0=2$). 

We observe the fact that ${\bf v}^B={\bf v}^{\widetilde{B}}$, which immediately follows from the definitions of the matrices $B$ and $\widetilde{B}$.  By Lemma~\ref{lem: firstrow},  we know that the last column of $A^{-1}$ is  $-(1/\widetilde{p})\;  {\bf u}^A$,   the first column of  $(\widetilde{B})^{-1}$ is  $-(1/(p-q)) \; {\bf v}^B$ (and hence the first row of 
$(\widetilde{B})^{-1}$ is given by $-(1/(p-q)) \; ({\bf v}^B)^T$) and  the first column of  $C^{-1}$ is  $-(1/\widetilde{p})\;  {\bf v}^C$.

The proof of the lemma will be achieved by showing that the product of the matrix $Q_{{\bf a^1}, {\bf a^2}}$, depicted in Figure~\ref{fig: matrix}, and the matrix depicted in Figure~\ref{fig: inversematrix} is equal to the identity matrix. Towards that goal, we first define an auxiliary matrix $\widetilde{Q}_{{\bf a^1}, {\bf a^2}}$ by deleting $A^{-1}$ and $C^{-1}$ from the matrix depicted in Figure~\ref{fig: inversematrix}, and calculate the product of $Q_{{\bf a^1}, {\bf a^2}}$ and  $\widetilde{Q}_{{\bf a^1}, {\bf a^2}}$. 

Let $\widetilde{Q}_A$ be the submatrix of $\widetilde{Q}_{{\bf a^1}, {\bf a^2}}$ obtained by juxtaposition of the blocks $G, D$, and $E$, let $\widetilde{Q}_{\widetilde{B}}$ be the submatrix of  $\widetilde{Q}_{{\bf a^1}, {\bf a^2}}$  obtained by juxtaposition of the blocks  $D^T, (\widetilde{B})^{-1}$ and $F$ and let $\widetilde{Q}_C$ be the submatrix of  $\widetilde{Q}_{{\bf a^1}, {\bf a^2}}$ obtained by juxtaposition of the blocks  $E^T, F^T$, and $H$.

By Remark~\ref{rem: row}, we know that each column of $\widetilde{Q}_A$ is a multiple of ${\bf u}^A$ (because  the definition of $G, D, E$ involves ${\bf u}^A$), that each column of $\widetilde{Q}_C$ is a multiple of ${\bf v}^C$ (because  the definition of $E^T, F^T, H$  involves ${\bf v}^C$) and that the columns of $\widetilde{Q}_{\widetilde{B}}$ belonging to the blocks $D^T$ and  $F$  is a multiple of ${\bf v}^B$ (because the definition of $D^T$ and  $F$  involves ${\bf v}^B$). 
Note that the first column of the block $(\widetilde{B})^{-1}$ is also a multiple of ${\bf v}^B$.   

Now we calculate the dot product of each row of $Q_{{\bf a^1}, {\bf a^2}}$ with each column of  $\widetilde{Q}_{{\bf a^1}, {\bf a^2}}$ case by case below. Note that for each dot product there are at most four terms to calculate. So, the calculations are not very difficult but tedious. 

For $1 \leq i \leq n_1-1$, and $1 \leq j \leq n_1+n_2+n_3+1$, the dot product of the $i$th row of $Q_{{\bf a^1}, {\bf a^2}}$  and the $j$th  column of $\widetilde{Q}_{{\bf a^1}, {\bf a^2}}$ is the same as the dot product of the $i$th row of $A$ and the  $j$th  column of $\widetilde{Q}_A$. Hence this is the dot product of the $i$th row of $A$ and a multiple of the last column of $A^{-1}$ since the last column of $A^{-1}$ is a multiple of ${\bf u}^A$ and each column of $\widetilde{Q}_A$ is a multiple of ${\bf u}^A$. Therefore this dot product is zero, because we are indeed taking the dot product of any of the first $n_1-1$ rows of $A$ with a multiple of the last column of $A^{-1}$.  

We want to show  that for $1 \leq j \leq n_1+n_2+n_3+1$, the dot product of the $n_1$th row of $Q_{{\bf a^1}, {\bf a^2}}$ (the last row of $\widetilde{Q}_A$) and the $j$th  column of $\widetilde{Q}_{{\bf a^1}, {\bf a^2}}$ is zero. To see this,  we first claim that the $(n_1+1)$st row of  $\widetilde{Q}_{{\bf a^1}, {\bf a^2}}$ (the first row of $\widetilde{Q}_{\widetilde{B}}$) is $(\det A=\widetilde{p})$ multiple of the first row of $\widetilde{Q}_{{\bf a^1}, {\bf a^2}}$. To prove the claim we observe that the first row of $(\widetilde{B})^{-1}$ is $\frac{-1}{p-q} ({\bf v}^B)^T=\frac{-1}{p-q} ({\bf v}^{\widetilde{B}})^T$ and the first entry of ${\bf v}^B$ is $q$. Then the  claim follows by simply comparing the first rows of  $D^T, (\widetilde{B})^{-1}, F$   with the first rows of $G, D, E,$ respectively as follows. The first row of $$D^T = -\dfrac{1}{\widetilde{p} (p-q)}   {\bf v}^B ({\bf u}^A)^T \; \mbox{is given by} \;    -\dfrac{q}{\widetilde{p} (p-q)} ({\bf u}^A)^T$$ and the first row of $$G= -\dfrac{q}{(\widetilde{p})^2(p-q)}  {\bf u}^A ( {\bf u}^A)^T \; \mbox{is given by} \;   -\dfrac{q}{(\widetilde{p})^2(p-q)} ( {\bf u}^A)^T.$$ As pointed out above, the first row of 
$$(\widetilde{B})^{-1}  \; \mbox{is given by} \; \frac{-1}{p-q} ({\bf v}^B)^T$$ and the first row of $$D = -\dfrac{1}{\widetilde{p} (p-q)}  {\bf u}^A ({\bf v}^B)^T   \; \mbox{is given by} \;    -\dfrac{1}{\widetilde{p} (p-q)} ({\bf v}^B)^T.$$ Similarly,  the first row of $$E = -\dfrac{q}{(\widetilde{p})^2(p-q)}   {\bf u}^A  ({\bf v}^C)^T \; \mbox{is given by} \;    -\dfrac{q}{(\widetilde{p})^2(p-q)} ({\bf v}^C)^T$$ and the first row of $$F=  -\dfrac{1}{\widetilde{p} (p-q)}   {\bf v}^B ({\bf v}^C)^T \; \mbox{is given by} \;   -\dfrac{q}{\widetilde{p} (p-q)} ({\bf v}^C)^T,$$ which finishes the proof of our claim. Note that the $n_1$th row of $Q_{{\bf a^1}, {\bf a^2}}$  can be seen as the  juxtaposition of the last row of $A$ and the row vector $[1, 0, \dots, 0] \in \mathbb{R}^{n_2+n_3+1}$. So, in the dot product at hand we only have to consider the first $(n_1+1)$st entries  in the $j$th column of $\widetilde{Q}_{{\bf a^1}, {\bf a^2}}$. Our analysis above therefore implies that for $1 \leq j \leq n_1+n_2+n_3+1$, the dot product of the $n_1$th row of $Q_{{\bf a^1}, {\bf a^2}}$ and the $j$th column of $\widetilde{Q}_{{\bf a^1}, {\bf a^2}}$ is equal to a multiple of $$(\mbox{last row of}\; A) \cdot {\bf u}^A + \det A$$ but ${\bf u}^A$ is $-\det A$ times the last column of $A^{-1}$, and therefore the dot product is a multiple of 
$$-\det A (\mbox{last row of}\; A) \cdot (\mbox{last column of}\; A^{-1} )+ \det A=0$$
 in this case as well.  

Now we turn our attention to the last $n_3-1$  rows of $Q_{{\bf a^1}, {\bf a^2}}$. For $$n_1+n_2+3 \leq i \leq n_1+n_2+n_3+1\; \mbox{and}\; 1 \leq j \leq n_1+n_2+n_3+1,$$ the dot product of the $i$th row of $Q_{{\bf a^1}, {\bf a^2}}$ and $j$th  column of $\widetilde{Q}_{{\bf a^1}, {\bf a^2}}$ is the same as the dot product of the $(i-n_1-n_2-1)$st  row of $C$ and the  $j$th  column of $\widetilde{Q}_C$. Hence this is the dot product of the $(i-n_1-n_2-1)$st row of $C$ and a multiple of the first column of $C^{-1}$ since the first column of $C^{-1}$ is a multiple of ${\bf v}^C$ and  each column of  $\widetilde{Q}_C$ is a multiple of ${\bf v}^C$.  Therefore this dot product is zero,  because we are indeed taking the dot product of any of the last  $n_3-1$ rows of $C$ with a multiple of the first column of $C^{-1}$. 

For  $1 \leq j \leq n_1+n_2+n_3+1$, the dot product of the $(n_1+n_2+2)$nd row of $Q_{{\bf a^1}, {\bf a^2}}$ and the $j$th  column of $\widetilde{Q}_{{\bf a^1}, {\bf a^2}}$ is zero because of the fact that the $(n_1+1)$st row of  $\widetilde{Q}_{{\bf a^1}, {\bf a^2}}$ (the first row of $\widetilde{Q}_{\widetilde{B}}$) is $(\det A =\widetilde{p}=\det C)$ multiple of the first row of $\widetilde{Q}_{{\bf a^1}, {\bf a^2}}$, as we showed above.   Hence, the dot product at hand is equal to a multiple of $$(\mbox{first row of}\; C) \cdot {\bf v}^C + \det C$$ but ${\bf v}^C$ is $-\det C$ times the first column of $C^{-1}$, and therefore the dot product is zero in this case as well. 

Now we turn our attention to the middle rows of  $Q_{{\bf a^1}, {\bf a^2}}$. For $n_1+2 \leq i \leq n_1+n_2+1$ and ($1 \leq j \leq n_1+1$ and $n_1+n_2+2 \leq j \leq n_1+n_2+n_3+1$) the dot product of the $i$th row of $Q_{{\bf a^1}, {\bf a^2}}$ and the $j$th  column of $\widetilde{Q}_{{\bf a^1}, {\bf a^2}}$ is the same as the dot product of the $(i-n_1)$th row of $B$ and the  $j$th  column of  $\widetilde{Q}_{\widetilde{B}}$. Hence this is the dot product of the $(i-n_1)$th row of $B$ and a multiple of the first column of $(\widetilde{B})^{-1}$ since the first column of $(\widetilde{B})^{-1}$ is a multiple of ${\bf v}^{\widetilde{B}}={\bf v}^B$ and every column except the ones enumerated from $n_1+2$ to $n_1+n_2+1$ of  $\widetilde{Q}_{\widetilde{B}}$ is a multiple of ${\bf v}^B$. Therefore this dot product is zero, because we are taking the dot product of any of the last $n_2$ rows of $B$ (same as the last $n_2$ rows of  $\widetilde{B}$) with a multiple of the first column of $(\widetilde{B})^{-1}$. 

For $n_1+2 \leq i \leq n_1+n_2+1$ and $n_1+2 \leq j \leq n_1+n_2+1$,  the dot product of the $i$th row of $Q_{{\bf a^1}, {\bf a^2}}$  and the $j$th  column of $\widetilde{Q}_{{\bf a^1}, {\bf a^2}}$ gives a $n_2 \times n_2$ identity matrix because these dot products are the same as the dot products of last $n_2$  rows of $\widetilde{B}$ and the last $n_2$ columns of  $(\widetilde{B})^{-1}$. 

In order to move forward with our analysis, we prove a general fact that will be used below. Since each column of $\widetilde{Q}_C$ is a multiple of ${\bf v}^C$, and the first entry of ${\bf v}^C$ is $(\widetilde{p}-\widetilde{q})$, while its last entry is $1$, we see that the first row of $\widetilde{Q}_C$ is $(\widetilde{p}-\widetilde{q})$ multiple of its last row. Recall that we showed above that the first  row of  $\widetilde{Q}_{\widetilde{B}}$ is $\widetilde{p}$ multiple of the first row of $\widetilde{Q}_A$. Since each column of $\widetilde{Q}_A$ is a multiple of ${\bf u}^A$, and the first entry of ${\bf u}^A$ is $1$, while its last entry $\widetilde{q}$, we see that the last row of $\widetilde{Q}_A$ is $\widetilde{q}$ multiple of its first row. By simply comparing the definitions of the block matrices involved, we also observe that the first row of $\widetilde{Q}_A$ is the same as the last row $\widetilde{Q}_C$. Putting all these observations together, we conclude that 
\begin{equation}\label{eq: abcrows}
 \mbox{last row of}\;  \widetilde{Q}_A +  \mbox{first row of}\;  \widetilde{Q}_C = \mbox{first row of}\;  \widetilde{Q}_{\widetilde{B}} 
\end{equation}
which is a key result for the rest of our proof. 

Next we want to see that the dot product of the $(n_1+1)$st row of $Q_{{\bf a^1}, {\bf a^2}}$ and the  $(n_1+1)$st  column of $\widetilde{Q}_{{\bf a^1}, {\bf a^2}}$ is $1$. The equality~(\ref{eq: abcrows}) can be rephrased as 
\begin{equation}\label{eq: abcrows2}
(n_1)\mbox{th row} + (n_1+n_2+2)\mbox{nd row} = (n_1+1)\mbox{th row} 
\end{equation}
for the matrix $Q_{{\bf a^1}, {\bf a^2}}$. Note that  in the  $(n_1+1)$st  row of $Q_{{\bf a^1}, {\bf a^2}}$, the nonzero terms appear on the $n_1$th,  $(n_1+1)$st , $(n_1+2)$nd and $(n_1+n_2+2)$nd entries as $1,-a_0^2,1,1$. Because of the equality~(\ref{eq: abcrows2}),   the dot product at hand is exactly the dot product of the first row of $\widetilde{B}$ (which is the vector $[-(a_0^2-1) \; 1 \; 0 \cdots 0] \in \mathbb{R}^{n_2+1}$ and the first column of $(\widetilde{B})^{-1}$, which is equal to $1$. 

What is left to consider is the dot product of the  $(n_1+1)$st  row of $Q_{{\bf a^1}, {\bf a^2}}$ and the  $j$th column of  $\widetilde{Q}_{{\bf a^1}, {\bf a^2}}$ for  $j \neq n_1+1$. If we take $1 \leq j \leq n_1$, the dot product of the $(n_1+1)$st row of $Q_{{\bf a^1}, {\bf a^2}}$  and the $j$th column of $\widetilde{Q}_{{\bf a^1}, {\bf a^2}}$ is not zero. The computation here is very similar to the case $j=n_1+1$ we discussed in the paragraph above. In fact the dot product here is equal to some multiples (for $1 \leq j \leq n_1$) of $1$ (that we  computed in the previous paragraph), and as a matter of fact  these dot products are exactly given by the row vector $(1/\widetilde{p})\; ({\bf u}^A)^T$. Therefore,  when we compute the dot product  of the $(n_1+1)$st  row of $Q_{{\bf a^1}, {\bf a^2}}$  and 
the $j$th column of claimed $Q_{{\bf a^1}, {\bf a^2}}^{-1}$ (where we have to take into account $A^{-1}$ at the top left block) we just have to add $-(1/\widetilde{p})\; ({\bf u}^A)^T$ (the last row of $A^{-1}$) and hence the dot product will be zero.  

Similarly, if we take $n_1+n_2+2 \leq j \leq n_1+n_2+n_3+1$, the dot product of the $(n_1+1)$st row of $Q_{{\bf a^1}, {\bf a^2}}$  and the $j$th column of $\widetilde{Q}_{{\bf a^1}, {\bf a^2}}$ is not zero and in fact equal to some multiples of $1$ we computed above. These multiples are exactly given by the row vector $(1/\widetilde{p})\; ({\bf v}^C)^T$. Hence ,  for $n_1+n_2+2 \leq j \leq n_1+n_2+n_3+1$, when we compute the dot product  of the $(n_1+1)$st row of $Q_{{\bf a^1}, {\bf a^2}}$  and the $j$th column of claimed $Q_{{\bf a^1}, {\bf a^2}}^{-1}$ (where we have to take into account $C^{-1}$ at the bottom right block) we just have to add $-(1/\widetilde{p})\; ({\bf v}^C)^T$ (the first row of $C^{-1}$) and hence the dot product will be zero.  

Finally, we can see that, for  $n_1+2 \leq j \leq n_1+n_2+1$, the dot product of the $(n_1+1)$st row of $Q_{{\bf a^1}, {\bf a^2}}$  and the $j$th column of $\widetilde{Q}_{{\bf a^1}, {\bf a^2}}$ 
 is zero. The key point is that these are some multiples of  the dot products of the first row of $\widetilde{B}$ with the last $n_2$ columns of $(\widetilde{B})^{-1}$. Here we again use  the equality~(\ref{eq: abcrows2}). 

As a consequence of our case by case analysis  above,  we can explicitly describe the product $Q_{{\bf a^1}, {\bf a^2}}$ and $\widetilde{Q}_{{\bf a^1}, {\bf a^2}}$ as follows. It is a square matrix of size $n_1+n_2+n_3+1$ such that the first $n_1$ and last $n_3$ rows are zero. The middle rows can be described as the juxtaposition of    three block matrices of sizes $(n_2+1) \times n_1$,  $(n_2+1) \times (n_2+1)$, $(n_2+1) \times n_3$, respectively.  The first row of the first block is $(1/\widetilde{p})\; ({\bf u}^A)^T$ and all other rows are zero. The second block is the $(n_2+1) \times (n_2+1)$ identity matrix and the first row of the third block is 
$(1/\widetilde{p})\; ({\bf v}^C)^T$ and all other rows are zero.

Recall that $\widetilde{Q}_{{\bf a^1}, {\bf a^2}}$ is obtained by deleting $A^{-1}$ and $C^{-1}$ from the matrix depicted in Figure~\ref{fig: inversematrix}. Therefore, based on the previous paragraph and the fact that  $$AA^{-1}=I\;  \mbox{and} \; CC^{-1}=I,$$ we see that the product of the matrix $Q_{{\bf a^1}, {\bf a^2}}$ depicted in Figure~\ref{fig: matrix} and the matrix depicted in Figure~\ref{fig: inversematrix} is equal to the identity matrix. This is because the nonzero row  $(1/\widetilde{p})\; ({\bf u}^A)^T$ in the previous paragraph will be cancelled out by the last row of $A^{-1}$, which is equal to $-(1/\widetilde{p})\; ({\bf u}^A)^T$ and similarly, the nonzero row  $(1/\widetilde{p})\; ({\bf v}^C)^T$ in the previous paragraph will be cancelled out by the first row of $C^{-1}$, which is equal to $-(1/\widetilde{p})\; ({\bf v}^C)^T$. We would like to emphasize that we have also used the fact that $\widetilde{B}(\widetilde{B})^{-1}=I$, while calculating the product of $Q_{{\bf a^1}, {\bf a^2}}$ and $\widetilde{Q}_{{\bf a^1}, {\bf a^2}}$. 
\end{proof} 

\bibliography{references}

\def\cprime{$'$}
\begin{thebibliography}{10}

\bibitem{AcetoGollaLarsonLecuona2020pre}
Paolo Aceto, Marco Golla, Kyle Larson, and Ana~G. Lecuona.
\newblock Surgeries on torus knots, rational balls, and cabling, 2020.

\bibitem{BakerBuckLecuona16}
Kenneth~L. Baker, Dorothy Buck, and Ana~G. Lecuona.
\newblock Some knots in {$S^1\times S^2$} with lens space surgeries.
\newblock {\em Comm. Anal. Geom.}, 24(3):431--470, 2016.

\bibitem{BhupalStipsicz11}
Mohan Bhupal and Andr\'{a}s~I. Stipsicz.
\newblock Weighted homogeneous singularities and rational homology disk
  smoothings.
\newblock {\em Amer. J. Math.}, 133(5):1259--1297, 2011.

\bibitem{BoileauRudolph1995}
Michel Boileau and Lee Rudolph.
\newblock N\oe uds non concordants $\grave{a}$ un $\mathbb{C}$-bord.
\newblock {\em Vietnam J. Math.}, 23:12--28, 1995.

\bibitem{CaubelNemethiPopescu-Pampu2006}
Cl\'{e}ment Caubel, Andr\'{a}s N\'{e}methi, and Patrick Popescu-Pampu.
\newblock Milnor open books and {M}ilnor fillable contact 3-manifolds.
\newblock {\em Topology}, 45(3):673--689, 2006.

\bibitem{ChoePark2021}
Dong~Heon Choe and Kyungbae Park.
\newblock Spherical 3-manifolds bounding rational homology balls.
\newblock {\em Michigan Math. J.}, 70(2):227--261, 2021.

\bibitem{ChristianLi23}
Austin Christian and Youlin Li.
\newblock Some applications of {M}enke's {JSJ} decomposition for symplectic
  fillings.
\newblock {\em Trans. Amer. Math. Soc.}, 376(7):4569--4604, 2023.

\bibitem{ChristianMenke19pre}
Austin Christian and Michael Menke.
\newblock Splitting symplectic fillings.
\newblock arXiv 1909.00420, 2019.

\bibitem{DingGeiges09}
Fan Ding and Hansj{\"o}rg Geiges.
\newblock Handle moves in contact surgery diagrams.
\newblock {\em J. Topol.}, 2(1):105--122, 2009.

\bibitem{DingGeigesStipsicz04}
Fan Ding, Hansj{\"o}rg Geiges, and Andr{\'a}s~I. Stipsicz.
\newblock Surgery diagrams for contact 3-manifolds.
\newblock {\em Turkish J. Math.}, 28(1):41--74, 2004.

\bibitem{EtnyreHonda01b}
John~B. Etnyre and Ko~Honda.
\newblock Knots and contact geometry. {I}. {T}orus knots and the figure eight
  knot.
\newblock {\em J. Symplectic Geom.}, 1(1):63--120, 2001.

\bibitem{EtnyreHonda03}
John~B. Etnyre and Ko~Honda.
\newblock On connected sums and {L}egendrian knots.
\newblock {\em Adv. Math.}, 179(1):59--74, 2003.

\bibitem{EtnyreOzbagciTosun2025}
John~B. Etnyre, Burak Ozbagci, and B\"ulent Tosun.
\newblock Symplectic rational homology ball fillings of {S}eifert fibered
  spaces.
\newblock {\em Adv. Math.}, 482:Paper No. 110572, 59, 2025.

\bibitem{EtnyreRoy21}
John~B. Etnyre and Agniva Roy.
\newblock Symplectic fillings and cobordisms of lens spaces.
\newblock {\em Trans. Amer. Math. Soc.}, 374(12):8813--8867, 2021.

\bibitem{GhigginiLiscaStipsicz06}
Paolo Ghiggini, Paolo Lisca, and Andr{\'a}s~I. Stipsicz.
\newblock Classification of tight contact structures on small {S}eifert
  3-manifolds with {$e\sb 0\geq 0$}.
\newblock {\em Proc. Amer. Math. Soc.}, 134(3):909--916 (electronic), 2006.

\bibitem{GhigginiLiscaStipsicz07}
Paolo Ghiggini, Paolo Lisca, and Andr{\'a}s~I. Stipsicz.
\newblock Tight contact structures on some small {S}eifert fibered 3-manifolds.
\newblock {\em Amer. J. Math.}, 129(5):1403--1447, 2007.

\bibitem{Gompf1995}
Robert~E. Gompf.
\newblock A new construction of symplectic manifolds.
\newblock {\em Ann. of Math. (2)}, 142(3):527--595, 1995.

\bibitem{Gompf98}
Robert~E. Gompf.
\newblock Handlebody construction of {S}tein surfaces.
\newblock {\em Ann. of Math. (2)}, 148(2):619--693, 1998.

\bibitem{KronheimerMrowka93}
P.~B. Kronheimer and T.~S. Mrowka.
\newblock Gauge theory for embedded surfaces. {I}.
\newblock {\em Topology}, 32(4):773--826, 1993.

\bibitem{Lecuona2019}
Ana~G. Lecuona.
\newblock Complementary legs and rational balls.
\newblock {\em Michigan Math. J.}, 68(3):637--649, 2019.

\bibitem{Lerman01}
Eugene Lerman.
\newblock Contact cuts.
\newblock {\em Israel J. Math.}, 124:77--92, 2001.

\bibitem{Lisca2007}
Paolo Lisca.
\newblock Lens spaces, rational balls and the ribbon conjecture.
\newblock {\em Geom. Topol.}, 11:429--472, 2007.

\bibitem{Lisca08}
Paolo Lisca.
\newblock On symplectic fillings of lens spaces.
\newblock {\em Trans. Amer. Math. Soc.}, 360(2):765--799, 2008.

\bibitem{LiscaMatic97}
Paolo Lisca and Gordana Mati{\'c}.
\newblock Tight contact structures and {S}eiberg-{W}itten invariants.
\newblock {\em Invent. Math.}, 129(3):509--525, 1997.

\bibitem{LiscaStipsicz07}
Paolo Lisca and Andr{\'a}s~I. Stipsicz.
\newblock Ozsv{{\'a}}th-{S}zab{{\'o}} {I}nvariants and {T}ight {C}ontact
  {T}hree-{M}anifolds, {III}.
\newblock {\em J. Sympl. Geom.}, 5(4):357--384, 2007.

\bibitem{Matkovic23}
Irena Matkovi\v{c}.
\newblock Fillability of small {S}eifert fibered spaces.
\newblock {\em Math. Proc. Cambridge Philos. Soc.}, 174(3):585--604, 2023.

\bibitem{OrlikWagreich77}
P.~Orlik and P.~Wagreich.
\newblock Algebraic surfaces with {$k\sp*$}-action.
\newblock {\em Acta Math.}, 138(1-2):43--81, 1977.

\bibitem{Ozbagci-bridge}
Burak Ozbagci and Stepan Orevkov.
\newblock A characterization of quasipositive two-bridge knots.
\newblock {\em Internat. J. Math.}, 35(5):Paper No. 2450015, 12, 2024.

\bibitem{Riemenschneider1974}
Oswald Riemenschneider.
\newblock Deformationen von {Q}uotientensingularit\"aten (nach zyklischen
  {G}ruppen).
\newblock {\em Math. Ann.}, 209:211--248, 1974.

\bibitem{Rudolph93}
Lee Rudolph.
\newblock Quasipositivity as an obstruction to sliceness.
\newblock {\em Bull. Amer. Math. Soc. (N.S.)}, 29(1):51--59, 1993.

\bibitem{StipsiczSzaboWahl08}
Andr\'as~I. Stipsicz, Zolt\'an Szab\'o, and Jonathan Wahl.
\newblock Rational blowdowns and smoothings of surface singularities.
\newblock {\em J. Topol.}, 1(2):477--517, 2008.

\bibitem{Wu04}
Hao Wu.
\newblock {\em Tight contact structures on small {S}eifert spaces}.
\newblock ProQuest LLC, Ann Arbor, MI, 2004.
\newblock Thesis (Ph.D.)--Massachusetts Institute of Technology.

\bibitem{Wu06}
Hao Wu.
\newblock Legendrian vertical circles in small {S}eifert spaces.
\newblock {\em Commun. Contemp. Math.}, 8(2):219--246, 2006.

\end{thebibliography}
\bibliographystyle{plain}
\end{document}